\patchcmd{\subsubsection}{\normalfont}{\bfseries\textnormal}{}{} 
\DeclareMathOperator{\mmod}{mod}
\global\long\def\epsilon{\varepsilon}
\newtheorem{theorem}{Theorem}[section]
\newtheorem{lemma}[theorem]{Lemma}
\newtheorem{claim}[theorem]{Claim}
\newtheorem{corollary}[theorem]{Corollary}
\theoremstyle{definition}
\newtheorem{definition}[theorem]{Definition}
\newtheorem{notation}[theorem]{Notation}
\newtheorem*{conjecture}{Conjecture}
\newcommand{\tref}[1]{Theorem \ref{#1}}
\newcommand{\dref}[1]{Definition \ref{#1}}
\newcommand{\lref}[1]{Lemma \ref{#1}}
\newcommand{\cref}[1]{Corollary \ref{#1}}
\newcommand{\clref}[1]{Claim \ref{#1}}
\begin{document} 

\title[Proof of Sister Beiter cyclotomic coefficient conjecture]{A proof of the corrected Sister Beiter cyclotomic coefficient 
conjecture inspired by Zhao and Zhang}

\author{B. Juran}
\address[B. Juran]{Department of Mathematical Sciences, Universitetsparken 5, 2100 K{\o}benhavn {\O}, Denmark}
\email{bj@math.ku.dk}

\author[P. Moree]{P. Moree}
\address[P. Moree]{Max-Planck-Institut f\"ur Mathematik, Vivatsgasse 7, D-53111 Bonn, Germany}
\email{moree@mpim-bonn.mpg.de}

\author{A. Riekert}
\address[A. Riekert]{Applied Mathematics: Institute for Analysis and Numerics, University of M\"unster, Orleans-Ring 10, D-48149 M\"unster, Germany}
\email{ariekert@uni-muenster.de}

\author{D. Schmitz}
\address[D. Schmitz]{Fachbereich Mathematik, Technische Universit\"at Darmstadt, Schlossgartenstrasse 7, D-64289 Darmstadt, Germany}
\email{david.schmitz@stud.tu-darmstadt.de}

\author{J. V\"ollmecke}
\address[J. V\"ollmecke]{
Rheinische Friedrich-Wilhelms-Universit\"at Bonn, Regina-Pacis-Weg 3, D-53113 Bonn, Germany}
\email{julian.voellmecke@uni-bonn.de}


\maketitle

\begin{abstract}
\noindent The largest 
coefficient (in absolute value) 
of a cyclotomic polynomial $\Phi_n$ is called its height $A(n)$.
In case $p$ is a fixed prime it turns out that as $q$ and $r$ range over all primes satisfying
$p<q<r$, the height $A(pqr)$ assumes a maximum $M(p)$. 
In 1968, Sister Marion Beiter conjectured that 
$M(p)\leq (p+1)/2$. In 2009, this
was disproved for every $p\ge 11$ by Yves Gallot and Pieter Moree. 
They proposed a Corrected Beiter Conjecture, namely $M(p)\leq 
2p/3$. In 2009, Jia Zhao and Xianke Zhang posted on the arXiv what they thought to be a proof of this conjecture. Their work was never accepted for 
publication in a journal. However, in retrospect it turns out to be essentially correct, but rather sketchy at some points. Here we supply 
a lot more details.  
\par The bound 
$M(p)\le 2p/3$ allows us to 
improve some
bounds of Bzd\k{e}ga from 2010 for ternary cyclotomic coefficients.
It also makes it possible to determine
$M(p)$ exactly for three new primes $p$ and study the fine structure of $A(pqr)$ for
them in greater detail.
\end{abstract}
\section{Introduction}
The $n^{th}$ cyclotomic polynomial $\Phi_n(x)$ is defined by

\begin{equation*}
\Phi_n(x)=\prod_{\substack{1 \leq j \leq n\\ (j,n)=1}} \left(x-\mathrm e^{\frac{2 \pi \mathrm i j }{n}}\right)=\sum_{k=0}^{\varphi(n)} a_n(k)x^k,
\end{equation*}
where $\varphi$ denotes Euler's totient function.
This definition implies that
\begin{equation}
\label{factor}
x^n-1=\prod_{d \mid n} \Phi_d(x).
\end{equation}
The cyclotomic polynomials have integer 
coefficients and are irreducible over the rationals. Hence the product 
\eqref{factor} gives a factorization of $x^n-1$ into irreducible polynomials. We define the \textsl{height} $A(n)$ of $\Phi_n$ as
\begin{equation*}
A(n)=\max\{\left| a_n(k) \right|: 0\leq k \leq \varphi(n)\}.
\end{equation*}
As $\Phi_{p^2 n}(x) = \Phi_{pn}(x^p)$ for every prime $p$ and therefore $A(p^2 n) = A(pn)$ as well as $\Phi_{2n}(x) = \Phi_n(-x)$ for $2\nmid n$,  it follows that $A(n)=A(m)$ where $m$ is the largest odd squarefree factor of $n$. Further, $A(n)=1$ whenever $n$ has at most two distinct odd prime factors (see \lref{a_pq}) and so the easiest case where we can expect non-trivial behaviour of $A(n)$ is the so-called \emph{ternary} case $n=pqr$, where $3\le p<q<r$ are primes. The smallest ternary integer is 105 and one has $A(105)=2$. In 1895, Bang \cite{Bang} proved that $A(pqr) \leq p-1$. This implies the existence of
\begin{equation*}
M(p):= \max_{p<q<r\text{ primes}} A(pqr)
\end{equation*}
Noticing that $A(105)=2$, Bang showed that $M(3)=2$.
\par The biggest open problem in the theory of ternary cyclotomic polynomials is to find a formula or efficient algorithm for computing $M(p)$ for every prime $p$. Since the  set $\{(q,r): p<q<r\}$ is infinite, this is non-trivial. Duda \cite{Duda} gave an algorithm for determining $M(p)$. 
Unfortunately it requires the evaluation of coefficients of $\Phi_n$ with $n=O(p^{21}),$ making its running time so bad that we could not use it to even find one new value of $M(p)$.
\par An easier problem is to give an upper bound for $M(p)$. 
In 1968, Sister Marion Beiter \cite{Bei0}
conjectured that $A(pqr)\le (p+1)/2$ and proved her conjecture in
case $q\equiv \pm 1\,({\rm mod~}p)$ or 
$r\equiv \pm 1\,({\rm mod~}p)$ (a result
obtained independently by Bloom \cite{Bloom}). As a corollary one
obtains Bang's result that $M(3)=2$. 
In 1978, Sister Beiter \cite{Bei78} went beyond this and investigated when
$A(3qr)=2.$
In 1971, 
Sister Beiter \cite{Bei} showed that $M(p)\le p-\lfloor \frac{p+1}4\rfloor$.
Bloom \cite{Bloom} showed that $M(5)=3$.
M\"oller \cite{Moe}, building on work by Emma Lehmer \cite{Emma}, proved that $M(p) \geq (p+1)/2$ for all $p$, so Beiter's conjecture would imply that $M(p)={(p+1)}/{2}$. However, Moree and Gallot \cite{GM} disproved the Beiter conjecture for all primes $p \geq 11$. Further they proposed the following weaker conjecture.
\begin{conjecture}[Corrected Beiter Conjecture, 2009] 
We have $M(p) \leq \frac{2}{3}p.$
\end{conjecture}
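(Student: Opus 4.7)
The plan is to follow the strategy of Zhao and Zhang, supplying rigour at each step. The starting point is an explicit formula expressing $a_{pqr}(k)=|A_k|-|B_k|$ as a difference of cardinalities of two finite sets of lattice points. Such formulas can be derived from the Möbius product
\[
\Phi_{pqr}(x)=\frac{(x^{pqr}-1)(x^p-1)(x^q-1)(x^r-1)}{(x-1)(x^{pq}-1)(x^{pr}-1)(x^{qr}-1)}
\]
by expanding geometric series and tracking the contribution to the coefficient of $x^k$; equivalent formulas go back to Lam--Leung and were refined by Bachman and Bzd\k{e}ga. After this step, $A_k$ and $B_k$ will be described as sets of pairs $(i,j)$ in explicit rectangles satisfying a congruence of the shape $iq+jr\equiv k\pmod{p}$.

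Next, I would introduce the modular parameters $\rho,\sigma\in\{1,\ldots,p-1\}$ with $\rho q\equiv 1\pmod p$ and $\sigma r\equiv 1\pmod p$, so that $|A_k|$ and $|B_k|$ become counts of integers in certain intervals, of lengths controlled by $\rho$ and $\sigma$, meeting a prescribed residue class modulo $p$. This reduces the analytic content to a purely combinatorial question about the distribution of arithmetic progressions in intervals of length at most $p$. The trivial estimate on such a count already yields Bang's bound $A(pqr)\leq p-1$, while a naive pairing argument gives Beiter's failed bound $(p+1)/2$; one therefore needs a sharper accounting.

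The crux is to show that $\bigl||A_k|-|B_k|\bigr|\le 2p/3$. The underlying mechanism is a three-way decomposition: partition the relevant range of indices into three arcs whose lengths sum to $p$ (essentially $\rho$, $\sigma$, and a residual piece depending on $\rho+\sigma\bmod p$) and prove that at most two of these arcs can simultaneously push $|A_k|-|B_k|$ away from $0$. If all three arcs contributed coherently, the resulting compatibility relations between $\rho$, $\sigma$, $k$ and the primes $q,r$ would collapse under $p<q<r$ (or the primality of $p$) to a contradiction. Since any two of the three arcs have total length at most $2p/3$, the bound follows upon maximising over $k$ and admissible $p<q<r$.

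The main obstacle, and the point where Zhao and Zhang give only a sketch, is the case analysis needed to prove this non-coherence statement. The cases split according to whether $\rho+\sigma<p$ or $\rho+\sigma\ge p$, whether $k$ lies in a \emph{long} or \emph{short} block of the jump sequence of $\Phi_{pqr}$, and whether the residues $\rho,\sigma$ fall in specific subintervals of $[1,p-1]$. Each subcase requires an explicit verification of the disjointness of certain intervals modulo $p$, and it is here that the bulk of the new work lies. Once every subcase has been dispatched, the bound $M(p)\le 2p/3$ follows immediately.
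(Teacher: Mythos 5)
Your proposal outlines a strategy broadly in the spirit of Zhao and Zhang, but there is a genuine gap at the crux, and the preliminary reduction you describe is not the one the argument actually needs.

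The unjustified step is the final one: you assert that ``any two of the three arcs have total length at most $2p/3$'' and let the bound fall out of that. But if three arcs of lengths summing to $p$ each pairwise sum to at most $2p/3$, then each arc has length exactly $p/3$ --- a degenerate situation, not the generic one. So this cannot be the mechanism, and nothing in your sketch explains where a bound of $2p/3$ would actually come from. What the paper proves instead is structurally quite different: after writing $a_{pqr}(i)$ in the Zhao--Zhang form $\sum_m a_{pq}(m)\chi_i(m)$, one partitions $\{0,\dots,p-1\}$ into \emph{special}, \emph{plain}, and \emph{null} residues according to membership in four ``C-sets,'' shows that $A(pqr)>2p/3$ forces $|S|-|N|>p/3$, and then derives a contradiction by constructing an injection (built from the shift $f(v)\equiv v-r_p^*\pmod p$ and the reflection $g(v)=2v_0+r_p^*-v$) that maps almost all of $S$ either into $N$ or into a window of size $r_p^*<p/3$. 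The case split is on the location of $v_0$, the largest low special integer, and the bound emerges from the size of that window, not from pairwise arc lengths.

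You have also skipped a reduction the whole argument hinges on: before any counting, one must use Kaplan's theorem and the Bachman--Moree swap lemma to replace $(q,r)$ by primes normalized so that $1\le p-q_p^*\le r_p^*<p-r_p^*\le q_p^*\le p-1$, and then invoke Bzd\k{e}ga's bound to conclude $r_p^*<p/3$. Without that normalization your parameters $\rho,\sigma$ are unconstrained and no interval-disjointness case analysis of the kind you gesture at will close. You correctly locate the difficulty (``the bulk of the new work'' is in the case analysis), but the framework you propose for carrying it out --- three arcs, at most two coherent --- does not support the estimate and would have to be replaced wholesale by the special/plain/null bookkeeping and the injection.
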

In 2019, Luca et al.~\cite{ternary}, using techniques from analytic number theory, made some partial progress.
\begin{theorem}[Luca et al.~\cite{ternary}]
\label{cor2}
The relative density of ternary integers $pqr$ for which $A(pqr)\le 2p/3$ is at least $0.925$.
\end{theorem}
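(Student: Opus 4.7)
My plan is to exploit the fact that for a fixed prime $p$, the height $A(pqr)$ depends on the auxiliary primes $q,r$ only through their residues modulo $p$. This is a consequence of the classical coefficient formulas for ternary cyclotomic polynomials (of Lam--Leung or Bachman type), which express $a_{pqr}(k)$ as a function of $k$, $p$, and $q\bmod p$, $r\bmod p$ alone. Writing $A(pqr)=H_p(a,b)$ with $a\equiv q$, $b\equiv r\pmod{p}$, the question becomes: how big is the ``good'' set
\[
G_p:=\{(a,b)\in(\mathbb{Z}/p\mathbb{Z})^{\times}\times(\mathbb{Z}/p\mathbb{Z})^{\times}:H_p(a,b)\le 2p/3\}?
\]

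The second step, which is the combinatorial heart of the argument, is to show that
\[
\liminf_{p\to\infty}\frac{|G_p|}{(p-1)^2}\ge 0.925.
\]
Beiter's own theorem already places every pair with $a\equiv\pm1$ or $b\equiv\pm1\pmod{p}$ inside $G_p$ (since then $A(pqr)\le(p+1)/2<2p/3$), and further refinements due to Bachman and Bzd\k ega cover additional residue families. The remaining bad set is estimated by bounding, uniformly in $p$, the number of pairs $(a,b)$ whose Euclidean-algorithm data (equivalently, the continued-fraction expansion of $a^{-1}b\bmod p$) produces a large ``jump sum'' of the type that drives $H_p$. This amounts to an elementary counting problem on partial quotients, reinforced by moment estimates for the associated character sums.

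The third step converts the residue-density bound into a density statement for prime triples. Applying the Siegel--Walfisz theorem for small primes $p$ (say $p\le(\log x)^{A}$) and an elementary Brun-type sieve bound for larger $p$, one obtains
\[
\#\{(p,q,r):p<q<r\le x,\ A(pqr)\le 2p/3\}\ge(0.925-o(1))\cdot\#\{(p,q,r):p<q<r\le x\},
\]
which is precisely the claimed relative density.

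The main obstacle is step two: securing \emph{some} positive density of good residue pairs is easy via Beiter-type conditions, but isolating the explicit constant $0.925$ demands a delicate case analysis of how $(a,b)$ governs the Euclidean expansion controlling the coefficient formula, together with sharp averaging over residues. In comparison, step three is fairly routine analytic number theory once the uniform-in-$p$ control of $|G_p|$ has been secured.
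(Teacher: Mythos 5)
The paper does not prove this result; it is quoted verbatim from Luca, Moree, Osburn, Saad Eddin and Sedunova \cite{ternary}, so there is no in-paper argument to compare against. Judged on its own terms, your proposal has a fatal error at the very first step: the height $A(pqr)$ does \emph{not} depend on $q,r$ only through their residues modulo $p$, and no ``Lam--Leung or Bachman type'' coefficient formula expresses $a_{pqr}(k)$ as a function of $k,p$ and $q\bmod p$, $r\bmod p$ alone. Indeed, Lemma~\ref{Kap} in this paper (Kaplan's lemma) gives $A(pqr)=A(pqs)$ only under a congruence $s\equiv\pm r\pmod{pq}$ -- a mod-$pq$ condition -- and Duda's result quoted in \S\ref{sec1.1} establishes that even the maximum $M(p;q)$ depends only on $q\bmod p$ merely for $q>14p^{10}$, precisely because for smaller $q$ the residue does not determine the height. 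So your function $H_p(a,b)$ is not well defined and the set $G_p$ does not exist.

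What \emph{is} a function of $q\bmod p$ and $r\bmod p$ is Bzd\k{e}ga's \emph{upper bound} in \tref{Bzd}, which is expressed solely in terms of $\alpha=\min\{q_p^*,p-q_p^*,r_p^*,p-r_p^*\}$ and the associated $\beta$. The correct skeleton of the argument (and the one used by Luca et al., whose title ``Constrained ternary integers'' reflects exactly this residue-constraint viewpoint) is to count, for each $p$, the proportion of residue pairs $(q_p^*,r_p^*)$ for which the bound $\max\{\min(2\alpha+\beta,p-\beta),\min(p+2\alpha-\beta,\beta)\}$ already falls at or below $2p/3$, and then combine this with equidistribution of $(q\bmod p,r\bmod p)$ over primes (Siegel--Walfisz, as you say in step three). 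Your step two, by contrast, invokes Beiter's theorem for $q$ or $r\equiv\pm1\pmod p$, which covers only an $O(1/p)$ proportion of pairs and cannot possibly yield $0.925$, and then gestures vaguely at continued-fraction and character-sum heuristics that have no visible connection to the actual coefficient formulas. As written, the combinatorial heart of the argument is missing; it needs to be replaced by an explicit and uniform-in-$p$ count of the $(\alpha,\beta)$-region where Bzd\k{e}ga's bound succeeds, which is a concrete piecewise-linear area computation rather than a Euclidean-algorithm analysis.
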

\noindent The Corrected Beiter Conjecture implies of course that $A(pqr)\le 2p/3$ for all ternary integers.
\par The best general upper bound for ternary cyclotomic coefficients is due to Bzd\k{e}ga \cite{Bzdega} (improving on an earlier upper bound due to Bachman \cite{B1}), and will be given in Section 2 (\tref{Bzd}). Using this bound, Zhao and Zhang \cite{ZZ1} proved that $M(7)=4$, thus establishing both the Corrected Beiter Conjecture and Beiter's original conjecture for $p=7$. Up to now, no values of $M(p)$ for $p>7$ are explicitly known. 
\par In 2009, Lawrence \cite{Brian} announced a proof of the Correct Beiter Conjecture for all primes $p>10^6$, but details were 
never published. In the same year, Zhao and Zhang \cite{ZZ2} posted a purported proof of the Corrected Beiter Conjecture on the arXiv. It builds upon and extends the methods they used to show that $M(7)=4$ in 
\cite{ZZ1}, but is rather longer and more involved. Unfortunately, their paper was never published in a 
journal and the conjecture is still regarded as being open. However, on carefully checking the alleged proof of Zhao and Zhang, we noted that the main ideas were correct. 
In this paper we will establish the Corrected Beiter Conjecture. 
\begin{theorem}
\label{cbc}
We have $$M(p) \leq \frac{2}{3}p.$$
\end{theorem}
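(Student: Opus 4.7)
The plan is to follow the strategy of Zhao and Zhang, combining Bzd\k{e}ga's upper bound (\tref{Bzd}) with a refined case analysis on a small ``hard'' region of parameter space. Since the theorem is already known for $p\le 7$ (by Bang, Bloom and Zhao--Zhang), we may restrict attention to primes $p\ge 11$.

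First, I would parametrize ternary triples $(p,q,r)$ by the reduced inverses $\rho\equiv q^{-1}\pmod p$ and $\sigma\equiv r^{-1}\pmod p$, taken in $\{1,\dots,p-1\}$. Classical formulas (in various guises due to Lam--Leung, Kaplan, Bachman and Bzd\k{e}ga) express each coefficient $a_{pqr}(k)$ as a signed count of lattice points in a fundamental parallelogram determined by $\rho$, $\sigma$, $p$, $q$ and $r$. The key structural point is that the quantity $A(pqr)$ depends only on $p$, $\rho$ and $\sigma$, so the whole problem reduces to a finite combinatorial analysis on the square $\{1,\dots,p-1\}^2$.

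Second, Bzd\k{e}ga's bound already yields $A(pqr)\le 2p/3$ for all pairs $(\rho,\sigma)$ outside a small subregion $R\subseteq\{1,\dots,p-1\}^2$ where his inequality is too weak. Using the palindromic symmetry $x^{\varphi(pqr)}\Phi_{pqr}(x^{-1})=\Phi_{pqr}(x)$ together with the interchangeability of $q$ and $r$, one may replace $(\rho,\sigma)$ by $(p-\rho,p-\sigma)$ or $(\sigma,\rho)$, and thereby shrink $R$ to a union of a few substrips described by simple linear inequalities in $\rho$, $\sigma$ and $p$.

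Third, for $(\rho,\sigma)\in R$ I would bound the coefficient directly. Writing $a_{pqr}(k)=N_+(k)-N_-(k)$ as a difference of two nonnegative lattice-point counts supplied by the formulas of step one, the aim is to show that in each substrip either one of $N_{\pm}(k)$ is already bounded by $\lfloor 2p/3\rfloor$ while the other is small, or that enough of the two contributions cancel. This is the step I expect to be the main obstacle: the Zhao--Zhang bookkeeping is very terse at precisely these points, and delicate care is required near the boundaries of the substrips, in particular when $\rho$ or $\sigma$ approaches the critical values $p/3$, $p/2$ or $2p/3$, which is exactly where the Gallot--Moree counterexamples to the original Beiter conjecture concentrate. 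Assembling all of these case-by-case bounds with Bzd\k{e}ga's inequality then gives $A(pqr)\le 2p/3$ uniformly, and hence $M(p)\le 2p/3$.
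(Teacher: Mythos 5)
The central structural claim in your proposal---that $A(pqr)$ depends only on $p$ and the residues $\rho\equiv q^{-1}$, $\sigma\equiv r^{-1}$ modulo $p$, so that the problem reduces to a finite check over $\{1,\dots,p-1\}^2$---is false. By Kaplan's lemma (\lref{Kap}), $A(pqr)$ is controlled by the residue class of $r$ modulo $pq$, not merely modulo $p$, and the paper's own equation \eqref{1modp} shows that even $M(p;q)=\max_r A(pqr)$ varies with $q$ inside a fixed residue class modulo $p$; Duda's stabilization only kicks in once $q>14p^{10}$. So the problem does not collapse to a finite combinatorial analysis on a square, and ``Bzd\k{e}ga plus symmetry outside a small region'' cannot dispatch the bulk of the cases.

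The paper instead normalizes a single triple via the Kaplan and Bachman--Moree swap lemmas (\lref{chain}) so that $1\le p-q_p^*\le r_p^*<p-r_p^*\le q_p^*\le p-1$, uses Bzd\k{e}ga's bound once to conclude $r_p^*<p/3$, and then invokes the Zhao--Zhang identity (\tref{a_pqr}) writing $a_{pqr}(k)=\sum_m a_{pq}(m)\chi_k(m)$. The core of the argument is the induced partition of $\{0,\dots,p-1\}$ into special, plain and null integers, the counting inequality $|S|-|N|>p/3$ (\lref{S-N}), and its refutation via the maps $f$ and $g$ in a three-case analysis on $v_0$. None of this machinery appears in your sketch: ``writing $a_{pqr}(k)=N_+(k)-N_-(k)$ and taking delicate care near the boundaries'' names the difficulty without engaging it. As a plan the proposal is defensible in broad outline (restrict to $p\ge 11$, use Bzd\k{e}ga to clear most cases, exploit symmetries, then do hard work on a remaining region), but its one concrete structural assertion is wrong and the decisive steps are missing.
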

\begin{corollary}
We have $$\lim_{p\rightarrow\infty}\frac{M(p)}{p}=\frac{2}{3}.$$
\end{corollary}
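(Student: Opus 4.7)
The proof must establish both $\limsup_{p\to\infty} M(p)/p \le 2/3$ and $\liminf_{p\to\infty} M(p)/p \ge 2/3$. The upper bound is immediate from \tref{cbc}: dividing the inequality $M(p) \le \tfrac{2}{3}p$ by $p$ and taking the limit gives the desired bound with no further input.

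For the matching lower bound $\liminf_{p\to\infty} M(p)/p \ge 2/3$, the plan is to appeal to an explicit construction of ternary integers $pqr$ whose heights approach $\tfrac{2}{3}p$. Such constructions have been developed in the work of Gallot and Moree \cite{GM}, where the disproof of the original Beiter conjecture is accompanied by families of ternary integers with heights well above $(p+1)/2$. The underlying mechanism is a coefficient formula for $\Phi_{pqr}$ (in the Lam--Leung/Kaplan style) expressing the middle coefficients as signed counts over a two-dimensional region indexed by the inverses of $q$ and $r$ modulo $p$. By selecting these inverses so that the signed count is forced to have absolute value close to $\tfrac{2}{3}p$, and then applying Dirichlet's theorem on primes in arithmetic progressions to realize the chosen residues by actual primes $q,r$ with $p<q<r$, one obtains $A(pqr) \ge \tfrac{2}{3}p - O(1)$ for a sequence of primes $p$ tending to infinity.

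Concretely, the steps I would carry out are: first, for each sufficiently large prime $p$, identify residue classes $a^{*}, b^{*} \pmod p$ for which the Lam--Leung signed count attains absolute value $\tfrac{2}{3}p - O(1)$ at some index $k$; second, use Dirichlet to produce primes $q \equiv a^{*}$ and $r \equiv b^{*} \pmod p$ with $p < q < r$; and third, combine the resulting lower bound $A(pqr) \ge \tfrac{2}{3}p - O(1)$ with the trivial inequality $M(p) \ge A(pqr)$ to deduce $M(p)/p \ge 2/3 - o(1)$.

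The main obstacle I anticipate is not the upper bound, which is handed to us by \tref{cbc}, but rather ensuring that the lower-bound construction genuinely reaches the asymptotic $2p/3$ rather than merely exceeding $(p+1)/2$ by a bounded margin. If the family from \cite{GM} is not already sharp enough, one must run a finer analysis of the geometry of the Lam--Leung counting region over a parametric family of residue classes, pushing the signed count as close to $2p/3$ as the Corrected Beiter bound now permits. Once such a family is in hand the application of Dirichlet is routine.
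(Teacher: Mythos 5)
Your proposal is correct and follows essentially the same route as the paper: the upper bound comes from \tref{cbc}, and the lower bound comes from the Gallot--Moree constructions in \cite{GM}, which (as the paper notes) already yield $M(p) > (2/3-\epsilon)p$ for all sufficiently large $p$, so the sharpness concern you raise does not materialize.
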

\begin{proof}
This follows on combining \tref{cbc} and 
the result of Gallot and Moree \cite{GM} that given any $\epsilon > 0$, the inequality 
$M(p) > \left( {2}/{3} - \epsilon \right)p$ holds for all large enough primes $p$. 
\end{proof}
\begin{corollary}
We have $M(11)=7$, $M(13)=8$ and $M(19)=12$. 
\end{corollary}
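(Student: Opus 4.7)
The bound $M(p)\leq 2p/3$ supplied by Theorem \ref{cbc} immediately gives all three upper bounds we need: since $M(p)$ is a non-negative integer, one reads off $M(11)\leq \lfloor 22/3\rfloor = 7$, $M(13)\leq \lfloor 26/3\rfloor = 8$ and $M(19)\leq \lfloor 38/3\rfloor = 12$. None of these was previously known; the best prior bounds, coming from Sister Beiter's inequality $M(p)\leq p-\lfloor(p+1)/4\rfloor$, yielded $8$, $10$ and $14$ respectively, so the new input for the corollary comes entirely from the upper bound side.

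The content of the corollary therefore lies in matching these three upper bounds by explicit constructions from below. M\"oller's general lower bound $M(p)\geq (p+1)/2$ only yields $6$, $7$ and $10$, so additional work is needed. The plan is to exhibit, for each $p\in\{11,13,19\}$, a specific pair of primes $(q,r)$ with $p<q<r$ realising the target height. Good candidates are suggested by the Gallot--Moree proof that $\liminf_{p\to\infty} M(p)/p \geq 2/3$, which prescribes congruence conditions on $q$ and $r$ modulo $p$ forcing $A(pqr)$ close to $2p/3$. Specialising this recipe to our three primes produces ternary integers small enough that the coefficients of $\Phi_{pqr}$ can be inspected directly, either via the inclusion--exclusion formula arising from \eqref{factor} or via the standard explicit formulas for ternary cyclotomic coefficients.

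The principal obstacle is isolating the correct $(q,r)$ for each $p$: once such a pair is in hand, verifying that $A(pqr)$ attains the claimed value is a routine finite calculation that can be performed by hand or by a short computer program. Historically, these three values of $M(p)$ had resisted determination not because the lower-bound calculation was hard, but because the upper bounds available before Theorem \ref{cbc} were too weak to match the computational evidence. With Theorem \ref{cbc} now in hand this gap closes, and the corollary reduces to verifying three explicit examples.
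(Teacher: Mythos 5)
Your approach is the same as the paper's: upper bounds $7,8,12$ come directly from Theorem \ref{cbc} (taking integer parts of $2p/3$), and the lower bounds must be supplied by explicit examples. The paper discharges the lower-bound side by pointing to Table~1, where the triples $11\cdot 19\cdot 601$, $13\cdot 73\cdot 307$ and $19\cdot 53\cdot 859$ are recorded (with a coefficient of value $7$, $8$, $-12$, respectively, as computed by Gallot and Zhang). Your write-up stops short of naming any actual $(q,r)$; the recipe you describe (specialising the Gallot--Moree construction and verifying by a finite computation) is reasonable and would work, but as it stands it is a plan rather than a completed argument, whereas the paper closes the loop by exhibiting the concrete witnesses. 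Aside from that, your side remarks are accurate: Beiter's bound $p-\lfloor(p+1)/4\rfloor$ yields only $8,10,14$ and M\"oller's $(p+1)/2$ yields only $6,7,10$, so neither suffices.
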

\begin{proof}
That $7,8,$ respectively $12$ are upper bounds 
follows by the theorem, that they are lower bounds
follows from the examples given in Table 1.
\end{proof}
\vfil\eject

\centerline{\bf TABLE 1}
\begin{center}
\begin{tabular}{|c||c|c|c|c|c|}
\hline
$p$ & $M(p)$ & smallest $n$ & smallest $k$ & $a_{pqr}(k)$ & source\\
\hline
\hline
3 & 2 & $3\cdot 5\cdot 7$ & 7 & -2 & Bang \cite{Bang}\\
\hline
5 & 3 & $5\cdot 7\cdot 11$ & 119 & -3 &  Bloom \cite{Bloom}\\
\hline
7 & 4 & $7\cdot 17\cdot 23$ & 875 & 4 & Zhao and Zhang \cite{ZZ1}\\
\hline
11 & 7 & $11\cdot 19\cdot 601$ & 34884 & 7 & this paper\\
\hline
13 & 8 & $13\cdot 73\cdot 307$ & 89647 & 8 & this paper\\
\hline
19 & 12 & $19\cdot 53\cdot 859$ & 318742 & -12 & this paper\\
\hline
\end{tabular}
\end{center}
For every prime $p$ in the table, the smallest
$n=pqr$ is given with $A(n)=M(p)$ (as computed by Yves Gallot). For this value of $n$ the smallest $k$ is given such that $|a_n(k)|=M(p)$ (as computed by Bin Zhang). 
\par The connaisseur of the
cyclotomic polynomial literature might find Table 1
look familiar. Indeed, it already appears in Gallot et al.\,\cite{GMW} (but without the ``smallest $k$'' column).
The latter paper, written shortly after the appearance
of the preprint by Zhao and Zhang \cite{ZZ2}, had optimistically assumed
their work to be correct. Various results in \cite{GMW} are
thus proved taking the Corrected Beiter Conjecture for 
granted. There are
too many of these results to be listed here. In each case it
easily follows from the proofs given whether the conjecture was assumed or not. In any 
case, all of the results in Gallot et al.\,\cite{GMW} can now be
regarded as proved.

We conjecture that there is a sharpening of $M(p)\le 2p/3$ possible in the sense that there exists a function $f(p)$ tending to infinity with $p$ such that $M(p)\le 2p/3-f(p)$ for every prime $p$. 
The growth of the function $f$ must be rather modest
as Cobeli et al.~\cite{CGMZ} showed that 
$M(p)>2p/3-3p^{3/4}\log p$ for all primes $p$ and
$M(p)>2p/3-c\sqrt{p}$ for infinitely many primes 
$p$ for some $c>0$.
In particular we make the following conjecture.
\begin{conjecture}
Given any real number 
$r$ there exist only finitely many
primes $p$ such that $M(p)> 2p/3-r$.
\end{conjecture}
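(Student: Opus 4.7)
The plan is to strengthen \tref{cbc} to a quantitative version $M(p) \le \tfrac{2}{3}p - g(p)$ with $g(p) \to \infty$, however slowly. A natural framework is that, once $q$ and $r$ are sufficiently large compared to $p$, the height $A(pqr)$ depends on $(q,r)$ only through a bounded amount of arithmetic data modulo $p$ (for instance the residues of $q$ and $r$ and their inverses). One therefore expects $M(p)$ to equal the maximum of a finite family of ``local height'' functions $A_p(\alpha,\beta)$ with $(\alpha,\beta) \in ((\mathbb{Z}/p\mathbb{Z})^{*})^{2}$; by Dirichlet essentially every such pair is realised by actual primes, so $M(p)$ is, up to boundedly many exceptional pairs, the maximum of this finite family.

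First I would revisit the Zhao--Zhang style proof of \tref{cbc} and isolate the combinatorial expression that is being bounded by $2p/3$. That bound is obtained from signed counts over cosets of subgroups of $\mathbb{Z}/p\mathbb{Z}$, all of which are integers, and examining when such a count can approach $2p/3$ should reveal a short list of ``extremal patterns'' cut out by explicit congruences among $\alpha$, $\beta$ and $p$. The aim is to upgrade the global bound to a local one of the form
\begin{equation*}
A_p(\alpha,\beta) \;\le\; \frac{2p}{3} \;-\; c\, h_p(\alpha,\beta),
\end{equation*}
where $h_p(\alpha,\beta)$ measures the distance of $(\alpha,\beta)$ from an extremal pattern, for instance as the smallest positive residue modulo $p$ of a specific linear form in $\alpha,\beta,p$ coming out of the proof. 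Bzd\k{e}ga's refinements of Bachman's inequality (\tref{Bzd}) provide the natural template for what such a local bound should look like.

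The main obstacle I expect is the final step: producing $g(p) \to \infty$ requires showing that $\min_{(\alpha,\beta)} h_p(\alpha,\beta) \to \infty$ with $p$. Since every nonzero residue modulo $p$ is attained by infinitely many primes, there is no direct Diophantine obstruction preventing $(\alpha,\beta)$ from being chosen arbitrarily close to an extremal pattern for each individual $p$. Any unconditional growth must therefore come from an internal rigidity of the formula for $a_{pqr}(k)$: one would need to argue that several competing linear forms in $(\alpha,\beta,p)$ cannot all be simultaneously close to zero modulo $p$, so that no single pair $(\alpha,\beta)$ can meet every extremal condition at once. The lower bound $M(p) > \tfrac{2}{3}p - 3p^{3/4}\log p$ of \cite{CGMZ} shows that any valid $g$ must satisfy $g(p) = O(p^{3/4}\log p)$, so very slow growth (even $g(p) \sim \log\log p$) would suffice; but to the best of my knowledge no unbounded $g$ is presently known, and producing one looks genuinely hard.
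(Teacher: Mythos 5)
The statement you were asked to prove is presented in the paper as a \emph{conjecture}, not a theorem; the authors give no proof and indeed remark explicitly that ``It seems that our conjecture cannot be proved using our method of proof of Theorem \ref{cbc}.'' Your proposal is therefore not in competition with a proof in the paper, and your honest conclusion --- that you do not see how to produce an unbounded $g(p)$ and that doing so ``looks genuinely hard'' --- is the correct assessment. Your framing is also consistent with the surrounding context the paper supplies: Duda's result (quoted in the paper around equation \eqref{dominik}) already establishes that for $q > 14p^{10}$ the quantity $M(p;q)$ depends only on $q \bmod p$, so $M(p)$ is indeed a maximum of a finite family indexed by residues, as you surmise; and the lower bound $M(p) > 2p/3 - 3p^{3/4}\log p$ from \cite{CGMZ}, which you cite as an obstruction, is exactly the bound the paper invokes to argue that any such $f(p)$ (your $g(p)$) must grow slowly. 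Where your proposal goes beyond the paper is in sketching a hypothetical ``local bound'' $A_p(\alpha,\beta) \le 2p/3 - c\,h_p(\alpha,\beta)$ and pointing to the need for a rigidity argument showing several linear forms cannot simultaneously vanish modulo $p$; this is a reasonable research heuristic but, as you yourself note, it is not carried to completion, and there is no step here that constitutes a proof. In short: you have correctly recognized an open problem as open, which is the right answer; there is no gap to flag beyond the one you already identify.
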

For example, if $r=1$ we believe that 
the primes $p$ satisfying $M(p)>2p/3-1$ 
are precisely those listed 
in Table 1. 
\par It seems that our conjecture
cannot be proved using our method of 
proof of Theorem \ref{cbc}.
\subsection{On the frequency of $A(pqr)=M(p)$}
\label{sec1.1}
It is natural question how often the maximum $M(p)$ is reached. To 
study this
it is helpful to consider the quantity 
\begin{equation}
\label{Mpq}
M(p;q)=\max\{A(pqr): 2<p<q<r\},
\end{equation}
where $p$ and 
$q$ are fixed and $r$ ranges over the primes $>q.$ 
Gallot et al.\,\cite{GMW} were the first to introduce and study 
$M(p;q)$.
They described a rather efficient finite procedure to compute it.
Duda \cite{Duda}, using 
a rather geometric method, showed that if $q>14p^{10},$ then the value of $M(p;q)$ only depends on the
congruence class $\beta$ of $q$ modulo $p.$
This value we denote by $M_{\beta}(p).$ Thus
\begin{equation}
\label{dominik}
M_{\beta}(p)=M(p;q),{\rm~for~any~}q>14p^{10}{\rm~satisfying~}
q\equiv \beta\,({\rm mod~}p).
\end{equation}
Duda showed further that
$$M_{\beta}(p)=\max\{M(p;q):q>p,~q\equiv \beta\,({\rm mod~}p)\},$$
and established the symmetry $M_{\beta}(p)=M_{p-\beta}(p).$ It follows
that
\begin{equation}
\label{Mbetaset}    
M(p)=\max\{M_1(p),\ldots,M_{\frac{p-1}2}(p)\}.
\end{equation}
This formula in combination 
with \eqref{dominik} yields a finite procedure to determine $M(p),$ but unfortunately it
is very inefficient. The bound $14p^{10}$ is presumably far from optimal, 
but it is certainly at least $\gg p^2.$
This follows from the result of Gallot et al.\,\cite{GMW} that
\begin{equation}
\label{1modp}
M(p;q)=\min\Big\{\frac {q-1}p+1,{\frac{p+1}{2}}\Big\}{\rm~if~}q\equiv 1\,({\rm mod~}p).
\end{equation}
We infer that $M_1(p)=M_{p-1}(p)=\frac {p+1}2$ (showing the 
correctness of the $\beta=1$ column in Table 2).
If $q/p$ is small, then frequently $M(p;q)<M_{\beta}(p)$ and 
\eqref{1modp} gives an example of this. As 
mentioned earlier, Sister Beiter \cite{Bei0} and, independently 
Bloom \cite{Bloom}, already proved that $M(p;q)\le \frac{p+1}2$ if 
$q\equiv 1\,({\rm mod~}p)$.
\par It follows from \eqref{dominik} and Dirichlet's theorem on primes in arithmetic
progression that the set of primes $q\equiv \beta\,({\rm mod~}p)$ for
which $M(p;q)=M_{\beta}(p)$ has natural density $\frac 1{p-1}$.
Since  $M_{\beta}(p)=M_{p-\beta}(p)$  we infer that the set of primes $q$ for which 
$M(p;q)=M(p)$ has a natural density $\delta(p)$ satisfying $\delta(p)\ge\frac 2 {p-1}.$
\begin{conjecture}
Let $2<p\le 19$ be a prime $\ne 17$. Then $M_{\beta}(p)$ is given in
Table 2 (keeping in mind that $M_{p-\beta}(p)=M_{\beta}(p)$).\\

\centerline{{\bf TABLE 2}: Values of $M_{\beta}(p)$}
\begin{center}
\begin{tabular}{|c||c||c|c|c|c|c|c|c|c|c|}
\hline
$p\backslash \beta$ & $M(p)$ & $1$ & $2$ & $3$ & $4$ & $5$ & $6$ & $7$ & $8$ & $9$\\
\hline
\hline
$3$ &  $2$ &{\bf 2} &  &  &  &  & & & &\\
\hline
$5$ &  $3$ &{\bf 3} & {\bf 3} &  &  &  & & & &\\
\hline
$7$ &  $4$ &{\bf 4} & {\bf 4} & {\bf 4} &  &  & & & &\\
\hline
$11$ & $7$  &{\bf 6} & {\bf 6} & {\bf 7} & {\bf 7} & $6$ & & & &\\
\hline
$13$ &  $8$ & {\bf 7} & {\bf 7} & $7$ & {\bf 8} & {\bf 8} & $7$& & &\\
\hline
$19$ &  $12$ & {\bf 10} & {\bf 10}  & $10$  & {\bf 12}  & $11$ & $9$ & {\bf 11} & $11$ & $10$\\
\hline
\end{tabular}
\end{center}
In particular, the set of primes $q$ such
that $M(p;q)=M(p)$ has density $\delta(p)$ as given in the table 
below.
\begin{center}
\begin{tabular}{|c|c|c|c|c|c|c|c|}
\hline
$p$ & $3$ & $5$ & $7$ & $11$ & $13$ & $19$ \\
\hline
$\delta(p)$ & $1$ & $1$ & $1$ & $2/5$ & $1/3$ & $1/9$\\
\hline
\end{tabular}
\end{center}
\end{conjecture}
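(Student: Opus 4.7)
The plan is to establish Table 2 in two independent halves: certify each entry as a lower bound for $M_\beta(p)$ via an explicit witness, and then certify the matching upper bound by a finite verification. By \eqref{dominik}, a lower bound of value $c$ follows from a single triple $(p,q,r)$ with $q \equiv \beta \,({\rm mod~}p)$, $q > 14p^{10}$, and $|a_{pqr}(k)| = c$ for some $k$; such witnesses can be collected by extending the numerical searches that produced Table 1 to every residue class $\beta$, which presumably covers the boldfaced entries directly and the remaining entries with additional effort.

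For the upper bound I would combine Duda's stabilisation result \eqref{dominik} with the Bzd\k{e}ga bound (\tref{Bzd}) and the ceiling $M(p) \le 2p/3$ from \tref{cbc}. Once $q$ and $r$ are both large, the explicit formula behind \tref{Bzd} expresses $A(pqr)$ in terms of finitely many residue data depending only on $\beta$, $r \bmod p$ and $r \bmod q$, so the problem reduces in the stabilised regime to a finite combinatorial optimisation over $O(p^2)$ residue pairs. The ceiling $2p/3$ should allow one to prune this optimisation quickly, since most residue patterns fail to come anywhere near it, and the value conjectured in the table can then be matched case by case.

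The main obstacle is that Duda's stabilisation threshold $14p^{10}$ is far too coarse for direct enumeration already at $p=11$. One is therefore forced to handle a huge small-$q$ regime separately, running for each prime $q$ below the threshold in the appropriate residue class an efficient version of the finite procedure for $M(p;q)$ of Gallot et al.\,\cite{GMW}. I expect this small-$q$ verification to be the genuine bottleneck, both in sheer volume and in the fact that coefficient patterns are less uniform there than in the stabilised regime. The exclusion of $p = 17$ from the conjecture is almost certainly a symptom of exactly this: the computational load grows rapidly in $p$ and has apparently not yet been carried out for $p=17$. A clean unconditional proof would seem to require a substantial sharpening of Duda's exponent $10$; barring that, Table 2 looks like a target for a dedicated computational verification programme rather than a short structural argument, and I do not see how the method used to prove \tref{cbc} could settle it directly.
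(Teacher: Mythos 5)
This statement is labelled a \emph{conjecture} in the paper, and the paper does not prove it: what it proves is Theorem \ref{tabletheorem}, namely that every entry $v$ satisfies $v\le M_\beta(p)\le m(\beta^*)$ and that the boldface entries are exact. Your proposal, read as a plan to prove the full table, has two concrete problems. First, for the lower bounds you insist on witnesses with $q>14p^{10}$ so as to invoke \eqref{dominik}; this is both unnecessary and self-defeating. Duda's other result quoted in Section \ref{sec1.1}, $M_\beta(p)=\max\{M(p;q):q>p,\ q\equiv\beta\,(\mmod p)\}$, means that \emph{any} single $q$ in the class gives a valid lower bound, which is precisely why the small examples of Table 1 and the constructions of \cite{GM,GMW} (valid for all large $q$ in a class) suffice. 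Requiring $q>14\cdot 11^{10}$ would make even the $p=11$ witnesses uncomputable. Second, your upper-bound route misreads \tref{Bzd}: Bzd\k{e}ga's theorem gives an upper bound on $a_{pqr}(i)$ in terms of $\alpha$ and $\beta$, not an exact formula for $A(pqr)$ as a function of residue data, so there is no ``finite combinatorial optimisation over $O(p^2)$ residue pairs'' that computes $M_\beta(p)$ in the stabilised regime. The only unconditional upper bound available in the paper is $M_\beta(p)\le m(\beta^*)$ (\tref{BBimproved}, i.e.\ Bzd\k{e}ga combined with $M(p)\le 2p/3$), and this certifies exactly the boldface entries, where the witness value meets $m(\beta^*)$ or $M(p)$; for the non-boldface entries (e.g.\ $M_5(11)=6$, $M_3(13)=7$) no proof of the upper bound is known, which is why the table remains conjectural.

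Your closing diagnosis — that the remaining entries would require either a sharpening of Duda's exponent or a massive dedicated computation, and that the method of \tref{cbc} does not settle them — is consistent with the paper's own stance. But as written the proposal is not a proof of the statement, nor even of the part of it that the paper does establish; to recover the latter you should drop the $14p^{10}$ condition on witnesses and replace the ``finite verification'' upper bound by the bound $m(\beta^*)$ of \tref{BBimproved}. (On $p=17$: the paper gives no reason for its exclusion, so your explanation is a plausible guess rather than something the text supports.)
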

It follows using the next theorem 
that these conjectural densities are certainly \emph{lower bounds} for $\delta(p)$, a result
that improves Theorem 5 of \cite{GMW} in case $p=13$.
\par We define the auxiliary function
\begin{equation}
    \label{auxiliary}
m(j)=\min\Big\{w(j),\Big\lfloor \frac{2}3 p \Big\rfloor\Big\},\text{~with~}w(j)=
\begin{cases}
\frac {p-1}2+j & {\rm ~if~}j<\frac p4;\\
p-j & {\rm ~if~}\frac p4<j\le\frac {p-1}2;\\
w(p-j) & {\rm~if~}j>\frac {p-1}2.
\end{cases}
\end{equation}
\begin{theorem}
\label{tabletheorem}
Let $v$ be an entry in the $p$-row and $\beta$-column with 
$\beta\le \frac{p-1}2$ in Table {\rm 2}. Let $\beta^*$ denote the inverse 
of $\beta$ in the interval $[1,p-1]$. 
We have $$v\le M_{\beta}(p)\le m(\beta^*),$$
and, moreover, if $v$ is in boldface, then $M_{\beta}(p)=v$, 
\end{theorem}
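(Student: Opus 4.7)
The proof naturally splits into three pieces: the upper bound $M_\beta(p) \leq m(\beta^*)$, the lower bound $v \leq M_\beta(p)$, and the equality $M_\beta(p) = v$ in the boldface case. The last is immediate once the other two are established and meet.

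For the upper bound, write $m(\beta^*) = \min\{w(\beta^*),\lfloor 2p/3\rfloor\}$ and treat the two clauses separately. The cap $\lfloor 2p/3\rfloor$ is immediate from Theorem~\ref{cbc}, since $M_\beta(p) \leq M(p) \leq \lfloor 2p/3\rfloor$. For the residue-sensitive bound $M_\beta(p)\le w(\beta^*)$, the plan is to feed Bzd\k{e}ga's general bound (\tref{Bzd} in Section~2) into the setting where the inverse of $q$ modulo $p$ is fixed to be $\beta^*$. Bzd\k{e}ga's estimate is a piecewise-linear function of the inverses of $q$ and $r$ modulo $p$; maximizing it over the residue class of $r^{-1}$ produces the piecewise definition of $w$: for $\beta^* < p/4$ the optimum sits in the regime giving $(p-1)/2+\beta^*$, while for $p/4<\beta^*\le(p-1)/2$ the optimum drops to $p-\beta^*$. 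The clause $w(j)=w(p-j)$ for $j>(p-1)/2$ encodes Duda's symmetry $M_\beta(p)=M_{p-\beta}(p)$ transported through inversion modulo $p$.

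For the lower bound it suffices to exhibit, for each entry $v$ of Table~2, an explicit pair of primes $(q,r)$ with $q\equiv\beta\pmod p$, $r>q$, and $A(pqr)=v$. The boldface entries in the $M(p)$-column reuse the witnesses already recorded in Table~1; the remaining entries require direct computational searches restricted to the relevant congruence class (of the kind carried out by Gallot and Zhang for the original table). Once the witnesses are in hand, equality $M_\beta(p)=v$ for boldface entries follows by matching with the upper bound.

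The main obstacle will be the piecewise analysis in the second step: Bzd\k{e}ga's bound is a maximum of several affine functions of the two residue variables, and isolating the maximum in $r$ after fixing $q^{-1}\equiv\beta^*$ requires tracking which facet is active as $\beta^*$ crosses the threshold $p/4$. Once the active facet has been identified in each range and $w$ is pinned down, combining with Theorem~\ref{cbc} to obtain the cap in $m$ is routine, and the finite lower-bound verifications are straightforward (though laborious) applications of the existing computational machinery.
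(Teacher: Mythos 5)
Your proposal follows essentially the same route as the paper: the upper bound is obtained by combining the Gallot--Moree--Wilms bound $M_\beta(p)\le w(\beta^*)$ (derived from Bzd\k{e}ga's estimate, Theorem~\ref{BB}) with the main result $M(p)\le \lfloor 2p/3\rfloor$ (giving Theorem~\ref{BBimproved}), and the lower bound is obtained by exhibiting explicit primes $q\equiv\beta\pmod p$ and $r$ with $A(pqr)=v$, matching with the upper bound in the boldface cases. The only cosmetic difference is that the paper outsources both the derivation of $w(\beta^*)$ and the witness constructions to the cited literature (\cite{GM}, \cite{GMW}) rather than redoing them, and notes that a single Table~1 witness only covers the one column $\beta$ determined by that $q$, so the remaining boldface entries indeed require separate witnesses as you acknowledge.
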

The final entries of the rows in Table 2 satisfy
$\frac {p+1}2\le M_\frac {p-1}2(p)\le \min\{\frac {p+3}2,\big\lfloor \frac{2}3 p\big\rfloor\}$, for example.
\par Without the bound $M(p)\le\frac 23p$ at hand only a substantial weaker 
version of Theorem \ref{tabletheorem} can be proven. Thus Theorem \ref{tabletheorem} can be seen
as an application of our main result.
\par As a further application we generalize in Section \ref{bartekbounds} some bounds
of Bzd\k{e}ga (\tref{Bzd}) and show that $M_{\beta}(p)\le m(\beta^*)$ for every
odd prime $p$ and $1\le \beta\le p-1$ (\tref{BBimproved}).
\par There are of course more aspects
to cyclotomic coefficients than discussed in this paper, for a recent survey see 
Sanna \cite{Sanna}.
\section{Preliminaries}
\begin{definition}
Let 
\begin{equation*}
\Phi_n(x)=\sum_{k=0}^{\varphi(n)} a_n(k)x^k,
\end{equation*}
with
$a_n(k)$ the $k^{\text{th}}$ coefficient of the $n^{\text{th}}$ cyclotomic polynomial. For $k < 0$ or $k > \varphi(n)$, we define $a_n(k) = 0$.
\end{definition}
\begin{definition} \label{ov}
For any two distinct primes $p$ and $q$ let $0<\overline{p_q} \leq q-1$ be the unique integer with $\overline{p_q} \equiv p\,(\mmod q)$.
\end{definition}
\begin{definition} \label{*} 
For any two distinct primes $p$ and $q$ let $0<p_q^*\leq q-1$ be the inverse of $p\,(\mmod{q})$, i.e. the unique integer in $[1, q-1]$ with $pp_q^* \equiv 1\,(\mmod q)$. Likewise modulo $p$ we define $q_p^*$ and $r_p^*$.
\end{definition}
\begin{lemma} \label{pqinv}
	We have $pq+1 = pp_q^*+qq_p^*$ for any two distinct primes $p$ and 
	$q$.
\end{lemma}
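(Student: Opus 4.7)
The plan is to use the Chinese Remainder Theorem combined with a crude size bound to pin down the value of $pp_q^*+qq_p^*$ uniquely.

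First, I would compute the quantity $N := pp_q^*+qq_p^*$ modulo $p$ and modulo $q$ separately. By Definition \ref{*}, $pp_q^*\equiv 1\pmod q$ and $qq_p^*\equiv 1\pmod p$. Since $qq_p^*\equiv 0\pmod q$ and $pp_q^*\equiv 0\pmod p$, we get
\begin{equation*}
N\equiv 1\pmod p,\qquad N\equiv 1\pmod q.
\end{equation*}
Because $p$ and $q$ are distinct primes, hence coprime, the Chinese Remainder Theorem yields $N\equiv 1\pmod{pq}$, so $N = 1 + kpq$ for some nonnegative integer $k$.

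Next I would bound $N$. The definitions force $1\le p_q^*\le q-1$ and $1\le q_p^*\le p-1$, so
\begin{equation*}
N = pp_q^*+qq_p^* \le p(q-1)+q(p-1) = 2pq-p-q < 2pq.
\end{equation*}
On the other hand, $N\ge p+q\ge 5>1$. The only value of the form $1+kpq$ strictly between $1$ and $2pq$ is $pq+1$, so $N = pq+1$, which is the claimed identity.

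There is no real obstacle here; the only thing to be careful about is that the bound $N<2pq$ is strict, ruling out the next congruent value $2pq+1$, and that $N>1$, ruling out $k=0$. Both are immediate from the ranges of $p_q^*$ and $q_p^*$ specified in Definition \ref{*}.
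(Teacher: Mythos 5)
Your proof is correct and is essentially identical to the paper's: both compute $pp_q^*+qq_p^*$ modulo $p$ and modulo $q$, invoke the Chinese Remainder Theorem to get congruence to $1$ modulo $pq$, and use the bounds $1<pp_q^*+qq_p^*<2pq$ to conclude. The only difference is that you spell out the size estimates in more detail than the paper does.
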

\begin{proof}
Note that $1 < qq_p^*+pp_q^* < 2pq $ and that both modulo $p$ and modulo $q$ the integer $qq_p^*+pp_q^*$ equals $1$ 
and hence, by the Chinese Remainder Theorem, also $1$ modulo $pq$. 
\end{proof}

\par The following lemma is well-known, 
Lam and Leung \cite{LL} is an easily accessible
reference. It can also be interpreted in terms of numerical
semigroups, see, e.g., Moree \cite{AMS}.
\begin{lemma}\label{a_pq} Let $p<q$ be primes.
The $k^{\text{th}}$ coefficient $a_{pq}(k)$ of $\Phi_{pq}$ satisfies
\begin{equation*} 
a_{pq}(k)=\begin{cases} 1&\text{if $k=up+vq$ for some $u \in [0,\; p_q^*-1]$ and $v \in [0,\; q_p^*-1]$}; \\ -1&\text{if $k=up+vq-pq$ for some $u\in [p_q^*,\; q-1]$ and $v\in [q_p^*,\; p-1]$}; \\ 0&\text{otherwise.} \end{cases}
\end{equation*}
\end{lemma}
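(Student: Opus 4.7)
The plan is to exhibit the claimed expression explicitly and verify it via the product identity
\begin{equation*}
\Phi_{pq}(x)(1-x^p)(1-x^q) = (1-x)(1-x^{pq}),
\end{equation*}
which follows from $\Phi_1(x)=x-1$, $\Phi_p(x)=(x^p-1)/(x-1)$, $\Phi_q(x)=(x^q-1)/(x-1)$, and the factorization \eqref{factor} applied to $n=pq$.

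Let $P(x)$ denote the polynomial on the right-hand side of the lemma (the difference of the two double sums). To compute $P(x)(1-x^p)(1-x^q)$, I would use the rectangular-telescoping identity
\begin{equation*}
(1-x^p)(1-x^q)\sum_{i=0}^{a-1}\sum_{j=0}^{b-1} x^{ip+jq} = (1-x^{ap})(1-x^{bq}),
\end{equation*}
applied with $(a,b)=(p_q^*,q_p^*)$ for the positive part, and, after the reindexing $u = p_q^*+u'$, $v = q_p^*+v'$ (which also subtracts $pq$ from the exponent using $pp_q^*+qq_p^*=pq+1$), with $(a,b)=(q-p_q^*,p-q_p^*)$ for the negative part. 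In both cases \lref{pqinv} lets me rewrite the exponents cleanly; a short calculation collapses the four-plus-four resulting monomials to $1 - x - x^{pq} + x^{pq+1} = (1-x)(1-x^{pq})$, which proves $P(x)=\Phi_{pq}(x)$.

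The second point is that the individual monomials in the two double sums occur with well-defined signs, i.e., the exponents do not collide, so that the coefficient description of $P(x)$ is exactly the one stated. Within a single double sum, an equality $u_1 p + v_1 q = u_2 p + v_2 q$ with $u_i \in [0,q-1]$ and $v_i \in [0,p-1]$ forces $q\mid u_1-u_2$ by coprimality, hence $u_1=u_2$ and then $v_1=v_2$. For a cross-collision one would get $ap+bq=pq$ with $a=u_2-u_1\in[1,q-1]$ and $b=v_2-v_1\in[1,p-1]$; coprimality of $p$ and $q$ now forces $p\mid b$, a contradiction.

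The main obstacle is simply the bookkeeping in the telescoping step: eight exponents must reduce, via \lref{pqinv}, to precisely the four monomials $1,-x,-x^{pq},x^{pq+1}$, and if any one is mishandled the identity fails. Once this is verified, disjointness of exponents yields the lemma immediately.
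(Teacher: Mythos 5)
The paper itself does not prove \lref{a_pq}; it is stated as a well-known result with Lam and Leung \cite{LL} given as a reference, so there is no internal proof to compare against. Your proposal is correct and is essentially the standard verification: the product identity $\Phi_{pq}(x)(1-x^p)(1-x^q)=(1-x)(1-x^{pq})$ follows from \eqref{factor}, the geometric-sum (telescoping) identity is correct, and the reindexing of the negative block via $pp_q^*+qq_p^*=pq+1$ does collapse the eight monomials to $1-x-x^{pq}+x^{pq+1}$, since with $A=pp_q^*$, $B=qq_p^*$ one has $(q-p_q^*)p+1=B$ and $(p-q_p^*)q+1=A$, and $(q-p_q^*)p+(p-q_p^*)q+1=pq$. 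The collision-freeness argument (modulo-$q$ and modulo-$p$ coprimality, respectively) is also correct and is exactly what is needed to pass from the polynomial identity to the stated coefficient formula. This is in the same spirit as the Lam--Leung argument the paper cites, so no gap or genuine divergence to report.
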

Since $u\equiv kp_q^*\,(\mmod q)$ and $v\equiv kq_p^*\,(\mmod p)$, $k$ uniquely determines $u$ and $v$, so we can denote $u=[k]_p,v=[k]_q$ for every $0\leq k\leq pq$. Observe that 
$$[k]_p=[k']_p\iff k\equiv k'\,(\mmod q) \text{~and~}[k]_q=[k']_q\iff k\equiv k'\,(\mmod p).$$ 
A further result we will make use of is due to Nathan Kaplan.
\begin{lemma}[Kaplan \cite{Kap}, 2007]  \label{Kap}
If $p<q<r$, we have
\begin{equation*}
A(pqr)=A(pqs)
\end{equation*}
for every $s>q$ with $s \equiv \pm r\,(\mmod{pq})$.
\end{lemma}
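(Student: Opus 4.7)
The plan is to start from the M\"obius-inversion identity
$$\Phi_{pqr}(x)\,\Phi_{pq}(x)=\Phi_{pq}(x^r),$$
which is a direct consequence of $\Phi_n=\prod_{d\mid n}(x^d-1)^{\mu(n/d)}$ applied to $n=pqr$, using that $r$ is coprime to $pq$. The virtue of this identity is that $r$ enters the right-hand side only as an exponent: the monomials of $\Phi_{pq}(x^r)$ sit at the positions $kr$ with $0\le k\le\varphi(pq)$, carrying the coefficients $a_{pq}(k)\in\{-1,0,1\}$ furnished by \lref{a_pq}.

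The second step is to expand $1/\Phi_{pq}(x)$ as a formal power series. Using the factorisation $\Phi_{pq}(x)=(x^{pq}-1)(x-1)/((x^p-1)(x^q-1))$, a short computation gives
$$\frac{1}{\Phi_{pq}(x)}=\sum_{m\ge 0}\sum_{j=0}^{p-1}\bigl(x^{mpq+j}-x^{mpq+q+j}\bigr),$$
so the coefficient $d_n$ of $x^n$ is $+1$ if $n\bmod pq\in[0,p-1]$, $-1$ if $n\bmod pq\in[q,q+p-1]$, and $0$ otherwise. Writing $d_n=\delta(n\bmod pq)$ and reading off the coefficient of $x^n$ in $\Phi_{pq}(x^r)\cdot\bigl(1/\Phi_{pq}(x)\bigr)$ yields the explicit formula
$$a_{pqr}(n)=\sum_{0\le k\le n/r}a_{pq}(k)\,\delta\!\bigl((n-rk)\bmod pq\bigr),\qquad n\ge 0.$$

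The third step is to use this formula to pair coefficients. The right-hand side depends on $r$ only through the truncation $\lfloor n/r\rfloor$ and the residue $r\bmod pq$. If $s\equiv r\pmod{pq}$, then for any $n,n'\ge 0$ with $n'\equiv n\pmod{pq}$ and $\lfloor n'/s\rfloor=\lfloor n/r\rfloor$ the analogous formula for $a_{pqs}(n')$ contains exactly the same summands, and hence $a_{pqs}(n')=a_{pqr}(n)$. The plan is therefore, for each $n\in[0,\varphi(pqr)]$, to produce an $n'$ with these matching properties, and vice versa, so that the two polynomials realise the same collection of coefficient values and, in particular, the same height. I expect this bookkeeping to be the main obstacle: the natural choice $n'=\lfloor n/r\rfloor\,s+\bigl((n-\lfloor n/r\rfloor r)\bmod pq\bigr)$ can, when $s<pq$, overshoot the interval $[0,s)$ needed for $\lfloor n'/s\rfloor$ to take the correct value, forcing a short case distinction (using $\gcd(r,pq)=\gcd(s,pq)=1$) to show that the offending residue classes either lie outside $[0,\varphi(pqr)]$ or carry $a_{pqr}(n)=0$. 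Consistency of the formula with the fact that $a_{pqs}(n')=0$ for $n'>\varphi(pqs)$ makes this manageable.

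The remaining case $s\equiv -r\pmod{pq}$ is reduced to the previous one via the palindrome relation $a_n(k)=a_n(\varphi(n)-k)$ for $n>1$: reflecting coefficient indices converts a prime $s$ with $s\equiv -r\pmod{pq}$ into the setting $s\equiv r\pmod{pq}$ for the reversed polynomial, and the bijection of the previous paragraph then applies. Combining the two cases yields $A(pqr)=A(pqs)$.
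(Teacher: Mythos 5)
The paper does not prove this lemma; it simply cites Kaplan's 2007 paper, so there is no in-paper argument to compare against. Your opening steps are sound and are in fact the standard starting point: the identity $\Phi_{pqr}(x)\,\Phi_{pq}(x)=\Phi_{pq}(x^r)$, the expansion of $1/\Phi_{pq}(x)$ as a half-periodic $\pm1$ sequence $d_n=\delta(n\bmod pq)$, and the resulting convolution formula $a_{pqr}(n)=\sum_{0\le k\le n/r}a_{pq}(k)\,\delta((n-rk)\bmod pq)$ are all correct.

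The gap lies precisely where you flag it, and the remedy you sketch does not work. Writing $M=\lfloor n/r\rfloor$, an index $n'$ with $\lfloor n'/s\rfloor=M$ and $n'\equiv n\pmod{pq}$ exists if and only if the residue class $n\bmod pq$ has a representative in $[Ms,\,Ms+s-1]$; since $s\equiv r\pmod{pq}$ this is equivalent to $(n\bmod r)\bmod pq<s$. When $s<pq$ this fails on a subset of $[0,\varphi(pqr)]$ cut out purely by congruence conditions modulo $r$ and $pq$, which have nothing to do with the arithmetic of $\Phi_{pqr}$. There is therefore no reason the offending $n$ should lie beyond $\varphi(pqr)$ or carry $a_{pqr}(n)=0$ as you suggest; in general they carry nonzero coefficients and can carry the extremal one. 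Forcing $\lfloor n'/s\rfloor$ up or down by one to land inside $[0,s)$ changes the truncation in the sum, adding or deleting a term that need not vanish. So a term-by-term matching does not go through. Kaplan's actual argument is of a different character: he analyses which pairs (truncation index, residue class mod $pq$) are attained by each of the two polynomials and shows, together with the reflection $a_n(k)=a_n(\varphi(n)-k)$, that the maximum over the attained pairs is the same for $r$ and for $s$. That analysis is the substantive content of the lemma, and your sketch leaves it entirely open.
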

\begin{corollary}
\label{rp-rswap}
If $p<q<r_1$ are primes, there exists a prime $r>q$ such that $A(pqr_1)=A(pqr)$ and $r_p^*=p-(r_1)_p^*$.
\end{corollary}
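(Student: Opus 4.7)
The plan is to produce $r$ by invoking Dirichlet's theorem on primes in arithmetic progressions together with Kaplan's lemma (\lref{Kap}).

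More precisely, I would first observe that since $r_1$ is a prime strictly larger than $q>p$, we have $\gcd(r_1,pq)=1$, hence also $\gcd(-r_1,pq)=1$. Dirichlet's theorem therefore guarantees infinitely many primes in the residue class $-r_1\pmod{pq}$, and in particular one may choose such a prime $r$ with $r>q$. By construction $r\equiv -r_1\pmod{pq}$, so \lref{Kap} (with $s=r$ and the minus sign) yields $A(pqr_1)=A(pqr)$, which is the first of the two required properties.

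For the second property, I would reduce $r\equiv -r_1\pmod{pq}$ modulo $p$ to obtain $r\equiv -r_1\pmod{p}$. Taking multiplicative inverses modulo $p$ gives
\begin{equation*}
r_p^*\equiv -(r_1)_p^*\equiv p-(r_1)_p^*\pmod{p}.
\end{equation*}
Since both $r_p^*$ and $p-(r_1)_p^*$ lie in the interval $[1,p-1]$ (the latter because $(r_1)_p^*\in[1,p-1]$), the congruence forces the equality $r_p^*=p-(r_1)_p^*$ as required.

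There is no real obstacle in this argument: it is a direct combination of Dirichlet's theorem, \lref{Kap}, and the elementary observation that inversion commutes with negation modulo $p$. The only minor points to verify are that $-r_1$ is coprime to $pq$ (so that Dirichlet applies) and that the two representatives in $[1,p-1]$ coincide (so that the congruence upgrades to an equality), both of which are immediate.
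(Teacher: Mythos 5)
Your proposal is correct and follows essentially the same route as the paper: choose a prime $r\equiv -r_1\pmod{pq}$ with $r>q$ via Dirichlet, apply Kaplan's lemma to get $A(pqr_1)=A(pqr)$, and deduce $r_p^*=p-(r_1)_p^*$ from the congruence modulo $p$. The only difference is that you spell out the coprimality check and the upgrade from congruence to equality, which the paper leaves implicit.
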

\begin{proof}
By Dirichlet's theorem we can choose $k$ such that $r=-r_1+kpq > q$ is a prime 
and hence $A(pqr_1)=A(pqr)$ by \lref{Kap}. As $r\equiv -r_1\,(\mmod p)$, we have $r_p^* = p-(r_1)_p^*$.
\end{proof}
This lemma only allows to exchange the largest prime. There is also an
generalization that does not have this restriction.
\begin{lemma}[Bachman and Moree \cite{BM}, 2011]
\label{BM-swap} 
If $r\equiv \pm s\bmod{pq}$  and $r > \max\{p,q\} > s\ge 1$, then
$A(pqs)\le A(pqr)\le A(pqs)+1$.
\end{lemma}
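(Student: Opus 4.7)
The plan is to prove the lemma by a direct coefficient-by-coefficient comparison of $\Phi_{pqr}$ and $\Phi_{pqs}$, using an explicit formula of Kaplan type that expresses each ternary coefficient as a short signed sum of values of $a_{pq}(\cdot)$.

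First I would dispose of the sign by reducing the case $r \equiv -s \pmod{pq}$ to the case $r \equiv s \pmod{pq}$. The self-reciprocity $a_n(k) = a_n(\varphi(n)-k)$ implies that flipping $r$ to $-r$ modulo $pq$ does not alter $A(pqr)$, and combined with \lref{Kap} (applied after choosing, via Dirichlet, a prime congruent to $\pm r \pmod{pq}$ that exceeds $q$) this lets us assume $r \equiv s \pmod{pq}$ without loss of generality.

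In the plus case, I would invoke Kaplan's formula for $a_{pqr}(k)$. Writing $k \equiv k_0 \pmod{pq}$ with $0 \le k_0 < pq$, one obtains an expression of the form $a_{pqr}(k) = a_{pq}(k_0 - c_1 r) - a_{pq}(k_0 - c_2 r)$ in which $c_1,c_2 \in \{0,1\}$ are determined by the membership of certain residues of $k$ modulo $p$ and $q$ in explicit intervals. Because these membership conditions depend on $r$ only through $r \bmod p$ and $r \bmod q$, passing from $r$ to $s \equiv r \pmod{pq}$ preserves the structural form of the formula at a corresponding index $k'$. Since $a_{pq}$ takes values in $\{-1,0,1\}$ by \lref{a_pq}, the expressions for $a_{pqr}(k)$ and $a_{pqs}(k')$ can disagree by at most $1$, which is the core input that produces the gap of size one in the lemma.

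To conclude $A(pqs) \le A(pqr) \le A(pqs)+1$, I would exhibit the pairing $k \leftrightarrow k'$ as a map from $[0,\varphi(pqr)]$ onto $[0,\varphi(pqs)]$ satisfying $|a_{pqr}(k)-a_{pqs}(k')| \le 1$ and, crucially, the one-sided relation that the boundary discrepancy always has the effect of \emph{adding} at most $1$ in absolute value when passing from $s$ up to $r$, never subtracting. The hard part will be precisely this asymmetric control: extending Kaplan's formula from its natural regime $s>q$ down to $s \ge 1$ (including $s=1$, for which $A(pqs)=1$), carefully tracking which terms carry the $\pm 1$ boundary adjustment, and showing that no extremal coefficient of $\Phi_{pqs}$ fails to lift to a coefficient of $\Phi_{pqr}$ of at least the same absolute value.
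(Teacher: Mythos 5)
This lemma is not proved in the paper at all: it is imported verbatim from Bachman and Moree \cite{BM}, and the authors even remark that they only need the lower bound $A(pqs)\le A(pqr)$. So there is no internal proof to compare yours against, and your attempt must be judged on its own. Judged that way, it has a genuine gap at its core. The formula you attribute to Kaplan, $a_{pqr}(k)=a_{pq}(k_0-c_1r)-a_{pq}(k_0-c_2r)$ with $c_1,c_2\in\{0,1\}$, cannot be correct: since $a_{pq}$ takes values in $\{-1,0,1\}$ by Lemma \ref{a_pq}, it would force $A(pqr)\le 2$ for every ternary integer, contradicting already $A(5\cdot 7\cdot 11)=3$. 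Kaplan's actual identity expresses $a_{pqr}(k)$ as a signed sum of $p+q$ terms $f(\rho(i))$, where $f(m)=a_{pq}(m)$ if $rm\le k$ and $f(m)=0$ otherwise, and where only the permutation $\rho$ depends merely on $r\bmod pq$; the truncation condition $rm\le k$ depends on $r$ as an integer. The entire content of the lemma is the comparison of the truncation sets realized by $r$ and by $s$ as $k$ and $k'$ vary: for $s>\max\{p,q\}$ every configuration realized by one modulus is realized by the other (this is Lemma \ref{Kap}), whereas for $s<\max\{p,q\}$ one must show that each configuration realized by $r$ differs from one realized by $s$ in a controlled, one-sided way. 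You explicitly flag this as ``the hard part'' and leave it entirely unaddressed, so what you have is a plan whose only concrete ingredient is a false identity, not a proof.

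Two smaller points. First, your reduction of the case $r\equiv -s\pmod{pq}$ to $r\equiv s\pmod{pq}$ is essentially just Lemma \ref{Kap} (which already carries the $\pm$) combined with Dirichlet; self-reciprocity of $\Phi_{pqr}$ by itself does not let you ``flip $r$ to $-r$'', since $-r$ is not an admissible third modulus. Second, for $s$ as small as $1$, or $s$ composite, the symbol $A(pqs)$ only acquires meaning in the wider setting of inclusion--exclusion polynomials $Q_{\{p,q,s\}}$, which is precisely the framework in which Bachman and Moree work; your sketch stays within genuine cyclotomic polynomials and would need this extension to even state the claim for general $s\ge 1$.
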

The upper bound part we do not need, but we like to point out that it can happen that 
$A(pqr)=A(pqs)+1$.
\begin{corollary}
\label{qrswap}
Let $p<q_1<r_1$ be primes. Then there exist primes $q<r$ such that $A(pqr)\geq A(pq_1r_1)$,
$q_p^*=(r_1)_p^*$ and $r_p^*=(q_1)_p^*$.
\end{corollary}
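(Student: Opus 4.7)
The plan is to realize the desired ``swap'' of the roles of $q_1$ and $r_1$ modulo $p$ by keeping $r_1$ as the middle prime and replacing $q_1$ by a large prime in the same residue class. The tool tailor-made for this is Lemma~\ref{BM-swap} (Bachman--Moree), which allows us to replace the smallest prime $s$ of a ternary triple $(p,q,s)$ by a much larger prime $r\equiv \pm s\pmod{pq}$ while controlling the height from below. Combined with Dirichlet's theorem on primes in arithmetic progressions, this essentially gives the result in one step.

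Concretely, I would set $q:=r_1$. Since $q_1$ is a prime distinct from $p$ and $r_1$, we have $\gcd(q_1,pr_1)=1$, so Dirichlet supplies a prime $r$ with
$$r>r_1\qquad\text{and}\qquad r\equiv q_1\pmod{pr_1}.$$
The hypotheses of Lemma~\ref{BM-swap}, with the roles of its ``$p,q,s,r$'' played by $p,r_1,q_1,r$, are then all satisfied: we have chosen the ``$+$'' sign in $r\equiv \pm q_1\pmod{pr_1}$, and $r>r_1=\max\{p,r_1\}>q_1\ge 1$. The lemma therefore yields
$$A(pq_1r_1)=A(p\,r_1\,q_1)\le A(p\,r_1\,r)=A(pqr).$$

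It remains to check the congruence conditions. From $q=r_1$ we get $q_p^*=(r_1)_p^*$ directly. From $r\equiv q_1\pmod{pr_1}$ we deduce $r\equiv q_1\pmod p$, and hence $r_p^*=(q_1)_p^*$, as required.

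I do not foresee any real obstacle here, since the heavy lifting is done by Lemma~\ref{BM-swap}. The only subtle point is the $\pm$ ambiguity in that lemma: if we were forced into the ``$-$'' class we would obtain $r_p^*=p-(q_1)_p^*$ instead, but since Dirichlet lets us choose either residue class freely we simply take the ``$+$'' one. (Had we been forced into the ``$-$'' class, a subsequent application of Corollary~\ref{rp-rswap} to the triple $(p,r_1,r)$ would correct the sign without changing the height.)
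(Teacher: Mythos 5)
Your proof is correct and follows exactly the same route as the paper: set $q=r_1$, invoke Dirichlet to find a prime $r>r_1$ with $r\equiv q_1\pmod{pr_1}$, and apply Lemma~\ref{BM-swap} to the triple $(p,r_1,q_1)$ to get $A(pq_1r_1)\le A(pr_1r)$. The only cosmetic difference is that you spell out the $\pm$-sign discussion and a fallback via Corollary~\ref{rp-rswap}, which the paper simply avoids by choosing the $+$ class directly.
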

\begin{proof}
By Dirichlet's theorem, we can choose $k\ge 1$ such that
$r=q_1+kpr_1$ is a prime. Note that
$r>\max\{p,r_1\}>q_1\ge 1$. Applying \lref{BM-swap}, we find $A(pq_1r_1)\leq A(prr_1)
=A(pr_1r)$. Thus, we can simply take $q=r_1$.
\end{proof}
This corollary immediately implies the next one.
\begin{corollary}
\label{qrswapineq}
Let $p<q_1<r_1$ be primes. Then there exist primes $q<r$ such that $A(pqr)\geq A(pq_1r_1)$ and $q_p^* \leq r_p^*$.
\end{corollary}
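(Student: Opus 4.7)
The plan is to derive this directly from Corollary \ref{qrswap} by a simple case distinction on whether the pair $(q_1,r_1)$ already enjoys the desired inequality $(q_1)_p^* \le (r_1)_p^*$ or not.

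If $(q_1)_p^* \le (r_1)_p^*$, then we are already done by choosing $q=q_1$ and $r=r_1$: this gives the trivial $A(pqr) = A(pq_1r_1)$ and $q_p^* = (q_1)_p^* \le (r_1)_p^* = r_p^*$.

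Otherwise $(r_1)_p^* < (q_1)_p^*$, and here I would invoke Corollary \ref{qrswap} to produce primes $q<r$ with $A(pqr) \ge A(pq_1r_1)$, $q_p^* = (r_1)_p^*$ and $r_p^* = (q_1)_p^*$. Then the hypothesis of this case translates directly into $q_p^* = (r_1)_p^* < (q_1)_p^* = r_p^*$, which is the inequality required. Since either case is handled, the conclusion follows in all situations.

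There is essentially no obstacle here: Corollary \ref{qrswap} already accomplishes the serious work (namely, using Dirichlet and the Bachman--Moree swap lemma to exchange the roles of $q_1$ and $r_1$ modulo $p$ while not decreasing the height), and the remaining step is only the observation that whenever the original pair fails the inequality, swapping their residues modulo $p$ produces the opposite inequality. This is why the excerpt remarks that the statement follows immediately from Corollary \ref{qrswap}.
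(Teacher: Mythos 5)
Your proof is correct and matches the paper's intent exactly: the paper simply asserts that Corollary \ref{qrswap} ``immediately implies'' the statement, and your case distinction (swap the residues only when the inequality initially fails) is precisely the routine argument left implicit.
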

Applying Corollary 
\ref{qrswap}, followed by Corollary \ref{rp-rswap} and then Corollary \ref{qrswap} again yields the
next corollary.
\begin{corollary}
\label{qp-qswap}
Let $p<q_1<r_1$ be primes. Then there exist primes $q,r$ such that $p<q<r$, $A(pqr)\geq A(pq_1r_1)$ and $r_p^*=(r_1)_p^*$, $q_p^*=p-(q_1)_p^*$.
\end{corollary}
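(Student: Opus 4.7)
The plan is to chain together the three preceding corollaries exactly as suggested by the authors, keeping careful track of how the inverses $q_p^*$ and $r_p^*$ transform at each step and verifying that the ordering $p < q < r$ is preserved throughout.

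I would first apply Corollary \ref{qrswap} to the triple $(p, q_1, r_1)$ to obtain primes $q_2 < r_2$ with $p < q_2$ (this holds since the proof of Corollary \ref{qrswap} sets $q_2 = r_1 > p$), such that $A(pq_2r_2) \geq A(pq_1r_1)$, $(q_2)_p^* = (r_1)_p^*$ and $(r_2)_p^* = (q_1)_p^*$. The intermediate triple has effectively swapped the roles of the inverses of $q_1$ and $r_1$ modulo $p$.

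Next, I would apply Corollary \ref{rp-rswap} to $(p, q_2, r_2)$ to obtain a prime $r_3 > q_2$ with $A(pq_2r_3) = A(pq_2r_2)$ and $(r_3)_p^* = p - (r_2)_p^* = p - (q_1)_p^*$. This step fixes the desired form of $q_p^*$ in the final triple by converting $(q_1)_p^*$ into $p - (q_1)_p^*$, while leaving $(q_2)_p^* = (r_1)_p^*$ untouched. Finally, I would apply Corollary \ref{qrswap} once more to $(p, q_2, r_3)$, obtaining primes $q < r$ with $p < q < r$, $A(pqr) \geq A(pq_2r_3) \geq A(pq_1r_1)$, $q_p^* = (r_3)_p^* = p - (q_1)_p^*$ and $r_p^* = (q_2)_p^* = (r_1)_p^*$, which is exactly the conclusion claimed.

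There is no real obstacle here; the argument is purely bookkeeping. The only minor point to verify carefully is that the hypotheses of each corollary are met at each stage, in particular that the primes produced by Corollary \ref{qrswap} satisfy $p < q_2$ so that Corollary \ref{rp-rswap} applies, and that the prime $r_3$ produced still satisfies $r_3 > q_2 > p$ so that the second application of Corollary \ref{qrswap} is legal. Both conditions are automatic from the statements of the cited results.
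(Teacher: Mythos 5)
Your proof is correct and follows exactly the same route as the paper, which simply states that the result follows by applying Corollary \ref{qrswap}, then Corollary \ref{rp-rswap}, then Corollary \ref{qrswap} again. You have merely spelled out the bookkeeping that the paper leaves implicit, verifying at each stage that the hypotheses are satisfied.
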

Using three of these four corollaries  we can prove
the following lemma which is crucial for
our proof.
\begin{lemma}
\label{ineq}
\label{chain}
Let $p<q_1<r_1$ be primes. Then there exist primes $q,r$ such that $A(pqr)\geq A(pq_1r_1)$ and 
$1 \leq p-q_p^* \leq r_p^*<p-r_p^* \leq q_p^* \leq p-1$.
\end{lemma}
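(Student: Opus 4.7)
The plan is to apply Corollaries \ref{rp-rswap}, \ref{qrswap}, \ref{qrswapineq} and \ref{qp-qswap} in succession to transform the pair $((q_1)_p^*, (r_1)_p^*)$ into one satisfying the required chain. The first observation is that, since $q_p^*, r_p^* \in [1, p-1]$ is automatic, the chain is equivalent to the two sufficient conditions
\[
r_p^* \leq (p-1)/2 \quad\text{and}\quad q_p^* + r_p^* \geq p,
\]
as both $p - q_p^* \leq r_p^*$ and $p - r_p^* \leq q_p^*$ rephrase the second condition, and together they force $q_p^* \geq (p+1)/2 > r_p^*$.

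My first move is to invoke Corollary \ref{qrswapineq} to reduce to the case $q_p^* \leq r_p^*$. Writing $v = q_p^*$ and $u = r_p^*$ with $v \leq u$, I split into three cases. If $u \leq (p-1)/2$ (hence $v \leq u \leq (p-1)/2$), apply Corollary \ref{qp-qswap} to flip $q_p^*$, producing the state $(p - v, u)$: then $r_p^* = u \leq (p-1)/2$ and $q_p^* + r_p^* = (p - v) + u \geq p$ since $u \geq v$. If $v \geq (p+1)/2$ (hence $u \geq v \geq (p+1)/2$), apply Corollary \ref{qrswap} to swap and then Corollary \ref{rp-rswap} to flip the new $r_p^*$, producing $(u, p - v)$: then $r_p^* = p - v \leq (p-1)/2$ and $q_p^* + r_p^* = u + (p - v) \geq p$ since $u \geq v$. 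Otherwise $v \leq (p-1)/2$ and $u \geq (p+1)/2$, and I sub-split on the sign of $u + v - p$. When $u + v \geq p$, a single application of Corollary \ref{qrswap} gives $(u, v)$, which works. When $u + v < p$, apply Corollary \ref{qp-qswap} followed by Corollary \ref{rp-rswap} to reach $(p - v, p - u)$: here $p - v \geq (p+1)/2$, $p - u \leq (p-1)/2$, and the sum is $2p - u - v > p$.

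Each step preserves $A(\cdot) \geq A(pq_1 r_1)$ by the cited corollary, so at the end of every branch the resulting primes $q, r$ satisfy both sufficient conditions and hence the full chain. The main obstacle is simply the bookkeeping of the case distinction; conceptually one is just checking that the orbit of $(q_p^*, r_p^*)$ under the three generators (flip $q_p^*$, flip $r_p^*$, swap) always meets the target region $\{(x, y) \in [1, p-1]^2 : x > p/2 > y,\ x + y \geq p\}$, with Corollary \ref{qrswapineq} serving as a convenient WLOG reduction to get things started.
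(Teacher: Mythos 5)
Your proof is correct and follows the same basic strategy as the paper: repeatedly apply the swap/flip corollaries to move the pair $(q_p^*,r_p^*)$ into the target region. The paper's own proof is a terser three-step conditional algorithm (swap if $\min\{q_p^*,p-q_p^*\}>\min\{r_p^*,p-r_p^*\}$, then flip $q_p^*$ if $q_p^*\le(p-1)/2$, then flip $r_p^*$ if $r_p^*>(p-1)/2$) whose correctness is left to the reader; your explicit case bookkeeping, together with the observation that the chain is equivalent to the two conditions $r_p^*\le(p-1)/2$ and $q_p^*+r_p^*\ge p$, is a more carefully verified rendering of the same idea.
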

\begin{proof}
It is easy to check that
the following algorithm will produce the
required chain of inequalities.\\
{\tt Algorithm}\\
Check if $\min\{q_p^*,p-q_p^*\}\le 
 \min\{r_p^*,p-r_p^*\}$.\\ 
\indent  If NO, swap $r_p^*$ with
 $q_p^*$ using Corollary \ref{qrswap}.\\ 
 Check if $q_p^*>(p-1)/2$.\\
 \indent  If NO, swap $q_p^*$ with $p-q_p^*$ using Corollary \ref{qp-qswap}.\\
Check if $r_p*\le (p-1)/2$.\\
 \indent If NO, swap $r_p^*$ and 
 $p-r_p^*$ using Corollary \ref{rp-rswap}.
\end{proof}

The following bound plays an important role in the proof of
\tref{cor2} given in Luca et al.\,\cite{ternary} and the proof of 
\tref{cbc} given in this paper. 
\begin{theorem}[Bzd\k{e}ga \cite{Bzdega}, 2009] \label{Bzd} 
Let $p<q<r$ be primes. Let
\begin{equation*}
\alpha = \min\{q_p^*,~p-q_p^*,~r_p^*,~p-r_p^*\}
\end{equation*}
and let $0<\beta\leq p-1$ the unique integer with $\alpha\beta qr \equiv 
1\,(\mmod{p})$. 
We have
\begin{equation*}
a_{pqr}(i) \leq \min\{2\alpha + \beta,~p-\beta\}\text{~and~}-a_{pqr}(i) \leq \min\{p+2\alpha - \beta,~\beta\}.
\end{equation*}
\end{theorem}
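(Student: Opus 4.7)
The plan is to combine the symmetry reductions just established with a signed lattice-point representation of $a_{pqr}(k)$. By \lref{ineq}, every ternary triple $(p,q_1,r_1)$ can be replaced by one $(p,q,r)$ with $A(pqr)\ge A(pq_1r_1)$ in which $1\le p-q_p^*\le r_p^*<p-r_p^*\le q_p^*\le p-1$; in particular $\alpha=q_p^*$. Hence it suffices to establish the bounds under this normalisation, which eliminates the four-way $\min$ in the definition of $\alpha$.

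The starting point is the factorisation
$$\Phi_{pqr}(x)\;=\;\frac{(x^{pqr}-1)(x^p-1)(x^q-1)(x^r-1)}{(x-1)(x^{pq}-1)(x^{pr}-1)(x^{qr}-1)}.$$
Expanding the three factors $1/(x^{pq}-1)$, $1/(x^{pr}-1)$, $1/(x^{qr}-1)$ as formal power series and collecting the coefficient of $x^k$ produces a representation
$$a_{pqr}(k)=\#\mathcal{P}(k)-\#\mathcal{N}(k),$$
where $\mathcal{P}(k)$ and $\mathcal{N}(k)$ are explicit sets of triples $(u,v,w)\in\mathbb{Z}_{\ge 0}^3$ solving $up+vq+wr\equiv k\pmod{pqr}$ inside a box whose side lengths are dictated by the inverses $p_q^*,q_p^*,p_r^*,r_p^*$ from \dref{*}. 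This is essentially the ``three--line'' formula used by Bachman \cite{B1}.

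The crux is to sort these lattice points by their residue modulo $p$. Because $\alpha\beta qr\equiv 1\pmod{p}$, the residue classes split into $\beta$ that contribute positively and $p-\beta$ that contribute negatively (or vice versa, depending on the sign pattern), while each contributing class yields a block of consecutive points of length at most $\alpha$. Tracking which classes give full blocks and which give truncated ones, one obtains $\#\mathcal{P}(k)\le\alpha+\beta$ and $\#\mathcal{N}(k)\le\alpha$, whence $a_{pqr}(k)\le 2\alpha+\beta$. Running the same count from the opposite end of the coefficient range—using the palindromic symmetry $a_{pqr}(k)=a_{pqr}(\varphi(pqr)-k)$ and the substitution $\beta\mapsto p-\beta$—gives $-a_{pqr}(k)\le 2\alpha+(p-\beta)$, while exchanging the roles of $\mathcal{P}$ and $\mathcal{N}$ (i.e.\ counting by the other block-direction) produces the complementary bounds $a_{pqr}(k)\le p-\beta$ and $-a_{pqr}(k)\le\beta$.

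The main obstacle will be the last step: verifying that the alternating contributions from the three geometric series really do cancel in the clean $\alpha$-blocks-and-$\beta$-classes pattern, rather than leaving a messier residual that would weaken the bounds. The strict inequalities in \lref{ineq} and the reduction $\alpha=q_p^*$ are precisely what keep this case analysis tractable, and I would organise the proof to exploit them at the first possible point.
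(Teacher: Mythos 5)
The paper does not prove this theorem at all --- it quotes it verbatim as a known result from Bzd\k{e}ga's 2010 paper (reference \cite{Bzdega}) and uses it as a black box, so there is no internal proof to compare your attempt against. Your proposal must therefore be judged on its own merits, and it has several serious problems.

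First, the reduction via \lref{ineq} does not work. Under the normalisation $1\le p-q_p^*\le r_p^*<p-r_p^*\le q_p^*\le p-1$, the minimum in the definition of $\alpha$ is attained at $p-q_p^*$, not $q_p^*$; the paper itself states ``$\alpha=p-q_p^*$'' when using \eqref{inequ}. More fundamentally, while the multiset $\{q_p^*,p-q_p^*,r_p^*,p-r_p^*\}$, and hence $\alpha$, is preserved under the swaps in Corollaries \ref{rp-rswap}--\ref{qp-qswap}, the value $\beta$ is \emph{not}: the swap $q_p^*\mapsto p-q_p^*$ sends $qr\mapsto -qr\pmod p$ and hence $\beta\mapsto p-\beta$, which interchanges the claimed upper bound on $a_{pqr}(i)$ with the claimed upper bound on $-a_{pqr}(i)$. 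Since \lref{ineq} only guarantees $A(pqr)\ge A(pq_1r_1)$, it gives you no control over the sign at which the height is attained, so proving the bounds for the normalised triple does not recover the stated, sign-specific bounds for the original triple. The lemma is designed in the paper precisely to \emph{feed into} \tref{Bzd} (to fix $\alpha=p-q_p^*$, $\beta=p-r_p^*$), not to be used in a proof of it.

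Second, the combinatorial core is incoherent as written. If $a_{pqr}(k)=\#\mathcal{P}(k)-\#\mathcal{N}(k)$ with $\#\mathcal{P}(k)\le\alpha+\beta$ and $\#\mathcal{N}(k)\le\alpha$, the only conclusion is $a_{pqr}(k)\le\alpha+\beta$ (using $\#\mathcal{N}\ge0$), not $2\alpha+\beta$; the arithmetic you wrote does not produce Bzd\k{e}ga's bound. The appeal to palindromic symmetry is also misapplied: $a_{pqr}(k)=a_{pqr}(\varphi(pqr)-k)$ preserves the coefficient rather than negating it, so it cannot yield a bound on $-a_{pqr}(k)$. Finally, the ``three-line formula'' step (expanding $(x^{pq}-1)^{-1}(x^{pr}-1)^{-1}(x^{qr}-1)^{-1}$ into a signed lattice count confined to a box determined by the modular inverses) is asserted, not derived, and the crucial claim that the residue classes mod $p$ split into $\beta$ positive and $p-\beta$ negative blocks of length $\le\alpha$ is exactly the hard part of Bzd\k{e}ga's argument; the proposal gives no justification for it. Bzd\k{e}ga's actual proof builds on a considerably sharper representation due to Bachman \cite{B1} and involves a careful bookkeeping argument that is nothing like as short as the sketch suggests.
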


\par We define a function $\chi$ that will play a major role in our
proof.
\begin{definition} \label{chi}
Let $k$ and $m$ be integers, $p,q$ and $r$  primes satisfying $q>p$. We 
put
\begin{equation*} 
\chi_k(m)=\begin{cases} 1&\text{if there exists $s\in \mathbb Z$ with 
$mr+q< k+1+spq \le mr+q+p$;} \\ -1&\text{if there exists $s\in \mathbb Z$ with 
$mr< k+1+spq \le mr+p$;} \\ 0&\text{otherwise.} \end{cases} 
\end{equation*}
 {}From the definition it is immediate that $\chi_k(m)$ only depends on the values of $k$ and $m$ modulo $pq$.
Note that in order for $\chi_k(m)$ to be non-zero, the integer $k+1+spq$ has to be an element of one of
two disjoint strings of $p$ consecutive integers.
In order to show that Definition \ref{chi} is consistent, we have to show
that we cannot both have
$mr+q < k+1+s_1pq \le mr+q+p$ for some $s_1$ and 
 $mr < k+1+s_2pq \le mr+p$ for some $s_2$. Indeed, if both inequalities
 were to hold simultaneously, then it would follow that
$0<q-p<(s_1-s_2)pq<p+q<2q$, which is impossible.
 \par The following theorem expresses the coefficients of $\Phi_{pqr}$ using the coefficients of $\Phi_{pq}$, which can be calculated using \lref{a_pq}, and our function $\chi$ from \dref{chi}. 
\end{definition}
\begin{theorem}[Zhao and Zhang \cite{ZZ1}, 2010] \label{a_pqr} 
Let $p<q<r$ be primes. 
Let $m_0$ be the smallest integer such that 
$m_0r+p+q \geq k+1+pq$.
Then
\begin{equation*}
a_{pqr}(k)=\sum_{m\geq m_0} a_{pq}(m)\chi_k(m).
\end{equation*}
\end{theorem}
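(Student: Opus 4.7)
The plan is to derive the identity from the basic relation $\Phi_{pqr}(x)\Phi_{pq}(x)=\Phi_{pq}(x^r)$, which holds since $r$ is a prime not dividing $pq$. Using the standard factorisation $\Phi_{pq}(x)(1-x^p)(1-x^q)=(1-x)(1-x^{pq})$, I would multiply both sides by $(1-x^p)(1-x^q)/(1-x)$ to convert the awkward factor $\Phi_{pq}(x)$ on the left into the simpler $(1-x^{pq})$. Since
\begin{equation*}
\frac{(1-x^p)(1-x^q)}{1-x}\;=\;(1-x^q)(1+x+\cdots+x^{p-1})\;=:\;P(x)
\end{equation*}
equals the polynomial $P(x)=\sum_{j=0}^{p-1}x^j-\sum_{j=0}^{p-1}x^{q+j}$, this produces the polynomial identity
\begin{equation*}
\Phi_{pqr}(x)(1-x^{pq})\;=\;\Phi_{pq}(x^r)\,P(x).
\end{equation*}

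Comparing the $x^k$-coefficients on each side yields a first-difference recurrence $a_{pqr}(k)-a_{pqr}(k-pq)=B(k)$, where $B(k)$ denotes the $x^k$-coefficient of $\Phi_{pq}(x^r)P(x)$. Because $a_{pqr}$ vanishes outside $[0,\varphi(pqr)]$, iterating the recurrence downwards from some very large $k+Npq$ back to $k$ yields the telescoped formula
\begin{equation*}
a_{pqr}(k)\;=\;-\sum_{s\ge 1}B(k+spq).
\end{equation*}
Expanding $B$ via $\Phi_{pq}(x^r)=\sum_m a_{pq}(m)\,x^{mr}$ and interchanging the two summations gives
\begin{equation*}
a_{pqr}(k)\;=\;\sum_m a_{pq}(m)\,\Bigl(-\sum_{s\ge 1}\tilde\chi(k+spq-mr)\Bigr),
\end{equation*}
where $\tilde\chi$ is the $\{-1,0,1\}$-valued indicator reading off the coefficients of $P$: $\tilde\chi(j)=1$ for $j\in[0,p-1]$, $\tilde\chi(j)=-1$ for $j\in[q,q+p-1]$, and $0$ otherwise.

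The remaining task is to recognise the bracketed inner sum as $\chi_k(m)$. A direct unpacking of \dref{chi} shows that $\chi_k(m)=1$ is equivalent to $\tilde\chi(k+spq-mr)=-1$ holding for some $s\in\mathbb{Z}$, and $\chi_k(m)=-1$ is equivalent to $\tilde\chi(k+spq-mr)=+1$ for some $s\in\mathbb{Z}$; the consistency check immediately following \dref{chi} guarantees that at most one non-zero $\tilde\chi$-term ever appears, so $\chi_k(m)=-\sum_{s\in\mathbb{Z}}\tilde\chi(k+spq-mr)$. The cutoff $m\ge m_0$ is chosen precisely so that $mr\ge k+(p-1)(q-1)\ge k+1$, which in turn forces any $s$ producing a non-zero $\tilde\chi$-value to satisfy $spq\ge pq$, i.e.\ $s\ge 1$; hence for $m\ge m_0$ the sums $\sum_{s\ge 1}$ and $\sum_{s\in\mathbb{Z}}$ coincide. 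Conversely, for $m<m_0$ the minimality of $m_0$ gives $mr\le k+pq-p-q$, and this immediately rules out any $s\ge 1$ contributing a non-zero $\tilde\chi$-value, so extending the outer sum downwards past $m_0$ contributes nothing.

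The main obstacle I anticipate is the bookkeeping in this final step: reconciling the sign conventions between $\tilde\chi$ (dictated by $P$) and \dref{chi}, and verifying that the asymmetric cutoff $m\ge m_0$ precisely matches the positive-$s$ half of the telescoping without double-counting or missing boundary contributions.
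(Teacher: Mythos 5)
Your proof is correct and follows the same route the paper indicates: the paper attributes the result to Zhao and Zhang and points to exactly the identity $\Phi_{pqr}(x)(x^{pq}-1)(x-1)=\Phi_{pq}(x^r)(x^p-1)(x^q-1)$, which is your starting identity after dividing by $1-x$. The telescoping, the coefficient extraction via $\tilde\chi$, and the verification that $m\ge m_0$ (equivalently $mr\ge k+(p-1)(q-1)$) is precisely the cutoff forcing $s\ge 1$ are all the intended bookkeeping, carried out correctly.
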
 
\noindent The proof
makes use of the identity $$\Phi_{pqr}(x)(x^{pq}-1)(x-1)=\Phi_{pq}(x^r)(x^p-1)(x^q-1),$$ which allows
one to relate a ternary coefficient to a sum of binary ones. 
Note that $m_0$ in Theorem 
\ref{a_pqr} equals $\left\lceil\frac{k+(p-1)(q-1)} r\right\rceil$.
\begin{corollary} We have
\begin{equation*}
A(pqr) \leq \max_{j,k \in \mathbb{Z}} \left| \sum_{m \geq j} a_{pq}(m)\chi_k(m) \right|.
\end{equation*}
\end{corollary}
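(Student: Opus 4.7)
The plan is to derive this corollary directly from Theorem \ref{a_pqr}, which is the only nontrivial ingredient. The corollary is essentially a formal consequence of taking absolute values and enlarging the indexing set over which the maximum is taken.

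First, I would unfold the definition of the height: by construction $A(pqr) = \max_{k} |a_{pqr}(k)|$, where one may let $k$ range over all integers since $a_{pqr}(k) = 0$ outside the interval $[0, \varphi(pqr)]$. Next, for each such $k$, I would invoke Theorem \ref{a_pqr} to rewrite
\[
a_{pqr}(k) = \sum_{m \geq m_0(k)} a_{pq}(m)\chi_k(m),
\]
where $m_0(k) = \lceil (k + (p-1)(q-1))/r \rceil$ is the integer attached to $k$ by the theorem.

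Taking absolute values and then the maximum over $k \in \mathbb Z$ gives
\[
A(pqr) = \max_{k \in \mathbb Z} \left| \sum_{m \geq m_0(k)} a_{pq}(m)\chi_k(m) \right|.
\]
Now I would observe that for any fixed $k$, the pair $(m_0(k), k)$ is a particular element of $\mathbb Z \times \mathbb Z$. Hence the quantity on the right is at most the maximum taken over all pairs $(j,k) \in \mathbb Z^2$, namely
\[
A(pqr) \leq \max_{j,k \in \mathbb Z} \left| \sum_{m \geq j} a_{pq}(m)\chi_k(m) \right|,
\]
which is the desired inequality.

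There is essentially no obstacle: the corollary decouples the lower summation index $j$ from $k$, turning the identity of Theorem \ref{a_pqr} into an upper bound that is easier to estimate in later arguments (since one no longer has to track the exact value $m_0(k)$). The only point worth verifying is that $a_{pqr}(k) = 0$ for $k$ outside $[0, \varphi(pqr)]$ is consistent with the bound, which is immediate because the zero value is trivially dominated by any maximum of absolute values.
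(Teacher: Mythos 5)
Your proof is correct and is essentially the only sensible route: apply Theorem~\ref{a_pqr} to express $a_{pqr}(k)$ as the partial sum starting at $m_0(k)$, take absolute values, and then enlarge the maximum to all pairs $(j,k)$. The paper states this corollary without proof precisely because it follows from Theorem~\ref{a_pqr} in exactly the way you describe.
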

\noindent It turns out that the latter inequality is actually an
equality \cite[Lemma 2.3]{ZZ1}.
\par A further result of Zhao and Zhang we need is the following.
\begin{theorem}[Zhao and Zhang \cite{ZZ1}, 2010] \label{sum0} We have
\begin{equation*}
\sum_{m\ge 0} a_{pq}(m)\chi_k(m)=0.
\end{equation*}
\end{theorem}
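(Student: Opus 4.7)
The plan is to deduce \tref{sum0} directly from \tref{a_pqr} by a short periodicity-and-shift argument. From \dref{chi}, $\chi_k(m)$ depends on $k$ only modulo $pq$, so $\sum_{m\ge 0}a_{pq}(m)\chi_k(m)$ depends on $k$ only modulo $pq$; it therefore suffices to verify the vanishing for one representative of each residue class in $[0,pq)$. For $k$ in this range I would choose the shifted representative $k':=k-2pq$, which lies in $[-2pq,-pq)$.

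Next I would observe that \tref{a_pqr} actually holds for \emph{every} integer $k$, not only for $k\ge 0$. This is the one delicate point of the argument, and essentially the sole potential obstacle; however, the coefficient-by-coefficient comparison in the polynomial identity $\Phi_{pq}(x^r)(x^p-1)(x^q-1)=\Phi_{pqr}(x)(x^{pq}-1)(x-1)$, on which \tref{a_pqr} is based, is insensitive to the sign of $k$ once one adopts the convention $a_n(k)=0$ for $k\notin[0,\varphi(n)]$ (already in force in the excerpt). Provided the proof of \tref{a_pqr} is written so as to make this extension transparent, no further work is needed.

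Applying the extended \tref{a_pqr} to $k'$, three facts conspire: (i) $\chi_{k'}(m)=\chi_k(m)$ for every $m$, by $pq$-periodicity; (ii) $a_{pqr}(k')=0$, since $k'<0$; and (iii) the corresponding threshold $m_0=\lceil(k'+\varphi(pq))/r\rceil$ is non-positive, because $k'+\varphi(pq)=k-pq-(p+q-1)\le -p-q<0$. Using $a_{pq}(m)=0$ for $m<0$, fact (iii) yields $\sum_{m\ge m_0}a_{pq}(m)\chi_{k'}(m)=\sum_{m\ge 0}a_{pq}(m)\chi_k(m)$, while (i) and (ii) combined with \tref{a_pqr} give
$$
0=a_{pqr}(k')=\sum_{m\ge m_0}a_{pq}(m)\chi_{k'}(m)=\sum_{m\ge 0}a_{pq}(m)\chi_k(m),
$$
as desired.
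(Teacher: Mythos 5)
Your argument is precisely the ``short proof'' the paper alludes to in the remark following the theorem: shift $k$ by a multiple of $pq$ until it is so negative that $a_{pqr}$ vanishes and the threshold $m_0$ drops below zero, then invoke the $pq$-periodicity of $\chi_k(m)$ in $k$ to identify the resulting full sum with the original one. The caveat you flag about negative $k$ is harmless: \tref{a_pqr} is stated without any restriction on $k$ (the convention $a_n(k)=0$ for $k<0$ or $k>\varphi(n)$ being already in force), and its derivation from the identity $\Phi_{pqr}(x)(x^{pq}-1)(x-1)=\Phi_{pq}(x^r)(x^p-1)(x^q-1)$, via the Laurent expansion of $1/\bigl((x^{pq}-1)(x-1)\bigr)$ in powers of $x^{-1}$, extracts the coefficient of $x^k$ for every integer $k$.
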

\noindent A short proof of this theorem can be given using 
Theorem \ref{a_pqr} and the observation that the value of $\chi_k(m)$ only
depends on the residue class of both $k$ and $m$ modulo $pq$.

\section{Proof of the Corrected Beiter Conjecture}
Our proof will mainly use the methods from the unpublished article \cite{ZZ2} by Zhao and Zhang. Many of the ideas 
there are already 
to be found in
their published paper \cite{ZZ1} (dealing with
the case $p=7$). 
Their proof that $M(7)=4$ 
can be regarded as a baby version of the proof we are going to present.
\par We start by giving a glossary of the notation introduced in the course of our proof.
\vskip .3cm
\centerline{\bf Glossary}
\begin{center}
\begin{tabular}{r c l}
$[\cdot]$ & & Definition \ref{u}\\
$C_{1,1},C_{1,2},C_{2,1},C_{2,2}$ & & Defined just below Lemma \ref{shift}\\
low integer & & Integer in $[0,q_p^*-1]$\\
high integer &  & Integer in  $[q_p^*,p-1]$\\ 
$I_p$ &  &\{0\,,\ldots,\,p-1\}\\
$S$ & & Special integers (\dref{largedef})\\
$P,P^+,P^-$ & & Plain integers (ibid.)\\
$N$ & & Null integers: $N=I_p \setminus (P \cup S)$  \\
$v_0$ & & Largest low special integer\\
$h(v),h_q(v)$ & & \dref{hqdef}\\
map $f$ & & \dref{f}\\
$S_0$ & & $\{v \in S:f(v) \in P\}$\\
map $g$ & & \dref{g}\\
\end{tabular}
\end{center}
\vskip .3cm
In some clearly identifiable cases the interval notation $[a,b]$ is used to denote
the set $\{a,a+1,\ldots,b\}.$
\par Since $M(2)=1$, $M(3)=2$, $M(5)=3$ 
and $M(7)=4$, the conjecture is true for $p\le 7$. 
We will argue by contradiction and assume that there exist primes $7<p < q_1 < r_1$ with $A(pq_1r_1) > \tfrac{2}{3}p$. By
Lemma \ref{chain} we can find primes $q$
and $r$ such that $A(pqr)\geq A(pq_1r_1)>\tfrac{2}{3}p$
and the chain of inequalities
\begin{equation} \label{inequ}
1 \leq p-q_p^* \leq r_p^*<p-r_p^* \leq q_p^* \leq p-1,
\end{equation}
is satisfied. We will assume $A(pqr) > \tfrac{2}{3}p$ 
and arrive at a contradiction.

First, by \tref{a_pqr}, there have to exist integers $i, j$ such that 
\begin{equation}
\label{2p3}
A(pqr)=|a_{pqr}(i)|=\left| \sum_{m \geq j} a_{pq}(m)\chi_i(m) \right| > \frac{2}{3}p.
\end{equation}
Our goal is now to show that this is impossible. The integers $i,j$ will be fixed during the whole proof. For brevity  we will denote 
$\chi_i(m)$ by  $\chi(m)$.

\par From \eqref{inequ}
it follows by \tref{Bzd} that 
$\alpha=p-q_p^*$, $\beta=p-r_p^*$ and
so $a_{pqr}(n)<r_p^*<p/2$ for any integer $n$. 
We infer that $a_{pqr}(i)$ is negative.
Therefore \tref{Bzd} yields $p-r_p^*\geq -a_{pqr}(i)>2p/3$, 
and thus 
\begin{equation}
\label{rp3}
r_p^* < \frac{p}{3},
\end{equation}
an inequality that  will play an important role in our proof. \par 
Since $a_{pqr}(i)$ is negative, we get from \eqref{2p3}
\begin{equation} \label{minus}
\sum_{m \geq j} a_{pq}(m)\chi(m)<-\frac{2}{3}p,
\end{equation}
and from \tref{sum0}
\begin{equation} \label{+}
\sum_{0\leq m < j} a_{pq}(m)\chi(m)>\frac{2}{3}p.
\end{equation}
\par If $a_{pq}(k)$ is non-zero, then we can 
write $k$ uniquely as either $up+vq$ or $up+vq-pq$ for some 
$0\le u\le q-1$ and $0 \leq v \leq p-1$ (see \lref{a_pq}).
\begin{definition} \label{u} (of $[\cdot ]$).
For any $v$, let $0\le [v] \le q-1$ be the unique integer such
that there exists an integer $s$ satisfying
$$([v]p+vq)r+q < i+1+spq \le ([v]p+vq)r+p+q.$$ 
\end{definition} 
\par It is not difficult to show that the set
$$\bigcup\limits_{u=0}^{q-1}\{(up+vq)r+1,\ldots,(up+vq)r+p\}$$
consists of $pq$ distinct numbers
and
forms a complete residue system modulo $pq$. 
From this the existence and uniqueness of $[v]$
follows. Observe that $v\equiv w\,(\mmod p)$ implies $[v]=[w]$.
\par Now we point out a helpful connection between $v$ and $v-r_p^*$.
\begin{lemma} \label{shift} Suppose that $0 \le u \le q-1$. Then
\begin{enumerate}
    \item[\normalfont a)] $\chi(up+vq)=1$ if and only if 
$u=[v]$.
    \item[\normalfont b)]  $\chi(up+vq)=-1$ if and only if $u=[v-r_p^*]$.
\end{enumerate}
\begin{proof}$~$
\begin{enumerate}
    \item[a)] This follows directly from Definitions \ref{chi} and \ref{u}.
    \item[b)] An easy consequence of the congruences 
\begin{align*}
([v-r_p^*]p+(v-r_p^*)q)r+q &\equiv ([v-r_p^*]p+(v-r_p^*)q)r+rr_p^*q \\ &\equiv  ([v-r_p^*]p+vq)r\,(\mmod{pq}),
\end{align*}
that establish a connection between the two intervals occurring in
Definition  \ref{chi}.\qedhere
\end{enumerate}
\end{proof}
\end{lemma}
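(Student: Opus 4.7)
The plan is to unwind the definitions carefully in both parts, using the fact that $\chi$ and $[\cdot]$ are defined in terms of very similar interval conditions modulo $pq$.

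For part (a), I would observe that by Definition \ref{chi}, with $m=up+vq$, we have $\chi(up+vq)=1$ precisely when there exists an integer $s$ with
\begin{equation*}
(up+vq)r+q < i+1+spq \le (up+vq)r+p+q.
\end{equation*}
Compare this with Definition \ref{u}: $u=[v]$ iff some integer $s$ satisfies $([v]p+vq)r+q < i+1+spq \le ([v]p+vq)r+p+q$. These conditions are syntactically identical once we substitute $u=[v]$. What remains is to note that, by the remark after Definition \ref{u}, the union of the half-open intervals $((up+vq)r+q,\,(up+vq)r+p+q]$ as $u$ ranges over $\{0,1,\ldots,q-1\}$ covers each residue class modulo $pq$ exactly once, so among the $q$ possible values of $u$ the defining condition of $\chi=1$ singles out exactly one, namely $[v]$.

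For part (b), the goal is to exhibit a bijective translation between the interval defining $\chi(up+vq)=-1$ and the interval defining $[v-r_p^*]$. By Definition \ref{chi}, $\chi(up+vq)=-1$ means some $s$ satisfies $(up+vq)r < i+1+spq \le (up+vq)r+p$. Plugging $u=[v-r_p^*]$ into this gives the interval $([v-r_p^*]p+vq)r < i+1+spq \le ([v-r_p^*]p+vq)r+p$. Now I would use the congruence
\begin{equation*}
([v-r_p^*]p+(v-r_p^*)q)r + q \equiv ([v-r_p^*]p+(v-r_p^*)q)r + rr_p^*q \equiv ([v-r_p^*]p+vq)r \pmod{pq},
\end{equation*}
valid because $rr_p^*\equiv 1\pmod p$ forces $rr_p^*q\equiv q\pmod{pq}$. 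This shows that the two half-open intervals of length $p$ coincide modulo $pq$, so the $\chi=-1$ condition at $u=[v-r_p^*]$ is equivalent (after absorbing a multiple of $pq$ into $s$) to the condition $([v-r_p^*]p+(v-r_p^*)q)r + q < i+1+s'pq \le ([v-r_p^*]p+(v-r_p^*)q)r + q + p$, which is exactly Definition \ref{u} applied to $v-r_p^*$.

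The main (and only) obstacle is a bookkeeping one: one must check that the shift of $s$ needed to move between $\chi$-intervals and $[\cdot]$-intervals is still an integer and that uniqueness in the range $0\le u\le q-1$ is preserved under the translation. Both are immediate from the fact that the intervals have length $p$, differ by an integer multiple of $pq$, and by the complete-residue-system remark already recorded after Definition \ref{u}. No new ingredients beyond the two definitions and the identity $rr_p^*\equiv 1\pmod p$ are required.
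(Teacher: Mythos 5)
Your proposal is correct and follows essentially the same route as the paper: part (a) is a direct comparison of Definitions \ref{chi} and \ref{u}, and part (b) hinges on the identical congruence chain $([v-r_p^*]p+(v-r_p^*)q)r+q \equiv ([v-r_p^*]p+vq)r \pmod{pq}$ coming from $rr_p^*q\equiv q\pmod{pq}$. You simply spell out the bookkeeping (uniqueness via the complete-residue-system remark, absorbing the shift of $s$) that the paper leaves implicit.
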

We consider next when $a_{pq}(m)\chi(m)$ is non-zero. This will naturally lead to the consideration
of the following four sets:
\begin{align*}
C_{1,1}&=\{0\le v\le q_p^*-1:\,j\le [v-r_p^*]p+vq\text{~and~}[v-r_p^*] \le p_q^*-1\};\\
C_{1,2}&=\{q_p^* \le v\le  p-1:\, j \le [v]p+vq-pq\text{~and~}[v]\ge p_q^*\};\\
C_{2,1}&=\{0 \le v \le q_p^*-1:\,[v]p+vq < j
\text{~and~}[v] \le p_q^*-1\};\\
C_{2,2}&=\{q_p^* \le v \le p-1:\,[v-r_p^*]p+vq-pq <j\text{~and~}[v-r_p^*]\ge p_q^*\}.
\end{align*}
Let us look for example at the terms on the left-hand side of inequality  \eqref{minus} with $m \geq j$ and $a_{pq}(m)\chi(m) = -1$. 
By \lref{a_pq} we know that $m$ can be 
written as either $up+vq$ or $up+vq-pq$ with $0 \le v \le p-1$ and $0 \le u \le q-1$. 
If  $\chi(up+vq) \neq 0$ or $\chi(up+vq-pq)\neq 0$, then 
by \lref{shift} either $u=[v]$ 
or $u=[v-r_p^*]$. Simple considerations in this spirit then lead to the following lemma.
\begin{lemma}\label{fourcases}$~~$\\
{\rm a)} We have $\chi(m)=-1$, $a_{pq}(m)=1$ and
$m\ge j$ if and only of $m=[v-r_p^*]p+vq$ with 
$v\in C_{1,1}.$\\
{\rm b)} We have $\chi(m)=1$, $a_{pq}(m)=-1$ and
$m\ge j$ if and only if $m=[v]p+vq-pq$ with 
$v\in C_{1,2}.$\\
{\rm c)} We have $\chi(m)=1$, $a_{pq}(m)=1$ and
$m<j$ if and only if $m=[v]p+vq$ with $v\in C_{2,1}$.\\
{\rm d)} We have $\chi(m)=-1$, $a_{pq}(m)=-1$ and
$m<j$ if and only if $m=[v-r_p^*]p+vq-pq$ with 
$v\in C_{2,2}$.
\end{lemma}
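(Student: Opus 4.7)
The plan is to handle the four parts in parallel, as each is a direct synthesis of Lemma \ref{a_pq}, Lemma \ref{shift}, and the relevant inequality between $m$ and $j$. First I would record one preliminary observation: since $\chi(m)$ depends only on $m$ modulo $pq$, we have $\chi(up+vq-pq)=\chi(up+vq)$ for all integers $u,v$. This is what allows Lemma \ref{shift}, which is phrased in terms of $\chi(up+vq)$, to be invoked in the cases where $a_{pq}(m)=-1$ forces the representation $m = up+vq-pq$.

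I would then walk through case (a) as the template. By Lemma \ref{a_pq}, $a_{pq}(m)=1$ is equivalent to $m = up+vq$ for unique integers $u,v$ with $u \in [0,\,p_q^*-1]$ and $v \in [0,\,q_p^*-1]$. Imposing $\chi(m)=-1$ then forces $u=[v-r_p^*]$ by Lemma \ref{shift}(b); substituting yields $m=[v-r_p^*]p+vq$, while the bound $u \le p_q^*-1$ becomes the constraint $[v-r_p^*]\le p_q^*-1$. Appending $m\ge j$ gives the remaining clause $[v-r_p^*]p+vq \ge j$. These are exactly the defining conditions of $C_{1,1}$, which settles the ``only if'' direction; the converse is obtained by reversing each implication, using that the stated bounds on $v$ and on $[v-r_p^*]$ place us in the first branch of Lemma \ref{a_pq}, after which Lemma \ref{shift}(b) delivers $\chi(m)=-1$.

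The other three parts follow the same template, differing only in which branch of Lemma \ref{a_pq} supplies the form of $m$, which part of Lemma \ref{shift} pins down $u$, and which side of $j$ the integer $m$ lies on. Explicitly: case (b) combines $a_{pq}(m)=-1$ (so $m=up+vq-pq$ with $v \ge q_p^*$) with $\chi(m)=1$, which by the periodicity remark and Lemma \ref{shift}(a) forces $u=[v]$, and together with $m\ge j$ recovers $C_{1,2}$; case (c) combines $a_{pq}(m)=1$ with $\chi(m)=1$, so $u=[v]$, plus $m<j$, producing $C_{2,1}$; and case (d) combines $a_{pq}(m)=-1$ with $\chi(m)=-1$ (forcing $u=[v-r_p^*]$) and $m<j$, producing $C_{2,2}$.

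There is no real obstacle here, only bookkeeping: the two cases $v\in[0,q_p^*-1]$ versus $v\in[q_p^*,p-1]$ of Lemma \ref{a_pq} must be lined up with the subscripts ``$1$'' versus ``$2$'' in $C_{\cdot,1}$ and $C_{\cdot,2}$, and similarly the induced dichotomy $[\,\cdot\,]\le p_q^*-1$ versus $[\,\cdot\,]\ge p_q^*$ must be tracked so that the $u$-component forced by Lemma \ref{shift} really lies in the admissible range for the chosen branch. Once this matching is fixed, nothing further is required.
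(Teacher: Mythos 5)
Your proof is correct and follows exactly the route the paper sketches (the paper leaves the proof of Lemma \ref{fourcases} to the reader after the remark ``Simple considerations in this spirit then lead to the following lemma''): combine Lemma \ref{a_pq} to fix the form of $m$ and the admissible ranges of $u,v$, then Lemma \ref{shift} to pin $u$ to $[v]$ or $[v-r_p^*]$, then impose the sign of $m-j$, and match the result to the defining conditions of the relevant $C$-set. Your preliminary observation that $\chi(up+vq-pq)=\chi(up+vq)$, needed to invoke Lemma \ref{shift} in the branches with $a_{pq}(m)=-1$, is exactly the periodicity remark the paper records just after Definition \ref{chi}.
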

An important property of the elements in the C-sets is whether they are \emph{low} or \emph{high}.
\begin{notation} \label{lowhigh} (Low, high).
If $0 \le v \le q_p^*-1$, we will call $v$ a \emph{low} integer. If $q_p^* \le v \le p-1$ we will call $v$ a \emph{high} integer. 
\end{notation}
Building on the $C$-sets we define some further ones. 
\begin{definition} \label{largedef}
Define
\begin{flalign*}
\phantom{0123}S~  =& \ (C_{1,1}\cap C_{2,1})\cup (C_{1,2}\cap C_{2,2})  &&\text{~(special integers)};&\\
\phantom{0123}P^+  =& \ \{v\in C_{1,1}:[v]\ge p_q^*\}\cup\{v\in C_{1,2}:[v-r_p^*] \le p_q^*\} && &\\
\phantom{0123}P^-  =& \ \{v\in C_{2,1}:[v-r_p^*]\ge p_q^*\} \cup  \{v\in C_{2,2}:[v] \le p_q^*\} && &\\
\phantom{0123}P  = & \ P^+\cup P^-  && \text{~(plain integers)};&\\
\phantom{0123}N  = &\ \{0,\ldots,p-1\} \setminus (P \cup S) && \text{~(null integers)}.&
\end{flalign*}
\end{definition}
The intersections $C_{1,1}\cap C_{2,1}$ and $C_{1,2}\cap C_{2,2}$ consist only of low, respectively
high integers. Note that the sets $S,P^+,P^-$ and $N$ are mutually disjoint.
\par We will show that a large value of $M(p)$ leads to a large value of $|S|-|N|$ (\lref{S-N}), which we subsequently show to be impossible.
\begin{lemma}\label{boundsp+} We have $|S|+|P^+|>\frac 23p$.
\begin{proof}
Assume $a_{pq}(m)\chi(m)=-1$ for some $m\geq j$. 
It follows
by Lemma \ref{fourcases} that $m$ satisfies 
either $m=[v-r_p^*]p+vq$ with 
$v\in C_{1,1}$ or 
$m=[v]p+vq-pq$ with $v\in C_{1,2}$. 
Assume first $v\in C_{1,1}$. 
If $[v]\geq p_q^*$, then $v\in P^+$. If $[v]\leq p_q^*-1$ and $[v]p+vq<j$, then $v\in C_{2,1}$ 
and so $v\in C_{1,1}\cap C_{2,1}\subseteq S$.
Hence, $v\notin S\cup P^+$ implies $[v]\leq p_q^*-1$ and $j\leq [v]p+vq$ and thus $a_{pq}(m')\chi(m')=1$ for $m'=[v]p+vq$. It follows that $a_{pq}(m)\chi(m)+a_{pq}(m')\chi(m')=0$ and
so the terms corresponding to $m$ and $m'$  in
\eqref{minus} cancel.
\par Next assume $m=[v]p+vq-pq$ and 
$v\in C_{1,2}$. Then, by
a similar reasoning, either $v\in S\cup P^+$ or both $p_q^*\leq [v-r_p^*]$ and $j\leq m'$ with
$m'=[v-r_p^*]p+vq-pq$. 
Again, the terms corresponding to $m$ and $m'$ in
\eqref{minus} cancel.

In order for the sum in \eqref{minus} to be smaller than $-2p/3$, there must be more than $2p/3$ terms with value $-1$ which are not canceled like this. 
As we have just seen, these 
satisfy $v\in S\cup P^+$. Since $S$ and $P^+$ are disjoint, the proof is completed.
\end{proof}
\end{lemma}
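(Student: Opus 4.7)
The plan is a cancellation argument built directly on the sign structure of the sum in \eqref{minus}. Every summand $a_{pq}(m)\chi(m)$ lies in $\{-1,0,+1\}$, so in order for the sum to drop below $-\tfrac{2}{3}p$ the number of $-1$ terms exceeding the number of $+1$ terms must be more than $\tfrac{2}{3}p$. My aim is to injectively match each $-1$ term coming from a $v\notin S\cup P^+$ with a $+1$ term (both with $m\ge j$), so that the uncancelled surplus of $-1$'s is bounded by $|S\cup P^+|=|S|+|P^+|$.

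First, I would use \lref{fourcases}(a)--(b) to enumerate the $v$'s producing a $-1$ with $m\ge j$: the low $v\in C_{1,1}$ contribute via $m=[v-r_p^*]p+vq$, and the high $v\in C_{1,2}$ contribute via $m=[v]p+vq-pq$. Suppose $v\in C_{1,1}$ lies outside $S\cup P^+$. Since $v\notin P^+$ and $v\in C_{1,1}$, the definition of $P^+$ forces $[v]\le p_q^*-1$. Since $v\notin S$ and $v\in C_{1,1}$, we have $v\notin C_{2,1}$, which combined with $[v]\le p_q^*-1$ forces $[v]p+vq\ge j$. Setting $m'=[v]p+vq$, \lref{a_pq} gives $a_{pq}(m')=1$ and \lref{shift}(a) gives $\chi(m')=1$, so the $+1$ at $m'$ cancels the original $-1$. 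The high case $v\in C_{1,2}\setminus(S\cup P^+)$ is symmetric: it forces $[v-r_p^*]\ge p_q^*$ and $[v-r_p^*]p+vq-pq\ge j$, and by \lref{shift}(b) together with \lref{a_pq} the index $m'=[v-r_p^*]p+vq-pq$ yields $a_{pq}(m')\chi(m')=+1$.

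To upgrade this pointwise cancellation into an injection, I would observe that in either construction $m'\equiv vq\pmod{p}$, and since $\gcd(p,q)=1$ and $v$ ranges in $\{0,\dots,p-1\}$, the assignment $v\mapsto m'$ is injective; moreover the $+1$ terms so produced are never among the $-1$ terms already listed, as they carry the opposite sign combination in \lref{fourcases}. Hence every $v\in(C_{1,1}\cup C_{1,2})\setminus(S\cup P^+)$ has its $-1$ killed by a distinct $+1$ in the same truncated sum, and the net deficit of $-1$'s over $+1$'s is at most $|S\cup P^+|=|S|+|P^+|$. Combined with \eqref{minus} this forces $|S|+|P^+|>\tfrac{2}{3}p$.

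The main delicate point is the simultaneous verification, in each of the two cases, that the constructed $m'$ satisfies all three of $m'\ge j$, $a_{pq}(m')\ne 0$, and $\chi(m')\ne 0$ with matching signs; this requires threading the low/high dichotomy of $v$ through the $C$-set definitions carefully. The injectivity step is routine once one notes the mod-$p$ pattern, and the final counting step is immediate.
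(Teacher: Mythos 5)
Your proof is correct and follows essentially the same cancellation argument as the paper: deduce from $v\notin S\cup P^+$ that the companion index $m'$ yields $a_{pq}(m')\chi(m')=+1$ with $m'\ge j$, then count. The one thing you spell out that the paper leaves implicit is the injectivity of $v\mapsto m'$ via $m'\equiv vq\pmod p$, which is a reasonable detail to make explicit.
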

\begin{lemma}\label{boundsp-} We have $|S|+|P^-|>\frac 23p$.
\begin{proof}
The proof is similar to the proof of \lref{boundsp+}. Assume $a_{pq}(m)\chi(m)=1$ for some $m<j$, 
then by Lemma \ref{fourcases} 
either $v\in C_{2,1}$ or $v\in C_{2,2}$. 
If $v\in C_{2,1}$, then $m=[v]p+vq$ and either $v\in S\cup P^-$ or $a_{pq}(m')\chi(m')=-1$ for $m'=[v-r_p^*]p+vq$ 
with $m'<j$. 

Similarly, if $v\in C_{2,2}$, then $m=[v-r_p^*]p+vq-pq$ and either $v\in S\cup P^-$ or $a_{pq}(m')\chi(m')=-1$ for $m'=[v]p+vq-pq$ with $m'<j$. 

In both cases, the terms $a_{pq}(m)\chi(m)=1$ cancel out against the terms $a_{pq}(m')\chi(m')=-1$ if $v\notin S\cup P^-$. By $\eqref{+}$ and the disjointness 
of $S$ and $P^-$ it then follows that $|S|+|P^-|>\frac 23p$.\end{proof}

\end{lemma}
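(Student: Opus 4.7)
My plan is to mirror the proof of Lemma \ref{boundsp+} by exploiting the symmetry between \eqref{minus} and \eqref{+}. Whereas the former lemma analyzed the dominant negative terms (product $-1$, $m \geq j$) in \eqref{minus}, here I analyze the dominant positive terms (product $+1$, $m < j$) in \eqref{+}, showing that each such term either forces $v \in S \cup P^-$ or is annihilated by a companion term already present in the sum.

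By Lemma \ref{fourcases}, a term $a_{pq}(m)\chi(m) = +1$ with $m<j$ falls into either case (c), so that $v \in C_{2,1}$ and $m = [v]p+vq$, or case (d), so that $v \in C_{2,2}$ and $m = [v-r_p^*]p+vq-pq$. In the first case I consider the companion index $m' = [v-r_p^*]p+vq$: by Lemma \ref{shift}(b) one has $\chi(m')=-1$. If $[v-r_p^*]\ge p_q^*$ then by definition $v \in P^-$. Otherwise $[v-r_p^*]\le p_q^*-1$ and Lemma \ref{a_pq} gives $a_{pq}(m')=1$, so the companion contributes $-1$; here either $m' \ge j$, in which case $v \in C_{1,1} \cap C_{2,1} \subseteq S$, or $m' < j$, in which case the terms at $m$ and $m'$ cancel inside \eqref{+}. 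The second case is handled symmetrically: pair $m$ with $m' = [v]p+vq-pq$ and use Lemma \ref{shift}(a) together with Lemma \ref{a_pq}; either $[v] \le p_q^*$ and $v \in P^-$, or $v \in C_{1,2} \cap C_{2,2} \subseteq S$, or $m'<j$ and cancellation occurs.

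After all such cancellations, every surviving positive contribution in \eqref{+} arises from a $v \in S \cup P^-$. Since $S$ and $P^-$ are disjoint and the sum in \eqref{+} exceeds $\tfrac{2}{3}p$, the desired bound $|S|+|P^-| > \tfrac 23 p$ follows. The only delicate point, as in Lemma \ref{boundsp+}, is the careful case analysis at the boundary values $[v] = p_q^*$ and $[v-r_p^*] = p_q^*$, ensuring that the slightly asymmetric inequalities in the definition of $P^-$ (the ``$\ge p_q^*$'' clause for the $C_{2,1}$ branch versus the ``$\le p_q^*$'' clause for the $C_{2,2}$ branch) are exactly what picks up the residual cases not eliminated by the $m \leftrightarrow m'$ pairing. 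No new ideas beyond Lemmas \ref{a_pq}, \ref{shift}, and \ref{fourcases} are needed.
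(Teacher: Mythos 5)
Your proof is correct and follows essentially the same route as the paper's: pair each $+1$ term of \eqref{+} with its companion index via Lemma \ref{shift} and Lemma \ref{a_pq}, and observe that failure to cancel forces $v\in S\cup P^-$. The extra care you take at the boundary values $[v-r_p^*]=p_q^*$ and $[v]=p_q^*$ is consistent with the definition of $P^-$, so nothing is missing.
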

\begin{lemma} \label{S-N}
If $A(pqr)>\frac 23 p$, then
\begin{equation}
\label{SminN}
|S|-|N|>\frac{p}{3}.
\end{equation}
\begin{proof}
Since by definition each $v$ is in exactly one of the 
mutually disjoint sets $S, P^+, P^-$ and $N$, we have
\begin{equation} \label{whole}
|S|+|P^+|+|P^-|+|N|=p.
\end{equation}
On combining this identity with \lref{boundsp+}, \lref{boundsp-} and \eqref{whole} the proof is completed. 
\end{proof}
\end{lemma}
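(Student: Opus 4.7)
The plan is to combine the two previous lemmas with the partition identity \eqref{whole}. First I would note that by Definition \ref{largedef} the sets $S$, $P^+$, $P^-$, and $N$ partition $\{0,\ldots,p-1\}$, so their cardinalities sum to exactly $p$. This is the only structural ingredient that has not yet been used, and it lets us eliminate the plain integers from the inequalities already proved.

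Next I would add the bounds from \lref{boundsp+} and \lref{boundsp-} to get
\begin{equation*}
2|S| + |P^+| + |P^-| > \tfrac{4}{3}p.
\end{equation*}
From the partition identity $|S|+|P^+|+|P^-|+|N|=p$ I would substitute $|P^+|+|P^-|=p-|S|-|N|$ into the display, obtaining
\begin{equation*}
2|S| + p - |S| - |N| > \tfrac{4}{3}p,
\end{equation*}
which simplifies immediately to $|S|-|N| > p/3$, as required.

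There is no real obstacle here: once \lref{boundsp+} and \lref{boundsp-} are in hand, the lemma is a one-line bookkeeping argument whose only purpose is to package those two estimates into the single inequality \eqref{SminN} that will drive the remainder of the proof of \tref{cbc}. The real work has already been done in establishing the two previous cancellation arguments; this lemma is just the combinatorial consequence.
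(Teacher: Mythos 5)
Your proof is correct and follows exactly the same approach as the paper: the paper simply states that combining \lref{boundsp+}, \lref{boundsp-}, and the partition identity \eqref{whole} completes the proof, and you have supplied the straightforward algebra that the paper leaves to the reader.
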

\begin{lemma}
\label{lowineq}
If $v$ is both low and special, then $[v]<[v-r_p^*]\leq p_q^*-1$.
\begin{proof}
By \dref{largedef} ($S$) we have
$v\in C_{1,1}\cap C_{2,1}$ and so $[v-r_p^*]\leq p_q^*-1$ and $[v]p+vq<j\leq [v-r_p^*]p+vq$, as desired. 
\end{proof}
\end{lemma}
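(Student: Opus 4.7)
The plan is to unpack the definition of a special integer and observe that, because $v$ is low, only one of the two pieces of $S$ can contribute, after which both required inequalities fall out by inspecting the defining conditions of $C_{1,1}$ and $C_{2,1}$.

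First I would note that by \dref{largedef} a special integer $v$ lies either in $C_{1,1}\cap C_{2,1}$ or in $C_{1,2}\cap C_{2,2}$. Since $v$ is low, that is $0\le v\le q_p^*-1$, and since every element of $C_{1,2}$ (and of $C_{2,2}$) satisfies $v\ge q_p^*$, the second option is impossible. Hence $v\in C_{1,1}\cap C_{2,1}$.

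Next I would simply read off the consequences. Membership in $C_{1,1}$ gives $[v-r_p^*]\le p_q^*-1$, which is already the right-hand inequality of the claim, together with $j\le [v-r_p^*]p+vq$. Membership in $C_{2,1}$ gives $[v]\le p_q^*-1$ together with $[v]p+vq<j$. Chaining these two estimates on $j$ yields
\begin{equation*}
[v]p+vq<j\le [v-r_p^*]p+vq,
\end{equation*}
and subtracting $vq$ and dividing by $p$ gives $[v]<[v-r_p^*]$. Combined with $[v-r_p^*]\le p_q^*-1$ from the previous step, this is exactly the desired chain of inequalities.

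There is really no obstacle here: the lemma is essentially a bookkeeping statement that directly unfolds the definitions of $S$, $C_{1,1}$ and $C_{2,1}$. The only thing to be careful about is the bookkeeping step that rules out $C_{1,2}\cap C_{2,2}$ when $v$ is low, which is what justifies using the $C_{1,1}\cap C_{2,1}$ clause of the definition of $S$.
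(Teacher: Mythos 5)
Your proof is correct and takes essentially the same approach as the paper: unpack the definition of $S$, note that $v$ being low forces $v\in C_{1,1}\cap C_{2,1}$ (the paper states just before the lemma that $C_{1,2}\cap C_{2,2}$ consists only of high integers), and read off the two inequalities from the defining conditions of $C_{1,1}$ and $C_{2,1}$.
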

\begin{lemma}
\label{highineq}
If $v$ is both high and special, then $p_q^*\leq [v-r_p^*]<[v]$.
\begin{proof}
By \dref{largedef} ($S$) we have $v\in C_{1,2}\cap C_{2,2}$. 
It follows that $[v-r_p^*]\geq p_q^*$ and $[v-r_p^*]p+vq-pq<j\leq [v]p+vq-pq$, as desired. 
\end{proof}
\end{lemma}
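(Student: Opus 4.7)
The plan is to mirror the proof of Lemma \ref{lowineq} (which handles the analogous ``low'' case), exploiting the fact that the defining decomposition $S=(C_{1,1}\cap C_{2,1})\cup(C_{1,2}\cap C_{2,2})$ splits cleanly along the low/high dichotomy.

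First I would observe that $C_{1,1}$ and $C_{2,1}$ are, by \dref{largedef}, subsets of $\{0,\ldots,q_p^*-1\}$, so they contain only low integers. Hence a high integer $v$ cannot possibly lie in $C_{1,1}\cap C_{2,1}$, and therefore ``$v$ high and special'' forces $v\in C_{1,2}\cap C_{2,2}$. This reduction is the only conceptual step; the rest is just unpacking the definitions.

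Next I would simply read off the desired inequalities. The defining condition of $C_{2,2}$ yields both $[v-r_p^*]\ge p_q^*$ — which is precisely the left half of the claimed chain — and the strict bound $[v-r_p^*]p+vq-pq<j$. Likewise the defining condition of $C_{1,2}$ gives $j\le[v]p+vq-pq$. Concatenating these two bounds and cancelling the common additive term $vq-pq$ on each side, then dividing by $p>0$, produces $[v-r_p^*]<[v]$, completing the chain $p_q^*\le[v-r_p^*]<[v]$.

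I do not anticipate any obstacle: the statement is a direct definition-chase, structurally identical to \lref{lowineq} but with the high half of $S$ playing the role that the low half played there. The only point worth flagging explicitly is the initial observation that membership in $C_{1,1}\cup C_{2,1}$ is vacuous for high $v$, since without it one might worry that the two halves of $S$ could interact.
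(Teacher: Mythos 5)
Your proposal is correct and matches the paper's own proof: both deduce from the low/high split that a high special integer must lie in $C_{1,2}\cap C_{2,2}$, then read off $[v-r_p^*]\ge p_q^*$ and $[v-r_p^*]p+vq-pq<j\le[v]p+vq-pq$ directly from the defining conditions.
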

In order to prove Theorem \ref{cbc} we 
will derive a contradiction to the conclusion of \lref{S-N}. In principle we need to show that there can't be much more special than null integers. First we will prove a lemma which gives some bounds for integers which can be special. Afterwards, we will define an injection $f$ from a subset of $S$ into $N$ and a bijection $g$ between two distinct sets of integers where only one integer can be special. The domain and range from $f$ and $g$ will be disjoint. Therefore, the elements in the domain of $f$ are not important for the difference $|S|-|N|$ and we will get $|S|-|N| \leq p/3$ because of the bounds for special integers and the bijection $g$. First, we bound the low special integers.
As there are  $p-q_p^*$ 
integers in $[q_p^*,p-1]$, by  
\eqref{inequ}, \eqref{rp3} and 
\eqref{SminN} we find
$p-q_p^*\le r_p^*<\frac p3< |S|-|N|<|S|$, 
and so there is at least one low special integer. 
\begin{notation} 
\label{defv0} ($v_0$).
Let $v_0$
denote the largest low special integer.
\end{notation}
\begin{lemma} \label{rp*+}
If $v \in S\cup P^-$ is a low integer, 
then $v_1 \notin S\cup P^+$ for every $0 \leq v_1 \leq v-p+q_p^*$. 
\begin{proof}
The assumption on $v$ implies by \dref{largedef} ($S$) and \dref{largedef} ($P^-$), that $v\in C_{2,1}$. Hence, $[v]\leq p_q^*-1$ and $[v]p+vq<j$.

Assume for the sake of contradiction that $v_1\in S\cup P^+$ for some such $v_1$. As $v_1\leq v-p+q_p^*\leq v\leq q_p^*-1$, the integer $v_1$ is low. By \dref{largedef} ($S$) and \dref{largedef} ($P^+$) it now follows that 
$v_1\in C_{1,1}$ and hence $[v_1-r_p^*] \le  p_q^*-1$ and $j \le [v_1-r_p^*]p+v_1q$. We 
conclude that $[v]p+vq<j\leq [v_1-r_p^*]p+v_1q$. 
{}From this it follows that 
$(v-v_1)q<([v_1-r_p^*]-[v])p,$ and hence
\begin{equation*}
(p-q_p^*)q \le (v-v_1)q < ([v_1-r_p^*]-[v])p.
\end{equation*}
Using \lref{pqinv} this leads to a contradiction, since we also have
\begin{equation}
([v_1-r_p^*]-[v])p\leq [v_1-r_p^*]p \leq (p_q^*-1)p = (p-q_p^*)q-p+1,
\end{equation}
completing the proof.
\end{proof}
\end{lemma}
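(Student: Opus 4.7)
The plan is to argue by contradiction: suppose some $v_1 \in S\cup P^+$ satisfies $0\le v_1\le v-p+q_p^*$, and derive a numerical contradiction from the defining inequalities of the $C$-sets together with Lemma \ref{pqinv}.

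First I would unpack the hypothesis on $v$. Since $v$ is low, $v\in S$ forces $v\in C_{1,1}\cap C_{2,1}$, and $v\in P^-$ together with lowness forces $v\in C_{2,1}$ (the $C_{2,2}$-alternative in the definition of $P^-$ requires $v$ to be high). Either way, $v\in C_{2,1}$, so by \dref{largedef} we extract the two key inequalities $[v]\le p_q^*-1$ and $[v]p+vq<j$.

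Next I would analyze $v_1$ analogously. The bound $v_1\le v-p+q_p^*\le q_p^*-1$ shows $v_1$ is low. If $v_1\in S$, lowness again forces $v_1\in C_{1,1}$; if $v_1\in P^+$, the $C_{1,2}$-alternative is ruled out by lowness, so again $v_1\in C_{1,1}$. This yields $[v_1-r_p^*]\le p_q^*-1$ and $j\le [v_1-r_p^*]p+v_1q$.

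Now I would chain the two sandwiched inequalities on $j$ to get
\begin{equation*}
[v]p+vq<j\le [v_1-r_p^*]p+v_1q,
\end{equation*}
which rearranges to $(v-v_1)q<([v_1-r_p^*]-[v])p$. Applying $v-v_1\ge p-q_p^*$ on the left and $[v_1-r_p^*]-[v]\le p_q^*-1$ on the right gives $(p-q_p^*)q<(p_q^*-1)p$. The finish is Lemma \ref{pqinv}: $pp_q^*+qq_p^*=pq+1$ rewrites as $(p_q^*-1)p=(p-q_p^*)q-p+1$, so the inequality becomes $(p-q_p^*)q<(p-q_p^*)q-p+1$, i.e. $p<1$, contradiction.

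The main obstacle, as far as I can see, is not analytic but bookkeeping: correctly pinning down, for each of the four combinations (low/high) $\times$ ($S$/$P^\pm$), which $C_{i,j}$-membership and which of the inequalities $[\cdot]p+\cdot q \lessgtr j$ and $[\cdot]\lessgtr p_q^*-1$ are implied. Once this is set up, the arithmetic reduction via \lref{pqinv} is essentially forced by the presence of the quantity $p-q_p^*$ on the left and $p_q^*-1$ on the right, which are exactly the pair that Lemma \ref{pqinv} relates.
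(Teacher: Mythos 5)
Your proposal is correct and follows essentially the same route as the paper's proof: extract $C_{2,1}$-membership for $v$ and $C_{1,1}$-membership for $v_1$ from lowness and the definitions of $S$, $P^-$, $P^+$; sandwich $j$ between $[v]p+vq$ and $[v_1-r_p^*]p+v_1q$; and close with Lemma~\ref{pqinv}. The paper phrases the final step as $([v_1-r_p^*]-[v])p\le[v_1-r_p^*]p\le(p_q^*-1)p=(p-q_p^*)q-p+1$ while you substitute directly into the strict inequality to reach $p<1$, but this is the same computation.
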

\begin{corollary} \label{bound}
If an integer $v$ is both low and special, 
then $v_0-p+q_p^*+1\le v\le v_0$. 
\end{corollary}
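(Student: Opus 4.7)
The plan is to derive the corollary as a direct application of Lemma \ref{rp*+}, with $v_0$ itself playing the role of the distinguished low element in the hypothesis of that lemma.

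The upper bound $v \le v_0$ is immediate from Notation \ref{defv0}, which declares $v_0$ to be the \emph{largest} low special integer. So all the content is in the lower bound $v \ge v_0 - p + q_p^* + 1$. To obtain it, I would first observe that $v_0$ is, by definition, both low and special, and in particular $v_0 \in S \subseteq S \cup P^-$, which is exactly the hypothesis needed to apply Lemma \ref{rp*+} with $v_0$ in place of the generic $v$ appearing there. The conclusion of the lemma then gives that every integer $v_1$ in the range $0 \le v_1 \le v_0 - p + q_p^*$ fails to lie in $S \cup P^+$.

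Now suppose $v$ is an arbitrary low special integer. Then $v \in S$, and since $S \subseteq S \cup P^+$, the previous paragraph forbids $v$ from lying in the interval $[0, v_0 - p + q_p^*]$. Consequently $v > v_0 - p + q_p^*$, i.e., $v \ge v_0 - p + q_p^* + 1$, which is the asserted lower bound. (If $v_0 - p + q_p^* + 1 \le 0$ the lower bound is vacuous, consistently with the fact that every nonnegative integer then trivially satisfies it.)

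There is essentially no obstacle here: the corollary is a bookkeeping consequence of Lemma \ref{rp*+}, whose entire content is precisely a gap statement of the form ``if $v$ is a low element of $S \cup P^-$, then the window $[0, v - p + q_p^*]$ is devoid of elements of $S \cup P^+$''. The only thing worth emphasizing in the write-up is the choice to \emph{specialize} Lemma \ref{rp*+} to $v = v_0$, which turns the relative gap statement into a concrete bound on how far below $v_0$ any other low special integer can lie.
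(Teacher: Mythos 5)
Your proof is correct and is exactly the intended derivation: the paper states the corollary without a displayed proof precisely because it follows immediately from Lemma \ref{rp*+} specialized to $v = v_0$, combined with the maximality of $v_0$ among low special integers.
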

From this we deduce that the number of low special integers is at most $p-q_p^*\leq r_p^*<\frac p3<|S|$, so there must exist high special integers.
\begin{lemma} \label{rp*-}
If $v \in S\cup P^+$ is a low integer, we have $v_1 \notin S\cup P^-$ for every $v+r_p^* \leq v_1 \leq q_p^*-1$. \begin{proof}
Similar to the proof of \lref{rp*+}.
The assumption on $v$ implies  by \dref{largedef} ($S$) and \dref{largedef} ($P^+$) that $v\in C_{1,1}$ 
and hence $[v-r_p^*]\leq p_q^*-1$ and
$j\le [v-r_p^*]p+vq$.

Assume for the sake of contradiction that $v_1\in S\cup P^-$ for such $v_1$. Since $v_1$ is low by 
assumption, we have 
$v_1\in C_{2,1}$ by \dref{largedef} ($S$) and \dref{largedef} ($P^-$).
Thus $[v_1]\leq p_q^*-1$ and $[v_1]p+v_1q<j$. We 
conclude that  $[v_1]p+v_1q<j\leq [v-r_p^*]p+vq$. 
{}From this it follows that
$(v_1-v)q<([v-r_p^*]-[v_1])p$, which implies \[(p-q_p^*)q\leq r_p^*q\leq (v_1-v)q<([v-r_p^*]-[v_1])p.\] Using \lref{pqinv} this leads to a contradiction, since we also have \[([v-r_p^*]-[v_1])p\leq [v-r_p^*]p\leq(p_q^*-1)p=(p-q_p^*)q+1-p,\] completing the proof.
\end{proof}
\end{lemma}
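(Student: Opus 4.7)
The plan is to mirror the proof of \lref{rp*+} almost verbatim, with the roles of $P^+$ and $P^-$ exchanged and the shift by $r_p^*$ attached to the other side. First I would unpack the hypothesis on $v$: inspecting \dref{largedef}, a low element of $S$ must lie in $C_{1,1}\cap C_{2,1}$, and a low element of $P^+$ must lie in $\{v\in C_{1,1}:[v]\ge p_q^*\}$. Either way $v\in C_{1,1}$, which yields $[v-r_p^*]\le p_q^*-1$ and $j\le [v-r_p^*]p+vq$.

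Next I would argue by contradiction. Suppose some $v_1$ with $v+r_p^*\le v_1\le q_p^*-1$ lies in $S\cup P^-$. Since $v_1$ is low, a second look at \dref{largedef} forces $v_1\in C_{2,1}$, hence $[v_1]\le p_q^*-1$ and $[v_1]p+v_1q<j$. Chaining the two bounds on $j$ and rearranging produces
\begin{equation*}
(v_1-v)q<([v-r_p^*]-[v_1])p.
\end{equation*}

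The last step is to push both sides to a contradiction. On the left, $v_1-v\ge r_p^*\ge p-q_p^*$ by \eqref{inequ}, so the left-hand side is at least $(p-q_p^*)q$. On the right, $[v-r_p^*]-[v_1]\le [v-r_p^*]\le p_q^*-1$, and \lref{pqinv} in the form $pp_q^*=pq+1-qq_p^*$ rewrites $(p_q^*-1)p$ as $(p-q_p^*)q+1-p$. Combining these gives $(p-q_p^*)q<(p-q_p^*)q+1-p$, i.e.\ $p<1$, which is absurd.

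The proof offers little conceptual resistance once the correct $C$-membership is identified. The one spot where it is easy to slip is the bookkeeping: for \lref{rp*+} the hypothesis placed $v$ in $C_{2,1}$, but here the swap from $P^-$ to $P^+$ puts $v$ instead in $C_{1,1}$, so the shift by $r_p^*$ now appears in the bound on $j$ from above rather than from below, and it is $v_1-v$ (not $v-v_1$) that inherits the lower bound $r_p^*$. Keeping this dual structure straight is the only real obstacle.
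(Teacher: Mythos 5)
Your proposal is correct and follows the paper's own argument essentially verbatim: both extract $v\in C_{1,1}$ and $v_1\in C_{2,1}$ from Definition \ref{largedef}, chain the two bounds on $j$ to get $(v_1-v)q<([v-r_p^*]-[v_1])p$, and derive the contradiction from $v_1-v\ge r_p^*\ge p-q_p^*$ together with $(p_q^*-1)p=(p-q_p^*)q+1-p$ via Lemma \ref{pqinv}. Your closing remark about the dual bookkeeping relative to Lemma \ref{rp*+} is exactly the right point of care.
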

\par 
If $v\in S\cup P$ (recall
Definition \ref{largedef}) 
we can relate $[v]$ and $[v-r_p^*]$. 
Our goal, however, is to obtain a contradiction to
the conclusion of \lref{S-N}. 
This  can only be reached with further information about $[v]$ and $[v-r_p^*]$ 
and their difference.  
For ease of notation we make the
following definition.
\begin{definition}
\label{hqdef}
(of $h(v)$ and $h_q(v)$).
We put 
$$h(v)=[v]-[v-r_p^*].$$
The unique integer $0\le \beta<q$ such that 
$h(v)\equiv \beta\,(\mmod{q})$
we denote by $h_q(v)$.
\end{definition}
We will need a technical lemma. 
\begin{lemma} \label{modchi} 
Given any integer $0 \le v \le p-1$, one of the following holds true.\\

\noindent {\rm (a)} There exists some $\overline{q_p}\le k\le p-1$ such that
\begin{equation}
    \label{longq_p}
([v]p+vq)r+p+q \equiv i+1+k \equiv ([v-r_p^*]p+vq)r+p+\overline{q_p} \,(\mmod{pq}),
\end{equation}
which implies that
\begin{equation}
\label{q_p}
h(v)\,pr \equiv \overline{q_p}-q \,(\mmod{pq}).
\end{equation}
{\rm (b)} There exists some 
$0\le k\le \overline{q_p}-1$ such that
\begin{equation}
\label{longp-q_p}
([v]p+vq)r+p+q \equiv i+1+k \equiv ([v-r_p^*]p+vq)r+p+(\overline{q_p}-p)\; (\mmod{pq}),
\end{equation}
which implies that
\begin{equation}
\label{p-q_p}
h(v)\,pr \equiv \overline{q_p}-q-p \,(\mmod{pq}).
\end{equation}
\begin{proof}
By \dref{u}, we have 
\begin{equation} \label{mequiv}
([v]p+vq)r+p+q \equiv i+1+k_1 \,(\mmod{pq})\text{~with~}k_1\in I_p,
\end{equation} 
By the same definition, but this time applied to $v-r_p^*$, we
obtain 
\begin{equation} \label{nequiv}
([v-r_p^*]p+vq)r+p \equiv i+1+k_2 \,(\mmod{pq})\text{~with~}k_2\in I_p,
\end{equation} 
on noting that $r_p^*rq\equiv q(\mmod{pq})$.
Therefore, we have 
\begin{equation} \label{m-nequiv}
h(v)\,pr+q \equiv k_1-k_2 \,(\mmod{pq}).
\end{equation}
We infer that $q \equiv k_1-k_2 \,(\mmod{p})$.
Note that $k_1-k_2\in [-(p-1),\; p-1]$, an interval 
of length smaller than $2p$. Therefore 
either  $k_1-k_2=\overline{q_p}$ or  $k_1-k_2=\overline{q_p}-p$.
From (\ref{mequiv}), (\ref{nequiv}) and the new information for (\ref{m-nequiv}) we get (\ref{q_p}), respectively  (\ref{p-q_p}). In case $k_1-k_2=\overline{q_p}$ we have $k_1 \geq \overline{q_p}$. In case 
$k_1-k_2=\overline{q_p}-p$, we have $k_1 \leq \overline{q_p}-1$. 
On taking $k=k_1$ the proof is completed.
\end{proof}
\end{lemma}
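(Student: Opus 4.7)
The plan is to apply Definition \ref{u} twice, once to $v$ and once to $v - r_p^*$, and then compare the two resulting congruences modulo $pq$. Applied to $v$, the definition immediately yields an integer $k_1 \in I_p$ such that
\[
([v]p+vq)r + p + q \equiv i+1+k_1 \pmod{pq},
\]
which is precisely relation \eqref{mequiv}. This part is direct.

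The harder bookkeeping is to apply Definition \ref{u} at $v - r_p^*$ and then re-express the result in terms of $vq$ (rather than $(v-r_p^*)q$), so that one can meaningfully subtract the two congruences. The key identity I would use is $r_p^* r \equiv 1 \pmod p$, which upgrades to $r_p^* r q \equiv q \pmod{pq}$. This lets me rewrite $([v-r_p^*]p + (v-r_p^*)q)r + p + q$ modulo $pq$ as $([v-r_p^*]p + vq)r + p$, giving \eqref{nequiv} with some $k_2 \in I_p$. This is the one nonobvious manipulation in the argument, and I expect it to be the main obstacle — though only insofar as one must not confuse the ``shifted'' and ``unshifted'' forms of the definition.

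Once both congruences are in hand, subtraction gives
\[
h(v)\,pr + q \equiv k_1 - k_2 \pmod{pq},
\]
which is \eqref{m-nequiv}. Reducing modulo $p$ yields $k_1 - k_2 \equiv q \pmod p$, so $k_1 - k_2$ must be congruent to $\overline{q_p}$ modulo $p$. Because $k_1, k_2 \in I_p$, the difference $k_1 - k_2$ lies in the interval $[-(p-1), p-1]$, of total length less than $2p$, and hence admits exactly two possible values in the residue class of $\overline{q_p}$: either $k_1 - k_2 = \overline{q_p}$ or $k_1 - k_2 = \overline{q_p} - p$.

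Finally I would match the two cases to the statement of the lemma. In the first case, $k_1 = k_2 + \overline{q_p} \ge \overline{q_p}$, and substituting back into the congruence for $h(v)\,pr$ gives \eqref{q_p}; setting $k = k_1$ and combining with \eqref{nequiv} yields \eqref{longq_p}, so case (a) holds. In the second case, $k_1 = k_2 + \overline{q_p} - p \le \overline{q_p} - 1$, which similarly produces \eqref{p-q_p} and \eqref{longp-q_p}, giving case (b). This exhausts the possibilities and completes the proof.
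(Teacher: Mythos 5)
Your proposal matches the paper's proof essentially step for step: applying Definition \ref{u} at $v$ and at $v-r_p^*$, using $r_p^*rq\equiv q\pmod{pq}$ to re-express the second congruence in terms of $vq$, subtracting, reducing modulo $p$, and splitting into the two cases $k_1-k_2=\overline{q_p}$ or $k_1-k_2=\overline{q_p}-p$. This is correct and takes the same route; there is nothing to add.
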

\begin{corollary} \label{2modq}
For any integer $0 \le v \le p-1$, there are two possible remainders modulo $q$ for the 
difference $h(v)=[v]-[v-r_p^*]$, that is $\left|\,\{h_q(v):0\le v\le p-1\}\,\right|\le 2$.
\end{corollary}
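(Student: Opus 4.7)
The plan is to deduce this corollary directly from Lemma \ref{modchi}, which has already done essentially all the work. For any $v\in[0,p-1]$, Lemma \ref{modchi} forces one of two mutually exclusive congruences on $h(v)\cdot pr$ modulo $pq$, namely \eqref{q_p} in case (a) or \eqref{p-q_p} in case (b). Reducing each of these modulo $q$ kills the $pq$ ambiguity and yields one of two explicit congruences for $h(v)\cdot pr$ modulo $q$.

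The second step is to observe that $\gcd(pr,q)=1$, since $p,q,r$ are distinct primes. Hence $pr$ is invertible modulo $q$, and each of the two possibilities for $h(v)\cdot pr\pmod{q}$ determines $h(v)\pmod{q}$, i.e.\ $h_q(v)$, uniquely. Concretely, letting $(pr)^{*}$ denote the inverse of $pr$ modulo $q$, one obtains the two candidate values
\begin{equation*}
h_q(v)\in\bigl\{(\overline{q_p}-q)(pr)^{*}\bmod q,\ (\overline{q_p}-q-p)(pr)^{*}\bmod q\bigr\}.
\end{equation*}
Since every $v$ falls into case (a) or case (b), the set $\{h_q(v):0\le v\le p-1\}$ is contained in this two-element set, which is exactly the assertion $|\{h_q(v):0\le v\le p-1\}|\le 2$.

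There is no genuine obstacle: the corollary is really just a restatement of Lemma \ref{modchi} after reducing modulo $q$ and using invertibility of $pr$. The only thing to be slightly careful about is to note that the two cases of the lemma are exhaustive (which was established at the end of its proof by the interval argument $k_1-k_2\in[-(p-1),p-1]$ forcing $k_1-k_2\in\{\overline{q_p},\overline{q_p}-p\}$), so that no other residue class can arise.
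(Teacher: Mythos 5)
Your proposal is correct and is essentially identical to the paper's own (very short) proof: both deduce the claim from Lemma \ref{modchi} by noting that each of the two congruences \eqref{q_p} and \eqref{p-q_p} determines $h(v)$ uniquely modulo $q$, which is exactly the invertibility of $pr$ modulo $q$ that you make explicit. Your added remark that the two cases are exhaustive is already built into the statement of the lemma, so nothing further is needed.
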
 
This is a simple 
consequence of \lref{modchi}. Either 
$h(v)$ satisfies the congruence
\eqref{q_p} or the congruence \eqref{p-q_p} and each of
these equations has an 
unique solution $h(v)$ modulo $q$. Note that
the two solutions are distinct.
\par Since there is at least one low special and one high special integer, the next lemma in combination with
Corollary \ref{2modq} shows that 
$$\left|\,\{h_q(v):0\le v\le p-1\}\,\right|=2.$$ 
\begin{lemma}\label{chiclass}
Let $v$ be a low special integer and $v'$ be a high special integer. 
Then $h_q(v)\ne h_q(v')$.
\begin{proof}
Assume the two terms are congruent. 
By \lref{lowineq} and \lref{highineq}, $[v]<[v-r_p^*]$ and $[v'-r_p^*]<[v']$. Hence, $0<[v-r_p^*]+[v']-[v]-[v'-r_p^*]$, which is divisible by $q$ by assumption and thus at least $q$. On the other hand, since $v\in C_{1,1}$ and $v'\in C_{2,2}$, we also have
$$h(v')-h(v)=[v-r_p^*]+[v']-[v]-[v'-r_p^*] \leq (p_q^*-1)+(q-1)-0-p_q^* = q-2 <q,$$
which is a contradiction. 
\end{proof}
\end{lemma}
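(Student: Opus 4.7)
The plan is to argue by contradiction. Suppose $h_q(v)=h_q(v')$, so that $q\mid h(v')-h(v)$. The strategy is then to pin $h(v')-h(v)$ inside the open interval $(0,q)$, contradicting this divisibility. The key observation is that the hypotheses on $v$ and $v'$ simultaneously force $h$ to have opposite signs at the two points and constrain the four relevant values of $[\cdot]$ on opposite sides of the barrier $p_q^*$.

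For positivity, I would invoke \lref{lowineq} on the low special integer $v$, which delivers $[v]<[v-r_p^*]$ and hence $h(v)<0$, together with \lref{highineq} on the high special integer $v'$, which delivers $[v'-r_p^*]<[v']$ and hence $h(v')>0$. Subtracting gives $h(v')-h(v)>0$, so in particular $h(v')-h(v)\ge q$ if the divisibility assumption is to hold.

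For the upper bound I would use the \emph{other halves} of those same two lemmas: $[v-r_p^*]\le p_q^*-1$ (from \lref{lowineq}) and $[v'-r_p^*]\ge p_q^*$ (from \lref{highineq}). Together with the trivial bounds $[v]\ge 0$ and $[v']\le q-1$ coming from $[\cdot]\in\{0,\ldots,q-1\}$, this yields
\[
h(v')-h(v)=[v']-[v'-r_p^*]-[v]+[v-r_p^*]\le (q-1)-p_q^*-0+(p_q^*-1)=q-2<q,
\]
which contradicts $q\mid h(v')-h(v)>0$.

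I do not expect a real obstacle. The only thing one needs to notice is that Lemmas \ref{lowineq} and \ref{highineq} are double inequalities: each bundles a sign statement for $h$ with a one-sided barrier at $p_q^*$ for the shifted index. The sign halves drive the positivity step, and the $p_q^*$ halves drive the strict inequality $<q$, so the two pieces fit together with no further input.
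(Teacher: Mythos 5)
Your proof is correct and follows essentially the same route as the paper: both establish $0<h(v')-h(v)$ from the sign halves of Lemmas \ref{lowineq} and \ref{highineq}, then bound $h(v')-h(v)\le q-2$ using $[v-r_p^*]\le p_q^*-1$, $[v'-r_p^*]\ge p_q^*$, and the trivial bounds on $[v]$ and $[v']$, contradicting divisibility by $q$. The only cosmetic difference is that the paper attributes the $p_q^*$-barrier bounds directly to $v\in C_{1,1}$ and $v'\in C_{2,2}$, whereas you cite the same facts through the packaged Lemmas \ref{lowineq} and \ref{highineq}.
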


\begin{lemma} \label{sameclass}
Let $v_1,v_2$ be two special integers. Then
$h(v_1)=h(v_2)$ if and only if 
$v_1$ and $v_2$ are both low or both high integers. 
\end{lemma}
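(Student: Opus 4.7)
The plan is to combine \cref{2modq} (at most two residues modulo $q$ occur among the values $h_q(v)$), \lref{chiclass} (a low-special and a high-special integer must lie in different residue classes), and the strict inequalities of \lref{lowineq} and \lref{highineq} in order to pin $h(v)$ down exactly on each of the two classes.

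The forward direction is immediate. If $v_1$ is low-special and $v_2$ is high-special (or vice versa), then \lref{chiclass} gives $h_q(v_1)\neq h_q(v_2)$, and hence $h(v_1)\neq h(v_2)$. Contrapositively, $h(v_1)=h(v_2)$ forces $v_1$ and $v_2$ to be of the same type.

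For the converse I would assume first that both $v_1,v_2$ are low-special. As noted in the paragraph preceding \lref{chiclass}, at least one high-special integer exists; fix such an integer $v'$. Now \cref{2modq} bounds $|\{h_q(v):0\leq v\leq p-1\}|\leq 2$, while \lref{chiclass} gives $h_q(v')\neq h_q(v_1)$ and $h_q(v')\neq h_q(v_2)$. Pigeonholing three residues into two buckets forces $h_q(v_1)=h_q(v_2)$. Then \lref{lowineq} locates $h(v_1),h(v_2)$ in $[-(p_q^*-1),-1]$, an interval whose extreme spread $p_q^*-2$ is strictly less than $q$; two integers congruent modulo $q$ at a distance less than $q$ must coincide, so $h(v_1)=h(v_2)$. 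The case where both $v_1,v_2$ are high-special is handled identically, using \lref{highineq} to confine $h(v)$ to $[1,q-1-p_q^*]$, again an interval of length less than $q$.

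I do not anticipate a genuine obstacle. The only point that needs explicit verification is the length estimate on each class's interval, which is immediate from the $C$-set bounds $0\leq [v],[v-r_p^*]\leq p_q^*-1$ in the low case and $p_q^*\leq [v],[v-r_p^*]\leq q-1$ in the high case, combined with the strict monotonicity supplied by \lref{lowineq} and \lref{highineq}. With those bounds in hand the congruence $h_q(v_1)=h_q(v_2)$ automatically upgrades to the equality $h(v_1)=h(v_2)$ that the lemma requires.
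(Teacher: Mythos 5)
Your proof is correct and follows essentially the same approach as the paper: combine \lref{chiclass} with \cref{2modq} to conclude $h_q(v_1)=h_q(v_2)$ when $v_1,v_2$ are of the same type, and then use \lref{lowineq} (resp.\ \lref{highineq}) to confine $h(v_1),h(v_2)$ to an interval of length less than $q$, upgrading the congruence to an equality. The only difference is cosmetic: you quote the tighter intervals $[-(p_q^*-1),-1]$ and $[1,q-1-p_q^*]$, whereas the paper is content with the looser bounds $-q<h<0$ and $0<h<q$.
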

\begin{proof} The weaker result with identity replaced by being congruent modulo $q$ follows from 
Lemma \ref{chiclass} and Corollary \ref{2modq}.

For low special integers $v_1$ and $v_2$ we
have  $-q<h(v_i)<0$ by \lref{lowineq}. 
Since the two 
expressions for $i=1$ and $i=2$ are congruent mod $q$, they have to be equal. 
For high special integers $v_1$ and $v_2$ we have
$0<h(v_i)<q$ by \lref{highineq}. 
Again the two expressions are congruent mod $q$ and thus equal.
\end{proof}
\begin{lemma} \label{s-shift-p}
	Suppose that $v' \in S \cup P^-$ is high 
	and $v'-r_p^* \in P$. 
	Let $v_1$ be a low special integer.
	Then 
	$v'-r_p^* \in P^+$ and
	$h_q(v'-r_p^*)\ne h_q(v_1)$. 
\end{lemma}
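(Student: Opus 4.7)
The plan is to set $w = v' - r_p^*$ and prove the two assertions in sequence: first $w \in P^+$, then $h_q(w) \ne h_q(v_1)$. I would begin by establishing that $w$ is a low integer. Using the chain \eqref{inequ}, and in particular $q_p^* + r_p^* \ge p$, we have $w = v' - r_p^* \le p - 1 - r_p^* < q_p^*$, while $w > 0$ since $v' \ge q_p^* > r_p^*$. Since both $v' \in S$ and $v' \in P^-$ force $v' \in C_{2,2}$ (because $v'$ is high), we get $[v' - r_p^*] \ge p_q^*$, and because $[\,\cdot\,]$ depends on its argument only modulo $p$, this equals $[w]$.

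To deduce $w \in P^+$ I would argue by elimination: any low integer in $S$ or $P^-$ must lie in $C_{2,1}$, which requires $[w] \le p_q^* - 1$, contradicting $[w] \ge p_q^*$. Since $w \in P$ is given and the sets $S, P^+, P^-, N$ are pairwise disjoint, $w$ must lie in $P^+$. Unpacking this membership via \dref{largedef} gives the $C_{1,1}$ conditions $[w - r_p^*] \le p_q^* - 1$ and $j \le [w - r_p^*]p + wq$, which will be needed in the final step.

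For the second assertion the main computation is to combine the $C_{2,2}$ inequality for $v'$ (rewritten using $v' q = wq + r_p^* q$) with the $C_{1,1}$ inequality for $w$; after cancellation this collapses to $h(w)\,p < (p - r_p^*)q$. The step I expect to be most delicate is sharpening this to $h(w) \le q - p_q^*$: using \lref{pqinv} together with $r_p^* \ge p - q_p^*$ from \eqref{inequ} one checks $qr_p^* \ge pp_q^* - 1$, hence $(p - r_p^*)q \le p(q - p_q^*) + 1$, and integrality of $h(w)\,p$ then forces $h(w) \le q - p_q^*$. Finally, \lref{lowineq} gives $h(v_1) \in [-(p_q^* - 1), -1]$ and so $h_q(v_1) \in [q - p_q^* + 1, q - 1]$, while $h(w) \ge p_q^* - (p_q^* - 1) = 1$ together with the bound just established places $h_q(w) = h(w)$ in $[1, q - p_q^*]$ — strictly below the range of $h_q(v_1)$, completing the proof.
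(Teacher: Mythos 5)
Your proof is correct and follows essentially the same route as the paper: elimination via the $C_{2,1}$/$C_{2,2}$ conditions to get $v'-r_p^*\in P^+$, then combining the $j$-inequalities from $C_{2,2}$ (for $v'$) and $C_{1,1}$ (for $v'-r_p^*$) with \lref{pqinv} and $p-q_p^*\le r_p^*$ from \eqref{inequ}. The only cosmetic difference is that you place $h_q(v'-r_p^*)$ and $h_q(v_1)$ in the disjoint intervals $[1,q-p_q^*]$ and $[q-p_q^*+1,q-1]$, whereas the paper bounds the difference $h(v'-r_p^*)-h(v_1)$ directly into $[1,q-1]$; the underlying inequalities are identical.
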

\begin{proof}
	 By \dref{largedef} ($S$) and ($P^-$), and since $v'$ is high, we have  $v'\in C_{2,2}$ and hence $[v'-r_p^*]p+(v'-p)q < j$ and $p_q^*\le [v'-r_p^*]$. Thus from $v'-r_p^*<p-r_p^* \le q_p^*$ being a low integer, we infer that $v'-r_p^* \not\in P^-$ (since $v'-r_p^* \in P^-$ would imply $v\in C_{2,1}$ contradicting $p_q^*\leq [v'-r_p^*]$). Therefore, $v'-r_p^* \in P^+$ and so $v'-r_p^*\in C_{1,1}$, implying $[v'-2r_p^*]\leq p_q^*-1<[v'-r_p^*]$. Since  $0\leq [v_1]<[v_1-r_p^*]\leq p_q^*-1$ by \lref{lowineq}, we conclude that $[v_1-r_p^*]-[v'-2r_p^*]-[v_1]+[v'-r_p^*]>0$.
	 
	 Since $v'-r_p^*\in C_{1,1}$ and $v'\in C_{2,2}$, we have $[v'-r_p^*]p+v'q-pq<j\leq [v'-2r_p^*]p+(v'-r_p^*)q$, and so $p([v'-r_p^*]-[v'-2r_p^*])<(p-r_p^*)q$.
	 Combining the various inequalities, we deduce 
	\begin{align*} 
	0&<([v_1-r_p^*]-[v_1]-[v'-2r_p^*]+[v'-r_p^*])p < (p_q^*-1)p + (p-r_p^*)q \\ &\leq p(p_q^*-1)+qq_p^* = pq-p+1<pq.
	\end{align*}
	Hence,  the numbers $h(v_1)$ and $h(v'-r_p^*)$ 
	cannot be congruent modulo $q$, as their difference 
	lies in the interval $[1,q-1]$.
\end{proof}
\begin{lemma} \label{s-shift-p.c}
Suppose that $v'\in S$ is high and $v'-r_p^*\notin N$. Then $v'-r_p^*\in P^+$ and $h(v')= h(v'-r_p^*)$. Moreover, $h_q(v')=h(v')$ and  
$h_q(v'-r_p^*)=h(v'-r_p^*)$.
\begin{proof}
As $v'-r_p^*<p-r_p^*\leq q_p^*$, we conclude that $v'-r_p^*$ is low.

Assume first that $v'-r_p^*\in S$. By \dref{largedef} ($S$), $v'-r_p^*\in C_{2,1}$, so $[v'-r_p^*]\leq p_q^*-1$. Since $v'$ is high and special, $v'\in C_{2,2}$, so 
$[v'-r_p^*]\ge p_q^*$, a contradiction.

Hence, $v'-r_p^*\notin S$, so $v'-r_p^*\in P$. By \lref{s-shift-p} and \cref{2modq}, we know $v'-r_p^*\in P^+$ with $h(v'-r_p^*)=h(v')~(\mmod{q})$. We now show 
that these two numbers are equal. We have $h(v')\in [0,q-1]$ by \lref{highineq}. 
Further, $[v'-2r_p^*]\leq p_q^*-1<[v'-r_p^*]$ 
by \dref{largedef} ($P^+$), and so also $h(v'-r_p^*)\in [0,q-1]$. The final
statement follows by \dref{hqdef}.
\end{proof}
\end{lemma}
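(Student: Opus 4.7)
The plan is to proceed in three phases: locate $v'-r_p^*$ on the low/high scale, eliminate the two unwanted set-memberships so that $v'-r_p^*\in P^+$ is forced, and finally upgrade a congruence mod $q$ to an equality by pinning down both $h$-values to $[1,q-1]$.

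First I would observe that $v'-r_p^*$ is a low integer. Indeed, since $v'\le p-1$ the chain \eqref{inequ} gives $v'-r_p^*\le p-1-r_p^*<p-r_p^*\le q_p^*$, while $v'\ge q_p^*\ge p-r_p^*>r_p^*$ (using $r_p^*<p/3$ from \eqref{rp3}) ensures $v'-r_p^*\ge 0$. Since $v'-r_p^*\notin N$, the only remaining possibilities are $v'-r_p^*\in S$, $v'-r_p^*\in P^+$ or $v'-r_p^*\in P^-$. I would rule out $v'-r_p^*\in S$ directly: if $v'-r_p^*$ were low and special, \lref{lowineq} would give $[v'-r_p^*]\le p_q^*-1$, but $v'\in S$ being high forces $v'\in C_{2,2}$ by \dref{largedef} and hence $[v'-r_p^*]\ge p_q^*$, a contradiction. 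Hence $v'-r_p^*\in P$, and then \lref{s-shift-p}, applied with any low special integer $v_1$ (which exists by the remark preceding \nref{defv0}), simultaneously yields $v'-r_p^*\in P^+$ and $h_q(v'-r_p^*)\ne h_q(v_1)$.

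The remaining task is to prove $h(v')=h(v'-r_p^*)$. By \lref{highineq} applied to the high special integer $v'$, we have $p_q^*\le[v'-r_p^*]<[v']$, so $h(v')\in[1,q-1]$. Because $v'-r_p^*$ is low and lies in $P^+$, \dref{largedef} ($P^+$) places it in $C_{1,1}$ with $[v'-r_p^*]\ge p_q^*$ and simultaneously forces $[v'-2r_p^*]\le p_q^*-1$; therefore $h(v'-r_p^*)=[v'-r_p^*]-[v'-2r_p^*]$ also lies in $[1,q-1]$. Now \cref{2modq} tells us that the residues $h_q(v)$ take at most two values as $v$ ranges over $[0,p-1]$; \lref{chiclass} gives $h_q(v_1)\ne h_q(v')$ (since $v_1$ is low special and $v'$ is high special), and \lref{s-shift-p} gives $h_q(v_1)\ne h_q(v'-r_p^*)$. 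The only way this is consistent with only two available residues is $h_q(v')=h_q(v'-r_p^*)$, i.e.\ $h(v')\equiv h(v'-r_p^*)\pmod q$. Combined with the fact that both sides live in $[1,q-1]$, this congruence is an equality. The concluding identities $h_q(v')=h(v')$ and $h_q(v'-r_p^*)=h(v'-r_p^*)$ are then immediate from \dref{hqdef}, since both $h$-values already lie in $[0,q-1]$.

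The main obstacle, as I see it, is the bookkeeping rather than any deep estimate: one must carefully keep track of which $C$-set each of $v'$, $v'-r_p^*$, and the auxiliary $v_1$ belongs to, extract the right one-sided bound on $[\,\cdot\,]$ each time, and orchestrate \cref{2modq}, \lref{chiclass} and \lref{s-shift-p} together so that the two-residue pigeonhole forces exactly the desired equality of residues. Everything else is a direct consequence of the definitions.
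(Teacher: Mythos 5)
Your proof is correct and follows essentially the same route as the paper's: rule out $v'-r_p^*\in S$ via the clash between $[v'-r_p^*]\le p_q^*-1$ and $[v'-r_p^*]\ge p_q^*$, obtain $v'-r_p^*\in P^+$ from \lref{s-shift-p}, and lift the mod-$q$ congruence to an equality by bounding both $h$-values into $[1,q-1]$. Your only addition is to make explicit the role of \lref{chiclass} in the two-residue pigeonhole, which the paper leaves implicit.
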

\begin{lemma} \label{s-shift-p.2}
Suppose that $v' \in S \cup P^+$ is high, and 
$v'+r_p^*-p \in P$. Then 
$v'+r_p^*-p \in P^-$ and
$h_q(v'+r_p^*-p)\ne h_q(v_0)$. 
\begin{proof}
The proof is similar to that given in \lref{s-shift-p}. Since $v'\in S\cup P^+$ is high, $v'\in C_{1,2}$, so $[v']\geq p_q^*$ and $[v']p+v'q-pq\geq j$. Since $v'+r_p^*-p\geq v'+p-q_p^*-p\geq 0$ and $v'+r_p^*-p\leq p-1+r_p^*-p<q_p^*$, we
conclude that $v'+r_p^*-p$ is low.

Assume for a contradiction that $v'+r_p^*-p\in P^+$. Then 
$v'+r_p^*-p\in C_{1,1}$ by \dref{largedef} ($P^+$). Thus, $[(v'+r_p^*-p)-r_p^*]=[v'-p]=[v']\le p_q^*-1$, a contradiction. Hence, $v'+r_p^*-p\in P^-$.
\par By \dref{largedef} ($P^-$), $v'+r_p^*-p\in C_{2,1}$. Hence, $[v'+r_p^*-p]p+(v'+r_p^*-p)q<j\leq [v']p+v'q-pq$, or, equivalently, $p([v']-[v'+r_p^*-p])>r_p^*q$. Since $[v_0]<[v_0-r_p^*]$ and $[v'+r_p^*-p]\le p_q^*-1<[v']$ by \lref{lowineq}, it follows that 
both $[v_0]-[v_0-r_p^*]$ and $[v'+r_p^*-p]-[v']$ lie in the interval $[-q+1,-1]$. Thus, if they are congruent modulo $q$, they have to be equal. However, 
$$p([v_0-r_p^*]-[v_0]+[v'+r_p^*-p]-[v'])<p(p_q^*-1)-r_p^*q,$$ and the proof is
finished on noticing that, by \lref{pqinv} and \eqref{inequ}, 
\[p(p_q^*-1)-r_p^*q=
pq+1-qq_p^*-p-r_p^*q<q(p-q_p^*-r_p^*)\leq 0.\qedhere\]
\end{proof}
\end{lemma}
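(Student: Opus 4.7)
The approach mirrors that of \lref{s-shift-p}, with the roles of $P^+$ and $P^-$ interchanged. First I would verify that $w:=v'+r_p^*-p$ is low: since $v'\in[q_p^*,p-1]$, we have $w\in[q_p^*+r_p^*-p,\,r_p^*-1]$, and by \eqref{inequ} and \eqref{rp3} both endpoints lie in $[0,q_p^*-1]$.

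Next I would show $w\in P^-$. From $v'\in S\cup P^+$ being high, \dref{largedef} forces $v'\in C_{1,2}$, so $[v']\ge p_q^*$ and $[v']p+v'q-pq\ge j$. If one had $w\in P^+$, then being low, $w$ would lie in $C_{1,1}$, giving $[w-r_p^*]\le p_q^*-1$. But $w-r_p^*=v'-p$, so $[w-r_p^*]=[v']\ge p_q^*$, a contradiction. Hence $w\in P^-$, and being low, $w\in C_{2,1}$, giving $[w]p+wq<j$. Combining with the $C_{1,2}$ inequality above and using $wq=v'q+r_p^*q-pq$ yields $p([v']-[w])>r_p^*q$.

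To show $h_q(w)\ne h_q(v_0)$, I would observe that by \lref{lowineq} applied to the low special integer $v_0$ we have $[v_0]<[v_0-r_p^*]\le p_q^*-1$, so $h(v_0)=[v_0]-[v_0-r_p^*]\in[-q+1,-1]$; and that $h(w)=[w]-[w-r_p^*]=[w]-[v']$ also lies in $[-q+1,-1]$, using $[w]\le p_q^*-1<p_q^*\le[v']$. Hence congruence modulo $q$ would force the two values to be equal, i.e.\ $[v_0-r_p^*]-[v_0]+[w]-[v']=0$. But combining the bound $[v_0-r_p^*]-[v_0]\le p_q^*-1$ with $p([w]-[v'])<-r_p^*q$ gives
\[
p\bigl([v_0-r_p^*]-[v_0]+[w]-[v']\bigr)<p(p_q^*-1)-r_p^*q.
\]
By \lref{pqinv} and \eqref{inequ}, the right-hand side equals $q(p-q_p^*-r_p^*)+1-p\le 1-p<0$, ruling out the equality.

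The main obstacle will be the algebraic juggling invoking \lref{pqinv} to rewrite $pp_q^*$ in terms of $qq_p^*$ so that \eqref{inequ} can be applied; once that identity is unpacked, the chain of inequalities collapses cleanly. Everything else amounts to carefully tracking which $C$-set each integer lives in, particularly noting that $[w-r_p^*]=[v']$ because $[\cdot]$ depends only on the residue modulo $p$.
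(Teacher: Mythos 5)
Your proposal is correct and follows essentially the same route as the paper's proof: establish that $w=v'+r_p^*-p$ is low, rule out $w\in P^+$ via the $C_{1,1}$ condition and $[w-r_p^*]=[v']\ge p_q^*$, extract $p([v']-[w])>r_p^*q$ from the two $j$-inequalities, and then show both $h$-values lie in $[-q+1,-1]$ so that congruence modulo $q$ would force an equality contradicted by the \lref{pqinv} computation. The final algebraic step matches the paper's verbatim, so there is nothing to add.
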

\begin{lemma} \label{+shift}
Assume $r_p^*+q_p^*=p$. If $q_p^* \le v' \le p-1$ and $v' \in P$ with $v'+r_p^*-p \in S$, then $h_q(v_0)=h_q(v')$.
\begin{proof}
As in the previous lemma, $v'+r_p^*-p\leq p-1+r_p^*-p<q_p^*$, so $v'+r_p^*-p$ is low. By \dref{largedef} ($S$), $v'+r_p^*-p\in C_{1,1}$, 
so $[(v'+r_p^*-p)-r_p^*]=[v'-p]=[v']\le p_q^*-1$. If $v'\in P^+$, 
then $v'\in C_{1,2}$ by \dref{largedef} ($P^+$), and hence $[v']\ge p_q^*$, contradicting the above inequality. Hence, $v'\in P^-$ and $v'\in C_{2,2}$, so $[v'-r_p^*]p+v'q-pq<j$. Since $v'+r_p^*-p\in C_{1,1}$, we have $j\leq [v'-p]p+(v'+r_p^*-p)q=[v']p+(v'+r_p^*-p)q$. Combining these inequalities for $j$, we deduce $[v'-r_p^*]p+v'q-pq<[v']p+(v'+r_p^*-p)q$, 
which simplifies to $p\,h(v')>-r_p^*q$.
By \dref{largedef} ($P^-$) and the definition of $C_{2,2}$, $v'\in C_{2,2}$ being high implies that $[v']\leq p_q^*\leq [v'-r_p^*]$, and so $-q<h(v')\leq 0$.

Let $v$ be a high special integer. If $h_q(v_0)\ne h_q(v')$, then by 
Corollary \ref{2modq} and \lref{sameclass}, we infer
that $h_q(v)=h_q(v')$. By \lref{highineq}, $0<h(v)<q$ and so the only possibility is $h(v)-q=h(v')$. However, then $-r_p^*q<p\,h(v')=p(h(v)-q)$. Since $v$ is high and special, we have $[v-r_p^*]\geq p_q^*$ as $v\in C_{2,2}$, and so $-r_p^*q<p(q-1-p_q^*-q)$.
Since by assumption $r_p^*+q_p^*=p$, the
latter inequality can 
be rewritten as $-(p-q_p^*)q<p(-1-p_q^*)$. 
This on its turn can be rewritten as
$qq_p^*+pp_q^*<pq-p$. As the left-hand side
equals $1+pq$ we have obtained a contradiction.
\end{proof}
\end{lemma}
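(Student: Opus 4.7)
The plan is a proof by contradiction: assume $h_q(v_0)\ne h_q(v')$ and derive a contradiction by combining the positional inequalities forced by the $C$-set memberships of $v'$ and $v'+r_p^*-p$ with the two-class structure from \cref{2modq}.

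First I would locate $v'+r_p^*-p$ in the low range. Using the hypothesis $r_p^*+q_p^*=p$ together with $v'\le p-1$, one gets $v'+r_p^*-p\le r_p^*-1<q_p^*$, so $v'+r_p^*-p$ is low. Since it lies in $S$, it belongs in particular to $C_{1,1}$, so applying \dref{largedef} yields $[v'-p]=[v']\le p_q^*-1$. This pins down where $[v']$ lives.

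Next I would classify $v'$ inside $P$. If $v'\in P^+$, then $v'$ being high puts it in $C_{1,2}$, which forces $[v']\ge p_q^*$, contradicting the bound just obtained. Hence $v'\in P^-$, and being high it lies in $C_{2,2}$, so $[v'-r_p^*]\ge p_q^*$. This squeezes $h(v')=[v']-[v'-r_p^*]$ into the interval $(-q,0)$. Combining the two $j$-inequalities coming from $v'\in C_{2,2}$ (namely $[v'-r_p^*]p+v'q-pq<j$) and from $v'+r_p^*-p\in C_{1,1}$ (namely $j\le [v']p+(v'+r_p^*-p)q$), I would deduce $p\,h(v')>-r_p^*q$.

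Now for the contradiction. By \cref{2modq} only two residues mod $q$ are available for $h$, and by \lref{chiclass} any high special integer $v$ (which exists, as remarked before \nref{defv0}) satisfies $h_q(v)\ne h_q(v_0)$. Under the hypothesis $h_q(v_0)\ne h_q(v')$ this leaves $h_q(v)=h_q(v')$. Since \lref{highineq} gives $h(v)\in(0,q)$ while $h(v')\in(-q,0)$, the only congruence possible is $h(v')=h(v)-q$. Substituting this into $p\,h(v')>-r_p^*q$ and using $h(v)\le q-1-p_q^*$ (which follows from $[v]\le q-1$ and $[v-r_p^*]\ge p_q^*$), I would obtain $-r_p^*q<p(q-1-p_q^*-q)=p(-1-p_q^*)$. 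Finally, applying $r_p^*=p-q_p^*$ and \lref{pqinv} ($pp_q^*+qq_p^*=pq+1$), this inequality collapses to $pq+1<pq-p$, i.e.\ $p+1<0$, which is absurd.

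The main obstacle is the delicate bookkeeping: the hypothesis $r_p^*+q_p^*=p$ is used twice, first to place $v'+r_p^*-p$ in the low range and later, disguised as $r_p^*=p-q_p^*$, in the final arithmetic that dovetails with \lref{pqinv}. Each intermediate bound must be tight enough that the final reduction goes through cleanly; slight slack anywhere would leave the resulting inequality satisfiable and break the contradiction.
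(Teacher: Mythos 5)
Your proposal is correct and follows essentially the same route as the paper's proof: the same placement of $v'+r_p^*-p$ in the low range, the same elimination of $v'\in P^+$ forcing $v'\in P^-\subseteq C_{2,2}$, the same pair of $j$-inequalities yielding $p\,h(v')>-r_p^*q$, and the same residue-count contradiction via $h(v')=h(v)-q$ and \lref{pqinv}. The only cosmetic deviation is that you invoke \lref{chiclass} directly rather than \lref{sameclass} as the paper does, but both supply the needed fact $h_q(v_0)\ne h_q(v)$ for a high special $v$.
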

\begin{definition} \label{f} (of the map $f$).
Define $f: I_p \rightarrow I_p$ by
$$f(v) = 
\begin{cases}
v-r_p^* & \text{if } v \geq r_p^* ; \\
v-r_p^* + p & \text{if } v \leq r_p^*-1.
\end{cases} $$
Obviously, $f$ is a bijection.
\end{definition}
\begin{lemma} \label{shiftclass}
If $v \in S$, then $f(v) \notin S$.
\begin{proof}
For a contradiction assume that $f(v)$ is special.
\par If $f(v)$ and $v$ are both low, 
then by  \cref{bound} either both $v$ and $v-r_p^*$ are in the interval
$[v_0-p+q_p^*+1,v_0]$, or both $v$ and $v-r_p^*-p$ are 
in this interval. In the first case 
we must have $r_p^*<p-q_p^*$, and in the second case $p-r_p^*<p-q_p^*$. 
Both of these inequalities contradict \eqref{inequ}.
\par The integers
$v$ and $f(v)$ cannot be both high integers, since 
then their difference is at most $p-q_p^*-1$, which by
\eqref{inequ} is both smaller than $r_p^*$ and $p-r_p^*$.
\par If one of $v$ and $f(v)$ is high and 
the other low,  
then  by \dref{largedef} ($S$) it would 
follow that $p_q^* \le [v-r_p^*] \le p_q^*-1$, which
is impossible.
\end{proof}
\end{lemma}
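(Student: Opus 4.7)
The plan is to argue by contradiction: assume $v \in S$ and $f(v) \in S$, and split into three cases according to whether each of $v$ and $f(v)$ is low or high. The guiding observation is that $f$ shifts by $-r_p^*$ modulo $p$, so as representatives in $I_p$ the difference $|v - f(v)|$ is either $r_p^*$ or $p - r_p^*$, and by \eqref{inequ} both of these quantities are at least $p - q_p^*$.

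For the first case (both $v$ and $f(v)$ low and special) I would invoke \cref{bound}, which confines every low special integer to the interval $[v_0 - p + q_p^* + 1,\; v_0]$ of exactly $p - q_p^*$ consecutive integers. Any two such integers differ by at most $p - q_p^* - 1$, contradicting the lower bound on $|v - f(v)|$ observed above. For the second case (both high and special), no analogue of \cref{bound} is needed, since all high integers by definition lie in $[q_p^*,\; p-1]$, again an interval of $p - q_p^*$ consecutive integers, and the same size argument applies.

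The third case — one of $v$, $f(v)$ low and the other high — requires switching from a size argument to one that uses the definition of $S$ directly. The key identity is that $[w]$ depends only on $w$ modulo $p$, so that $[f(v)] = [v - r_p^*]$. If $v$ is low special, then $v \in C_{1,1}$ forces $[v - r_p^*] \le p_q^* - 1$, while $f(v)$ being high special forces $f(v) \in C_{1,2}$ and hence $[f(v)] = [v - r_p^*] \ge p_q^*$. The opposite sub-case is symmetric: if $v$ is high special, then $v \in C_{2,2}$ gives $[v - r_p^*] \ge p_q^*$, whereas $f(v)$ being low special and in $C_{1,1}$ gives $[f(v)] = [v - r_p^*] \le p_q^* - 1$. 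In either sub-case the two bounds on $[v - r_p^*]$ are inconsistent.

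I expect the low-low case to be the main conceptual step, since it is the only one that relies on the preparatory work culminating in \cref{bound}. The high-high case is a direct pigeonhole-style size count, and the mixed case is just a short unwinding of the defining membership conditions in $C_{1,1}$, $C_{1,2}$, $C_{2,1}$, $C_{2,2}$ together with the modular invariance of $[\cdot]$.
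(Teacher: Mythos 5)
Your proposal follows the same three-way case split as the paper, and the low--low and high--high cases match its reasoning exactly (phrasing the high--high count via the interval $[q_p^*,p-1]$ containing exactly $p-q_p^*$ integers is the same observation the paper expresses as ``their difference is at most $p-q_p^*-1$''). The only defect is a mis-citation in the second sub-case of the mixed case: you deduce $[f(v)]\le p_q^*-1$ from $f(v)\in C_{1,1}$, but the membership condition in $C_{1,1}$ constrains $[f(v)-r_p^*]$, not $[f(v)]$. The bound $[f(v)]\le p_q^*-1$ instead comes from $f(v)\in C_{2,1}$, which also holds because $f(v)$ is low special ($S$ for low integers is $C_{1,1}\cap C_{2,1}$). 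With that one citation corrected, your argument is exactly the paper's proof.
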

\begin{definition} \label{S0}
(of $S_0$). Let $S_0$ be the set of integers $v \in S$ with $f(v) \in P$.
\end{definition}
For $v \in S \setminus S_0$ we have 
$f(v)\not\in P$ (by definition) and 
$f(v)\not\in S$ (by Lemma \ref{shiftclass}), and hence
$f(v) \in N$. 
\par Now we need pairs of integers from which only one can be in $S_0$. 
\begin{definition} \label{g}
(of the map $g$).
For any integer $v\in [0,p-1]$, we define the function $g$ by
$$g(v) = 2v_0+r_p^*-v.$$
\end{definition}
Note that a priori we might have $g(v)<0$, but the 
corresponding values of $v$ will not play a role in our arguments.
\begin{lemma} \label{2v0+rp*-v}
If $v \in S_0$ and $v \geq q_p^*$, and if also $0 \le 2v_0+r_p^*-v \le v_0$, then $g(v)=2v_0+r_p^*-v$ is
not in $S \cup P^+$.
\begin{proof}
\indent Assume for a contradiction that $g(v)\in S\cup P^+$. Either \eqref{longp-q_p} or \eqref{longq_p} is true for $v_0$. Assume first \eqref{longp-q_p} holds for $v_0$. Then,
\begin{itemize}
	\item $([v_0]p+v_0q)r+p+q \equiv i+1+k_1 \,(\mmod{pq})$ and
	\item $([v_0-r_p^*]p+v_0q)r + \overline{q_p} \equiv i+1+k_1 \,(\mmod{pq})$ with $0\le k_1\le \overline{q_p}-1$.
\end{itemize}
In particular, 
\begin{equation}
\label{redmodp}
v_0qr+q\equiv i+1+k_1\,(\mmod{p}).
\end{equation}
Furthermore, by \dref{u} and noting that $r_p^*rq\equiv q\,(\mmod{pq})$,
\begin{itemize}
	\item $([2v_0-v]p+(2v_0+r_p^*-v)q)r+p \equiv i+1+k_2 \,(\mmod{pq})$ with $0\le k_2\le p-1$.
\end{itemize}
Since by assumption $v\ge q_p^*$, it follows that
$f(v)=v-r_p^*$ as $q_p^*\ge r_p^*$. Since by assumption
$v\in S_0,$ we infer that $f(v)=v-r_p^*\in P$ by the
definition of $S_0$. 
As we assumed that \eqref{p-q_p} holds, it follows by
\lref{s-shift-p} (with $v'=v$ and $v_1=v_0$) that 
\eqref{q_p} (and so \eqref{longq_p}) holds for $v-r_p^*$, and hence
\begin{itemize}
	\item $([v-r_p^*]p+(v-r_p^*)q)r+p+q \equiv i+1+k_3\,(\mmod{pq})$ with $\overline{q_p}\le k_3\le p-1$.
\end{itemize}
Similarly \eqref{longq_p} holds by \lref{sameclass} 
applied to $v$ (which is high and special), giving
\begin{itemize}
	\item $([v-r_p^*]p+vq)r+p+\overline{q_p} \equiv i+1+k_4 \,(\mmod{pq})$ with $\overline{q_p}\le k_4\le p-1$.
\end{itemize}
The last two congruences imply that $q$ divides $k_3-k_4+\overline{q_p}$. 
Clearly $k_3-k_4+\overline{q_p}\in (-p,p)\subseteq (-q,q)$, 
hence $k_3=k_4-\overline{q_p}$ and $k_3 \in [\overline{q_p}, p-1-\overline{q_p}]$. 
\par From the congruences involving 
$k_2$ and $k_3$ and \eqref{redmodp} we get 
\begin{eqnarray*}
2(i+1)+k_3+k_2  & \equiv & ([v-r_p^*]p+(v-r_p^*)q)r+p+q+([2v_0-v]p+(2v_0+r_p^*-v)q)r+p \\ 
& \equiv & 2v_0qr+q\equiv 2(i+1)+2k_1-\overline{q_p}\,(\mmod{p}),
\end{eqnarray*}
showing that $p$ divides $k_3+k_2-2k_1+\overline{q_p}$. 
Since $k_3+k_2-2k_1+\overline{q_p} \leq p-\overline{q_p}-1+p-1  + \overline{q_p} < 2p$ and 
$0<\overline{q_p}-2(\overline{q_p}-1)+\overline{q_p} \le k_3+k_2-2k_1+\overline{q_p}$, it follows that
\begin{equation*}
k_3+k_2-2k_1+\overline{q_p}=p.
\end{equation*}
Using this, we conclude that
\begin{eqnarray*}
 & & ([v-r_p^*]p+(v-r_p^*)q)r+p+q+[2v_0-v]p+(2v_0+r_p^*-v)q)r+p \\ 
& \equiv & i+1+k_3+i+1+k_2 \equiv i+1+k_1 + p  -\overline{q_p} + i+1+k_1 \\ 
& \equiv & ([v_0-r_p^*]p+v_0q)r+\overline{q_p}+p-\overline{q_p}+([v_0]p+v_0q)r+p+q \,(\mmod{pq}), 
\end{eqnarray*}
and thus $q$ divides $[v-r_p^*]+[2v_0-v]-[v_0]-[v_0-r_p^*]$. 
Since, by \dref{largedef} ($S$),  $v_0\in C_{1,1}$ and $v\in C_{2,2}$, 
we have  $[v_0-r_p^*] < p_q^* \le [v-r_p^*]$. 
By assumption, $g(v)\in S\cup P^+$ is low, so $2v_0-v+r_p^*\in C_{1,1}$. Hence, on 
also noting that $v_0\in C_{2,1}$, we see that 
$[v_0]p+v_0q < j \le [2v_0-v]p+(2v_0-v+r_p^*)q < [2v_0-v]p+v_0q$ 
(since $g(v)\leq v_0$ implies $v_0\leq v-r_p^*$), so $[v_0]<[2v_0-v]$. Thus $q$
divides the positive number $[v-r_p^*]+[2v_0-v]-[v_0]-[v_0-r_p^*]$, and so this 
number is $\ge q$. From this we infer that
\begin{equation}
    \label{nogeenongelijkheid}
2[v-r_p^*]\ge 2(q+[v_0-r_p^*]+[v_0]-[2v_0-v])\ge 2(q-p_q^*+1),
\end{equation}
where the final inequality follows on noting that
$[2v_0-v]\le p_q^*-1$ (a consequence of the
fact that $2v_0-v+r_p^*\in C_{1,1}$).
On the other hand, by applying \lref{s-shift-p.c} to $v$ (which is allowed since $v\in S_0$ implies $f(v)\notin N$), we get
\begin{equation} \label{2u_2'}
2[v-r_p^*]= [v]+[v-2r_p^*] \leq q-1+p_q^*.
\end{equation}
On combining \eqref{nogeenongelijkheid} and
\eqref{2u_2'}, we obtain $2(q-p^*_q+1)\leq q-1+p^*_q$, 
which on multiplying both sides by $p$ gives rise to
 $pq+3p\leq 3pp^*_q$. Since $2p/3<p-r_p^*\leq q_p^*$, 
it now follows on invoking \lref{pqinv} that
 $pq+3p\le 3pp_q^*=3(pq+1-qq_p^*)<pq+3$, which is
impossible.

The proof in the case that \eqref{q_p} holds for $v_0$ is analogous
and now the congruences above with a bullet point get replaced, respectively, by
\begin{itemize}
    \item $([v_0]p+v_0q)r+p+q \equiv i+1+k_1 \,(\mmod{pq})$ and
	\item $([v_0-r_p^*]p+v_0q)r +p+ \overline{q_p} \equiv i+1+k_1 \,(\mmod{pq})$ with $\overline{q_p}\le k_1\le p-1$,
	\item $([2v_0-v]p+(2v_0+r_p^*-v)q)r+p \equiv i+1+k_2 \,(\mmod{pq})$ with $0\le k_2\le p-1$,

	\item $([v-r_p^*]p+(v-r_p^*)q)r+p+q \equiv i+1+k_3\,(\mmod{pq})$ with $0\leq k_3\leq\overline{q_p}-1$,
    \item $([v-r_p^*]p+vq)r+\overline{q_p} \equiv i+1+k_4 \,(\mmod{pq})$ with $0\leq k_4\leq\overline{q_p}-1$.
\end{itemize}
 Subtracting the latter congruence from
 the previous, we 
 conclude that $q$ divides $k_3-k_4-p+\overline{q_p}$. Since $k_3-k_4-p+\overline{q_p}\leq\overline{q_p}-1-p+\overline{q_p}<2p-p<q$ and $k_3-k_4-p+\overline{q_p}\geq -\overline{q_p}+1-p+\overline{q_p}>-q$,
 we infer that $k_3-k_4-p+\overline{q_p}=0$. As $k_4=k_3-p+\overline{q_p}\in [0,\overline{q_p}-1]$, it follows that $k_3\in [p-\overline{q_p},\overline{q_p}-1]$.
Exactly as above, we conclude that $p$ divides $k_3+k_2-2k_1+\overline{q_p}$. Since also \[-p<p-\overline{q_p}-2(p-1)+\overline{q_p}\leq k_3+k_2-2k_1+\overline{q_p}\leq\overline{q_p}-1+p-1-2\overline{q_p}+\overline{q_p}<p,\] we obtain $k_3+k_2-2k_1+\overline{q_p}=0$. As before, this implies
\begin{eqnarray*}
 &  & ([v-r_p^*]p+(v-r_p^*)q)r+p+q+([2v_0-v]p+(2v_0+r_p^*-v)q)r+p \\ 
& \equiv & i+1+k_3+i+1+k_2 \equiv i+1+k_1  -\overline{q_p} + i+1+k_1 \\ 
& \equiv & ([v_0-r_p^*]p+v_0q)r+p+\overline{q_p}-\overline{q_p}+([v_0]p+v_0q)r+p+q \,(\mmod{pq}), 
\end{eqnarray*}
and thus $q$ divides  $[v-r_p^*]+[2v_0-v]-[v_0]-[v_0-r_p^*]$. We conclude in exactly the same way as above.
\end{proof}
\end{lemma}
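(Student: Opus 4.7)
My plan is to argue by contradiction, assuming $g(v)=2v_0+r_p^*-v\in S\cup P^+$, and to extract from this a divisibility relation modulo $q$ that collapses to a numerical contradiction. First observe that $g(v)\le v_0<q_p^*$ forces $g(v)$ to be low, so the assumption $g(v)\in S\cup P^+$ gives $g(v)\in C_{1,1}$ and in particular $[2v_0-v]\le p_q^*-1$ and $[g(v)-r_p^*]\ge j/p$-condition. Since $v\ge q_p^*\ge r_p^*$, we have $f(v)=v-r_p^*$; and because $v\in S_0$, we know $f(v)\in P$. The argument splits according to which of the two cases of \lref{modchi} is satisfied by $v_0$: either \eqref{longp-q_p} (so $h(v_0)pr\equiv \overline{q_p}-q-p \pmod{pq}$) or \eqref{longq_p}. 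The two cases are parallel, so the plan is to treat the first one in detail and to carry the analogous estimates through in the second.

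Assume \eqref{longp-q_p} for $v_0$, giving an integer $k_1\in[0,\overline{q_p}-1]$ with
\[
([v_0]p+v_0q)r+p+q\equiv i+1+k_1\equiv([v_0-r_p^*]p+v_0q)r+\overline{q_p}\pmod{pq}.
\]
Record the analogous congruence for $g(v)$, defining $k_2\in[0,p-1]$. Next, \lref{s-shift-p} applied with $v'=v$ (using that $f(v)=v-r_p^*\in P$ and $v_1=v_0$) forces \eqref{longq_p} to hold for $v-r_p^*$, producing an index $k_3\in[\overline{q_p},p-1]$; while \lref{sameclass} applied to $v_0$ and $v$ forces \eqref{longq_p} to hold for $v$ as well, producing $k_4\in[\overline{q_p},p-1]$. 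Subtracting the last two congruences modulo $q$ pins $k_3=k_4-\overline{q_p}$, narrowing $k_3$ to $[\overline{q_p},p-1-\overline{q_p}]$. Reducing the $k_1,k_2,k_3$ congruences modulo $p$ (and using $r_p^*rq\equiv q\pmod{pq}$ together with $v_0qr+q\equiv i+1+k_1\pmod p$ twice) yields $p\mid k_3+k_2-2k_1+\overline{q_p}$; a straightforward size estimate then pins this integer down to exactly $p$.

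Substituting back into the mod-$pq$ congruences, the $k_1,k_2,k_3$ relations combine to show that $q$ divides
\[
[v-r_p^*]+[2v_0-v]-[v_0]-[v_0-r_p^*].
\]
Using $v_0\in C_{1,1}\cap C_{2,1}$, $v\in C_{2,2}$, and $g(v)\in C_{1,1}$ together with \lref{lowineq} and \lref{highineq}, this quantity is strictly positive, hence $\ge q$; combined with $[2v_0-v]\le p_q^*-1$ this gives $2[v-r_p^*]\ge 2(q-p_q^*+1)$. On the other hand, $v\in S_0$ means $f(v)\notin N$, so \lref{s-shift-p.c} applies to $v$ and yields $2[v-r_p^*]=[v]+[v-2r_p^*]\le q-1+p_q^*$. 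Combining the two estimates gives $pq+3p\le 3pp_q^*$, and invoking \lref{pqinv} together with $q_p^*>2p/3$ (from \eqref{rp3}) this sharpens to $pq+3p<pq+3$, a contradiction.

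For the second case (\eqref{longq_p} for $v_0$), each of the five key congruences shifts: the ranges of the $k_i$'s are dualized, but one checks that the analogous sum $k_3+k_2-2k_1+\overline{q_p}$ is instead pinned to $0$, and the identical divisibility-mod-$q$ relation reappears, concluding by the same numerical contradiction. I expect the main obstacle to be purely bookkeeping: verifying that the intervals for $k_1,k_2,k_3,k_4$ are tight enough in each case to force the equalities $k_3+k_2-2k_1+\overline{q_p}\in\{0,p\}$, and correctly setting up the two applications of \lref{s-shift-p.c} and \lref{s-shift-p} that feed $h(v-r_p^*)$ and $h(v)$ into the chain.
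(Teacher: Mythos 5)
Your proposal follows essentially the same route as the paper's proof: contradiction from $g(v)\in S\cup P^+$, case split on whether \eqref{longp-q_p} or \eqref{longq_p} holds for $v_0$, introduction of $k_1,\dots,k_4$ via \lref{modchi}, pinning $k_3=k_4-\overline{q_p}$ and then $k_3+k_2-2k_1+\overline{q_p}\in\{0,p\}$, extracting $q\mid [v-r_p^*]+[2v_0-v]-[v_0]-[v_0-r_p^*]$, and closing via \lref{s-shift-p.c} and \lref{pqinv} to obtain $pq+3p<pq+3$. The details you flag as bookkeeping (the interval computations for the $k_i$'s, the positivity of the divisible quantity via $[v_0]<[2v_0-v]$ and $[v_0-r_p^*]<p_q^*\le[v-r_p^*]$) are exactly what the paper carries out, so the plan is sound and matches the paper's argument.
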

We will now finish the proof by distinguishing three cases depending on the value of $v_0$.\\
\\
\textbf{Case 1:} $0 \leq v_0 \leq r_p^*-1$. \\
In this case, all low special integers lie in the interval $[0,r_p^*-1]$. Additionally, all high special integers lie in the interval $[q_p^*,p-1]\subseteq[p-r_p^*,p-1]$. 
By \lref{shiftclass}, 
for every $v\in[0,r_p^*-1]$ 
at most one of $v$ and $v-r_p^*+p$ is special.
Thus, 
\begin{equation*}
|S| \leq r_p^* < \frac{p}{3}.
\end{equation*}
\\
\textbf{Case 2:} $r_p^* \leq v_0 \leq q_p^*-r_p^*-1$. \\
We structure this lengthy case by formulating four claims
in the proof. 
\begin{claim}
If $v\in S$ and $v<v_0$, then $g(v)\in [q_p^*,p-1]\setminus S$ or $\{2v_0-v-r_p^*,2v_0-v,2v_0-v+r_p^*\}$ shares an element with $N$. 
\begin{proof}
By \cref{bound}, we know that $v\in [v_0-p+q_p^*+1,v_0]$ for any low special $v$. We will distinguish two ranges of $v$.
\par If $v \in [v_0-p+q_p^*+1,2v_0+r_p^*-q_p^*]$, then $g(v)=2v_0-v+r_p^*\in [q_p^*,p-1]$ (where
we use the assumption $v_0\le q_p^*-r_p^*-1$). Since $2v_0+r_p^*-q_p^*<v_0$ by the conditions of Case $2$, it follows from \lref{2v0+rp*-v} that $2v_0-v+r_p^* \notin S_0$. 
Applying \lref{2v0+rp*-v} to $2v_0+r_p^*-v$, we conclude that
$2v_0+r_p^*-v\not\in S_0$.
Thus, we have $g(v)=2v_0-v+r_p^*\in [q_p^*,p-1]\setminus S$ or $f(2v_0-v+r_p^*)=2v_0-v\in N$ (cf.\,with the sentence just
below \dref{S0}).
\par If $v \in [2v_0+r_p^*-q_p^*+1,v_0-1]$, we know that $2v_0-v \notin S$ because $v_0+1 \leq 2v_0-v \leq q_p^*-r_p^*-1<q_p^*$, so $2v_0-v$ is larger than all low special integers and smaller than all high integers. 
Furthermore, $2v_0-v\ge q_p^*-r_p^*-1\ge p-1-2r_p^*\ge r_p^*$.
\par If $2v_0-v\in N$ there is nothing to prove and, as $2v_0-v\notin S$,  it remains to consider the case where $2v_0-v\in P$.
    \begin{itemize}
        \item If $2v_0-v \in P^+$, then $[2v_0-v]\ge p_q^*$ by \dref{largedef} ($P^+$). Note that $2v_0-v+r_p^*$ is low.  We have
        $2v_0-v+r_p^*\notin P^+$ (as otherwise 
        $2v_0-v+r_p^*\in C_{1,1}$ by \dref{largedef} ($P^+$) 
        and hence $[2v_0-v]<p_q^*$). Since $2v_0-v \in P^+$ is low, we conclude by \lref{rp*-} with $v_1=2v_0-v+r_p^*$ that $v_1\notin P^- \cup S$.
        All in all, $2v_0-v+r_p^* \in N$. 
        \item If $2v_0-v \in P^-$, then $[2v_0-v-r_p^*]\ge p_q^*$ by \dref{largedef} ($P^-$). Note that $2v_0-v-r_p^*$ is low. 
        We have $2v_0-v-r_p^* \notin P^-$ (as otherwise $2v_0-v-r_p^*\in C_{2,1}$ by \dref{largedef} ($P^-$) and hence
        $[2v_0-v-r_p^*]<p_q^*$). Since $2v_0-v \in P^-$ is low, we conclude by \lref{rp*+} with $v_1=2v_0-v-r_p^*$ that $v_1\notin P^+ \cup S$ (note that $0\le v_1\le 2v_0-v-p+q_p^*$). All in all, $2v_0-v-r_p^* \in N$.\qedhere
    \end{itemize}
\end{proof}
\end{claim}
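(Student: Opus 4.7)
The plan is to split the range of low special integers $v < v_0$ at the threshold $v = 2v_0 + r_p^* - q_p^*$. By \cref{bound}, every low special integer lies in $[v_0 - p + q_p^* + 1,\,v_0]$, and at the chosen breakpoint $g(v) = 2v_0 + r_p^* - v$ equals $q_p^*$. In the lower sub-range the value $g(v)$ will be a high integer, so that \lref{2v0+rp*-v} can be brought to bear; in the upper sub-range $2v_0 - v$ will lie strictly between $v_0$ and $q_p^*$, forcing it out of $S$. Both sub-intervals are non-empty thanks to the Case 2 hypothesis $r_p^* \le v_0 \le q_p^* - r_p^* - 1$.

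For $v \in [v_0 - p + q_p^* + 1,\,2v_0 + r_p^* - q_p^*]$ I would apply \lref{2v0+rp*-v} with $g(v)$ in the role of the lemma's variable: the hypotheses $g(v) \ge q_p^*$ and $0 \le g(g(v)) = v \le v_0$ are met, and the conclusion would give $v = g(g(v)) \notin S \cup P^+$, contradicting $v \in S$. Hence $g(v) \notin S_0$. If $g(v) \notin S$, then $g(v) \in [q_p^*,\,p-1] \setminus S$ and the first alternative of the claim holds. Otherwise $g(v) \in S \setminus S_0$, and by the remark immediately after \dref{S0} we get $f(g(v)) \in N$; since $g(v) \ge q_p^* \ge r_p^*$, this element equals $g(v) - r_p^* = 2v_0 - v$, which sits in the required three-element set.

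For $v \in [2v_0 + r_p^* - q_p^* + 1,\,v_0 - 1]$, set $w = 2v_0 - v$, so $v_0 + 1 \le w \le q_p^* - r_p^* - 1$. Then $w$ is low but strictly greater than $v_0$, hence $w \notin S$. If $w \in N$ the claim is immediate, so I would reduce to $w \in P^+$ or $w \in P^-$. In the first case, \dref{largedef} forces $w + r_p^* \notin P^+$ (else the two $C_{1,1}$ conditions for $w$ and $w + r_p^*$ would clash on $[w]$), and \lref{rp*-} applied with $v_1 = w + r_p^* \le q_p^* - 1$ rules out $S \cup P^-$, giving $w + r_p^* \in N$. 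Symmetrically, in the second case \lref{rp*+} applied with $v_1 = w - r_p^*$ (which is non-negative since $w \ge v_0 + 1 \ge r_p^* + 1$, and satisfies $v_1 \le w - p + q_p^*$ thanks to $p \le q_p^* + r_p^*$ from \eqref{inequ}) yields $w - r_p^* \in N$.

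The main obstacle is the bookkeeping around the ranges: one has to simultaneously verify that every shifted integer stays within $[0,\,q_p^* - 1]$ to count as low, that $g(v)$ lies in $[q_p^*,\,p-1]$ in the lower sub-range, and that the precise numerical hypotheses of \lref{2v0+rp*-v}, \lref{rp*+} and \lref{rp*-} are simultaneously met. The Case 2 bounds $r_p^* \le v_0 \le q_p^* - r_p^* - 1$, combined with the chain \eqref{inequ}, are exactly what makes both sub-range arguments close.
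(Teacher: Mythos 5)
Your proof matches the paper's argument step for step: the same split of the range $[v_0-p+q_p^*+1,v_0-1]$ at the breakpoint $2v_0+r_p^*-q_p^*$, the same application of \lref{2v0+rp*-v} to $g(v)$ (deriving $g(v)\notin S_0$ from the contradiction $v\notin S\cup P^+$, then falling back on $f(g(v))=2v_0-v\in N$ via the remark after \dref{S0}), and the same treatment of the upper sub-range via $w=2v_0-v\notin S$ followed by \lref{rp*-} respectively \lref{rp*+} combined with the definitional clash in $C_{1,1}$/$C_{2,1}$. The only compression is that in the $P^-$ case the exclusion $2v_0-v-r_p^*\notin P^-$ is left to the word ``symmetrically''; spelling out that $[2v_0-v-r_p^*]\ge p_q^*$ contradicts membership in $C_{2,1}$ would make it identical to the paper's proof.
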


The numbers $2v_0-v-r_p^*$, $2v_0-v$ and $2v_0-v+r_p^*$ with $v\in [v_0-p+q_p^*+1,v_0]$ are 
all different. 
This is a consequence of this interval
having length $p-q_p^*-1<r_p^*$ and the distance between any of the three numbers being at least $r_p^*$. Hence, we have found an injection of $[v_0-p+q_p^*+1,v_0-1]\cap S$ (every possible low special integer except $v_0$) into 
$N\cup [q_p^*,p-1]$. Hence we infer that 
\begin{equation}
\label{chain2}
\frac p3<|S|- |N|\leq 1+(p-1-q_p^*+1)\leq 1+r_p^*<\frac p3+1.
\end{equation}
Since the interval $[\frac p3,\frac p3+1]$ contains exactly one integer, we conclude from these
inequalities that $r_p^*=p-q_p^*=\left\lfloor\frac p3\right\rfloor$.

If $p \equiv 2 \,(\mmod{3})$, we can strengthen the inequalities from \lref{boundsp+} and \lref{boundsp-} to
\begin{equation*}
|S|+|P^+|\geq\frac{2}{3}(p+1)
\text{~and~}
|S|+|P^-|\geq\frac{2}{3}(p+1),
\end{equation*}
since $\frac 23(p+1)$ is the 
smallest integer $\ge\frac 23 p$. Now the conclusion of Lemma \ref{S-N} can be sharpened to
 $|S|-|N| \geq \frac{p+4}3$, which cannot be true by the previous paragraph by \eqref{chain2}. 
Therefore the only possible case left is $p \equiv 1 \,(\mmod{3})$, whence 
\begin{equation}
\label{specific}    
p-q_p^*=r_p^* =v_0 =\frac{p-1}3
\end{equation}
by the 
conditions of Case 2.

\begin{claim}\label{fgpairs}
For every $v\in [0,v_0-1]$, we have either $v\in S$ or $f(v)\in S$. Similarly, either $v\in S$ or $g(v)\in S$.
\begin{proof}
By \lref{shiftclass} and since
$f$ is an involution at most one of $v$ 
and $f(v)$ is a special integer.
Note that $f$ 
maps $[0,v_0-1]$ into $[q_p^*,p-1]$. Since we already 
deduced that $v_0=r_p^*=(p-1)/3$, the function $f$ is even a bijection. This immediately shows $|S|\leq r_p^*+1$. Since $|S|>\frac p3$, this inequality must be an equality, so all pairs $(v,f(v))$ must contain a special integer. This means we also need to have $|N|=0$ and hence $S=S_0$.

Now $g$ is also a bijection between these two intervals and $v\in S(=S_0)$ implies $g(v)\notin S$ by \lref{2v0+rp*-v}. Hence, either $v$ or $g(v)$ is special.
\end{proof}
\end{claim}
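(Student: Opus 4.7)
My plan is to exploit the identities $p-q_p^*=r_p^*=v_0=(p-1)/3$ established in \eqref{specific} to turn both $f$ and $g$ into bijective involutions between $[0,v_0-1]$ and $[q_p^*,p-1]$, after which a tight counting argument on $|S|$ will force each pair to contain exactly one special integer.

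For the $f$-part, I would first note that for any $v\in[0,v_0-1]$ we have $v\le r_p^*-1$, so $f(v)=v+p-r_p^*=v+q_p^*\in[q_p^*,p-1]$ gives a bijection between the two intervals. By \lref{shiftclass}, each pair $(v,f(v))$ contains at most one special integer. By \cref{bound}, every low special integer lies in $[v_0-p+q_p^*+1,v_0]=[1,v_0]$ and every high special integer lies in $[q_p^*,p-1]$, so $S\subseteq\{v_0\}\cup[0,v_0-1]\cup[q_p^*,p-1]$ and hence $|S|\le v_0+1$. Combining with \lref{S-N}, which yields $|S|\ge|S|-|N|>p/3=v_0+\tfrac13$ and therefore $|S|\ge v_0+1$, I would conclude $|S|=v_0+1$. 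Pigeonhole then forces every $f$-pair to contain exactly one special integer. Moreover $|N|<|S|-p/3=\tfrac23<1$, so $|N|=0$, and thus $v\in S$ implies $f(v)\in P$, giving $S=S_0$.

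For the $g$-part, $g(v)=2v_0+r_p^*-v=p-1-v$ likewise defines a bijective involution between $[0,v_0-1]$ and $[q_p^*,p-1]$. To show that each pair $(v,g(v))$ contains at most one special integer, I would suppose for contradiction that both $v\in[0,v_0-1]$ and $v'=g(v)\in[q_p^*,p-1]$ lie in $S=S_0$. Since $v'\ge q_p^*$ and $g(v')=v\in[0,v_0-1]\subseteq[0,v_0]$, \lref{2v0+rp*-v} applied to $v'$ would yield $v=g(v')\notin S\cup P^+$, contradicting $v\in S$. Combined with $|S|=v_0+1$ and the fact that $v_0$ is special but lies in no $g$-pair, every $g$-pair must then contain exactly one special integer, which completes the claim. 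The main subtlety will be the careful verification that \lref{2v0+rp*-v} has to be applied to $v'$ rather than to $v$ itself, since its hypothesis requires the input to lie in the high range $[q_p^*,p-1]$; the involution structure of $g$ is exactly what makes this legitimate.
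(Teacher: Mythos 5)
Your proposal is correct and follows essentially the same route as the paper: use $v_0=r_p^*=(p-1)/3$ to make $f$ and $g$ bijections between $[0,v_0-1]$ and $[q_p^*,p-1]$, bound $|S|\le v_0+1$ via Corollary \ref{bound} and Lemma \ref{shiftclass}, force equality (and $|N|=0$, $S=S_0$) from Lemma \ref{S-N}, and then exclude doubly-special $g$-pairs via Lemma \ref{2v0+rp*-v}. Your explicit remark that Lemma \ref{2v0+rp*-v} must be applied to the \emph{high} element of each $g$-pair is a point the paper glosses over, and is handled correctly.
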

\par We get $\overline{q_p} = 3$ from $q_p^*=\frac{2p+1}3$. Given any $0 \le v \le p-1$, 
let $k_v \in [0,p-1]$ be the unique integer satisfying 
$([v]p+vq)r+p+q \equiv i+1+k_v\,(\mmod{pq})$ given by \lref{modchi}. By considering 
$k_v$ modulo $p$, we infer that $k_v \neq k_{v'}$ if $v \neq v'$. Therefore, \eqref{p-q_p} is satisfied by exactly three  integers $v$ with $0 \le v \le p-1$ 
(as $\overline{q_p}=3$) and \eqref{q_p} is satisfied by exactly $p-\overline{q_p}=p-3$ integers $v$ with $0 \le v \le p-1$.
We will show that $v_0$ satisfies \eqref{p-q_p} and find three more $v$ for which
this is also the case. These four musketeers then will lead us to victory.
\begin{claim}\label{scupp-}
For every $v'\in [q_p^*,p-1]$, we have $v'\in S\cup P^-$.
\begin{proof}
Assume otherwise. Since $N$ is empty, it follows that $v'\in P^+$, so by \dref{largedef} ($P^+$), $v'\in C_{1,2}$ and $[v']\geq p_q^*$. Observe that $v'+r_p^*-p\leq v_0-1$ is low. Since $f(v'+r_p^*-p)=v'\in P$ is not special, by \clref{fgpairs}, we know $v'+r_p^*-p\in S$.  By definition, this means $v'+r_p^*-p\in C_{1,1}$ and thus $[(v'+r_p^*-p)-r_p^*]\leq p_q^*-1$, which contradicts $[v'-p]=[v']\geq p_q^*$. Hence, such a $v'$ cannot exist.
\end{proof}
\end{claim}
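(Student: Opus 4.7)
The plan is to argue by contradiction, using the two strong structural facts we have in the current branch of Case 2, namely that $N=\emptyset$ (so every $v$ lies in $S\cup P^+\cup P^-$) and that $p-q_p^*=r_p^*=v_0=(p-1)/3$. Together these make $f$ a bijection between the low interval $[0,v_0-1]$ and the high interval $[q_p^*,p-1]$, and Claim \ref{fgpairs} guarantees that at least one element of each $f$-pair is in $S$.

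So I assume, for a contradiction, that some $v'\in[q_p^*,p-1]$ lies outside $S\cup P^-$. Since $N$ is empty, this forces $v'\in P^+$. The definition of $P^+$ (\dref{largedef}) then puts $v'\in C_{1,2}$, and in particular gives $[v']\ge p_q^*$. This is the inequality I want to contradict.

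Next I would look at $v'':=v'+r_p^*-p$. From $q_p^*\le v'\le p-1$ and $q_p^*+r_p^*=p$, I get $0\le v''\le r_p^*-1=v_0-1$, so $v''$ is low. Because $v''\le r_p^*-1$, the second clause of \dref{f} applies and gives $f(v'')=v''-r_p^*+p=v'$. Since $v'\in P^+$ is not special, Claim \ref{fgpairs} applied to the $f$-pair $\{v'',v'\}$ forces $v''\in S$.

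Finally I exploit that $v''$ is a low special integer: by \dref{largedef} ($S$) it lies in $C_{1,1}\cap C_{2,1}$, and the $C_{1,1}$ condition gives $[v''-r_p^*]\le p_q^*-1$. But $v''-r_p^*\equiv v'-p\equiv v'\pmod{p}$, and by the observation that $[\cdot]$ depends only on the residue mod $p$, this reads $[v']\le p_q^*-1$, contradicting $[v']\ge p_q^*$ from the previous paragraph. The only real obstacle is bookkeeping: making sure the case of \dref{f} applied to $v''$ is the correct one (it is, precisely because $v_0=r_p^*$ in this sub-case) and invoking the mod-$p$ invariance of $[\cdot]$ to turn $[v''-r_p^*]$ into $[v']$. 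No new ideas beyond the results already established in Case 2 should be needed.
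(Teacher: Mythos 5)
Your argument is correct and follows the paper's proof essentially verbatim: assume $v'\in P^+$, consider $v''=v'+r_p^*-p\in[0,v_0-1]$, use \clref{fgpairs} to force $v''\in S$, and read off $[v''-r_p^*]=[v']\le p_q^*-1$ to contradict $[v']\ge p_q^*$. The extra bookkeeping you supply (the explicit bounds on $v''$ via $q_p^*+r_p^*=p$, the identification of which clause of \dref{f} applies, and the mod-$p$ invariance of $[\cdot]$) matches what the paper leaves implicit.
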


Using \clref{scupp-}, we can apply \lref{s-shift-p} to every $v\in [q_p^*,p-1]$. This implies $h_q(v)\ne h_q(v_0)$ for any $v\in [v_0+1,q_p^*-1]$. 
Hence, none of the integers in $[v_0+1,q_p^*-1]=[\frac{p+2}3,\frac{2p-2}3]$ 
satisfies $h_q(v) = h_q(v_0)$. Since $p\geq 13$ (as the prime $11\not\equiv 1\,(\mmod 3)$ is excluded from consideration), this interval 
contains $\frac{p-1}{3}>3$ integers. Since the number of $v$ 
with $h_q(v)=h_q(v_0)$ is either $3$ or $p-3$, 
it follows that there are exactly three integers $v$ with 
$h_q(v)=h_q(v_0)$. In particular, $v_0$ satisfies \eqref{p-q_p}.
\begin{claim}\label{2lowspecial}
There is a special integer $v_1$ with $0\leq v_1<v_0$.
\end{claim}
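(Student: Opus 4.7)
The plan is to argue by contradiction, assuming $v_0$ is the only low special integer. Under this assumption the remaining $|S|-1=v_0=r_p^*$ specials are high, and since high special integers live in the interval $[q_p^*,p-1]$ which has exactly $p-q_p^*=r_p^*$ elements, we deduce $[q_p^*,p-1]\subseteq S$ entirely.

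Next I pin down the location of the set $T=\{v\in[0,p-1]:h_q(v)=h_q(v_0)\}$, which the preceding text has shown has exactly three elements. That text already yields (using \lref{s-shift-p.c} applied to the high specials) that $T\cap[v_0+1,q_p^*-1]=\emptyset$. Under the contradictory assumption, $[q_p^*,p-1]$ consists entirely of high special integers, so \lref{chiclass} gives $T\cap[q_p^*,p-1]=\emptyset$ as well. Combining, $T\subseteq[0,v_0]$; since $v_0\in T$, the remaining two elements $a,b$ of $T$ must lie in $[0,v_0-1]$.

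The heart of the argument is \lref{s-shift-p.2}. Any $w\in[0,v_0-1]$ lies neither in $S$ (by the contradiction assumption combined with \cref{bound}, which in particular forbids $w=0$ from being low special) nor in $N$ (since $|N|=0$ in the current subcase, as established in the proof of \clref{fgpairs}), so $w\in P$. With $v'=w+p-r_p^*\in[q_p^*,p-1]\subseteq S$ the hypotheses of \lref{s-shift-p.2} hold, and its conclusion gives $h_q(w)\ne h_q(v_0)$; applied to $a$ and $b$ this contradicts $a,b\in T$. Hence a second low special integer must exist, which by \cref{bound} lies in $[1,v_0-1]$ and furnishes the desired $v_1$. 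The main obstacle is simply to recognize that \lref{s-shift-p.2} is the correct tool: it converts the forced inclusion $[q_p^*,p-1]\subseteq S$ into a constraint on low elements of $P$ via the shift $v'\mapsto v'+r_p^*-p$, exactly matching the structure we need.
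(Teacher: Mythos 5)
Your proof is correct and follows the paper's argument almost verbatim: contradiction assumption, forcing $[q_p^*,p-1]\subseteq S$ via \clref{fgpairs}, excluding $T\cap[q_p^*,p-1]$ by \lref{chiclass}, recalling that the preceding text excludes $[v_0+1,q_p^*-1]$, and then using \lref{s-shift-p.2} to exclude $[0,v_0-1]$. One small slip: the exclusion of $[v_0+1,q_p^*-1]$ in the preceding text comes from \lref{s-shift-p} (not \lref{s-shift-p.c}), but this does not affect the argument.
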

\begin{proof}
Assume otherwise, i.e. $v\notin S$ for every $v\in[0,v_0-1]$. By \clref{fgpairs}, this implies $v'\in S$ for every $v'\in[q_p^*,p-1]$.

Since the two values of $h_q(v)$ appear $p-3$, respectively, three times, there must be at least one $\tilde v\neq v_0$ with $h_q(\tilde v)=h_q(v_0)$.  By \lref{chiclass}, such a $\tilde v$ cannot be in the interval $[q_p^*,p-1]$, since all of those integers are special.

In addition, we can apply \lref{s-shift-p} to any $v'\in [q_p^*,p-1]$ (since $v'-r_p^*\in P$ follows from $|N|=0$). Then $h_q(v'-r_p^*)\ne h_q(v_0)$, so $\tilde v\notin[q_p^*-r_p^*,p-1-r_p^*]=[v_0+1,q_p^*-1]$.

We can also apply \lref{s-shift-p.2} to those $v'$ (since $v'+r_p^*-p\in P$ follows from the assumption that no integer in $[0,v_0-1]$ is special, as $v'\in S$
and so $f(v')=v'+r_p^*-p$ is not in $S$ and hence in $P$). Then $h_q(v'+r_p^*-p)\ne h_q(v_0)$, and so $\tilde v\notin [0,v_0-1]$.

Combining the above, we see that such a $\tilde v$ cannot exist. Therefore, a low special integer $v_1\neq v_0$ must exist. As $v_0$ is the largest special low integer, we have $v_1<v_0$.
\end{proof}
Let $v_1<v_0$ be a low special integer, which exists by \clref{2lowspecial}.
We get $f(v_1)=v_1-r_p^*+p \notin S$ (\lref{shiftclass}). By \lref{fgpairs}, $v_1-r_p^*+p\notin S$ implies $g(v_1-r_p^*+p)\in S$, so $g(v_1-r_p^*+p)=2v_0+2r_p^*-v_1-p=r_p^*-1-v_1\in S$.
\par Since $v_0$ satisfies \eqref{p-q_p} and each of 
$v_0,v_1$ and $r_p^*-1-v_1$ is both low
and special, they all satisfy \eqref{p-q_p} by \lref{sameclass}.
Using \eqref{specific} we see that $q_p^*\le v_1-r_p^*+p\le p-1$. As  
$v_1-r_p^*+p\notin S$ and $N$ is empty, we conclude that 
$v_1-r_p^*+p\in P.$ Recalling that $v_1\in S$, it now follows by
\lref{+shift} that also $v_1-r_p^*+p$ satisfies \eqref{p-q_p}.
We have $\max\{v_1,r_p^*-1-v_1\}<v_0<v_1-r_p^*+p$, 
and thus if two of these four integers are equal, then necessarily  $v_1=r_p^*-1-v_1$ and so $v_1=\frac{p-4}6$. However, $v_1$ is not an integer since $p-4$ is odd. 
Thus we have identified four integers in $I_p$
satisfying \eqref{p-q_p}, giving rise to a contradiction.\\

\noindent \textbf{Case 3:} $q_p^*-r_p^* \leq v_0 \leq q_p^*-1$. \\
Let $v$ be any high special integer.
If $v \notin S_0$, then $f(v) \in N$, so $v$ does not contribute to the difference $|S|-|N|$ and hence $|S|-|N|\leq |S_0|$. 
It remains to 
deal with those $v$ that are in $S_0$, which we will 
do by considering two $v$-ranges separately.\\ 
\indent If $q_p^* \leq v \leq v_0+r_p^*$, then $v_0-r_p^*+1 \leq q_p^*-r_p^*\leq v-r_p^* \leq v_0$. Moreover, $v\in S_0$ (by
assumption), so $f(v)\in P$ and by \lref{s-shift-p}, we even have $f(v)\in P^+$. Thus, we can restrict $f$ to a smaller domain, giving rise to a map $\tilde f:S_0\cap [q_p^*,v_0+r_p^*]\rightarrow P^+\cap [v_0-r_p^*+1,v_0]$.\\
\indent If $v_0+r_p^*+1 \leq v \leq p-1$, 
then $g(v)=2v_0+r_p^*-v \in [v_0-r_p^*+1,v_0]$ (since $2v_0+r_p^*-p+1\geq v_0+q_p^*-r_p^*+r_p^*-p+1\geq v_0-r_p^*+1$) and hence $2v_0+r_p^*-v \notin S\cup P^+$ by \lref{2v0+rp*-v}. Thus, we have $\tilde g:S_0\cap [v_0+r_p^*+1,p-1]\rightarrow [v_0-r_p^*+1,v_0]$ with $\operatorname{Im}(\tilde g)$ and $S\cup P^+$ being disjoint.
\par Observe that the domains of $\tilde f$ 
and $\tilde g$ cover $[q_p^*,p-1]\cap S_0$ and thus 
they cover every high integer in $S_0$. Also, the ranges 
of $\tilde f$ 
and $\tilde g$
are distinct, since $\operatorname{Im}(\tilde f)\subseteq P^+$, whereas $\tilde g(v)\notin P^+$.
All low integers in $S_0$ are contained in $[v_0-p+q_p^*+1,v_0]\subseteq [v_0-r_p^*+1,v_0]$
by \cref{bound}.
Hence, the map
\[h:S_0\to [v_0-r^*_p+1,v_0],\quad
v\mapsto\begin{cases}
v&\text{for $v\in [0,q_p^*-1]$};\\
\tilde f(v)&\text{for $v\in [q_p^*,v_0+r_p^*]$};\\
\tilde g(v)&\text{for $v\in [v_0+r_p^*+1,p-1]$},
\end{cases}\]
is injective,
and it follows that $|S|-|N|\leq |S_0|\leq r_p^*<\frac p3$.\\

Combining the three cases above we obtain 
$|S|-|N|<\frac{p}3$ for every possible $v_0$. 
By Lemma \ref{S-N} it then follows that 
$A(pqr)\le\frac 23 p$ and thus the corrected Sister Beiter cyclotomic coefficient conjecture is proven.
\subsection{On the restriction $p\ge 11$}
Note that we really used $p \ge 11$ in Case 2,
as for our 
argument the number of integers in $\left[\frac {p+2}3,\frac {2p-2}3\right]$ 
has to exceed $3$. This is not true for $p=7$ (and neither for $p=11$, but $11\not\equiv 1\,(\mmod{3})$). Nevertheless, it is possible to finish the proof with very similar arguments for $p=7$, {cf.\,Zhao and Zhang \cite{ZZ1}.
For $p\in \{3,5\}$ it immediately follows from \tref{Bzd} that
$M(p)\le 2p/3$.
\subsection{Establishing the weaker bound $M(p)\le (2p+1)/3$}
If one is satisfied with proving that $M(p)\le \frac{2p+1}{3}$, a 
shorter proof is possible. In Case 2, after establishing that 
$q_p^*=\frac{2p+1}{3}$, one then merely concludes using \tref{tabletheorem}
that $M(p;q)\le f(\frac{2p+1}3)=f(\frac{p-1}3)=\frac{2p+1}3$. Further,
there is no need to formulate and prove \lref{s-shift-p.2} and
\lref{+shift}.
\section{Improvement of some bounds of Bzd\k{e}ga}
\label{bartekbounds}
The ternary coefficient bounds of Bzd\k{e}ga given in 
\tref{Bzd} are quite
useful. Combination of our main result with his, then leads to the following
improvement.
\begin{theorem}
\label{Bzdmodern} 
Let $p<q<r$ be primes. Let
\begin{equation*}
\alpha = \min\{q_p^*,~p-q_p^*,~r_p^*,~p-r_p^*\}
\end{equation*}
and let $0<\beta\leq p-1$ the unique integer with $\alpha\beta qr \equiv 
1\,(\mmod{p})$. 
We have
\begin{equation*}
a_{pqr}(i) \leq \min\{2\alpha + \beta,~p-\beta,2p/3\}\text{~and~}-a_{pqr}(i) \leq \min\{p+2\alpha - \beta,~\beta,2p/3\}.
\end{equation*}
\end{theorem}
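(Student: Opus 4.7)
The plan is to observe that Theorem \ref{Bzdmodern} is an immediate consequence of combining Bzd\k{e}ga's bound (Theorem \ref{Bzd}) with the Corrected Beiter Conjecture (Theorem \ref{cbc}), which has just been established. No new machinery is needed; the role of Theorem \ref{cbc} is simply to contribute the extra $2p/3$ term inside each minimum.

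In more detail, I would first invoke Theorem \ref{Bzd} to obtain
\[
a_{pqr}(i)\leq\min\{2\alpha+\beta,\,p-\beta\}\qquad\text{and}\qquad -a_{pqr}(i)\leq\min\{p+2\alpha-\beta,\,\beta\}.
\]
Then I would note that, by definition of $M(p)$ and by Theorem \ref{cbc},
\[
|a_{pqr}(i)|\leq A(pqr)\leq M(p)\leq \tfrac{2}{3}p,
\]
so that both $a_{pqr}(i)\leq 2p/3$ and $-a_{pqr}(i)\leq 2p/3$. Intersecting the Bzd\k{e}ga bounds with this inequality (i.e.\ taking the minimum of the two) yields exactly the two displayed inequalities claimed in Theorem \ref{Bzdmodern}.

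There is essentially no obstacle here: all of the real work is already contained in Theorem \ref{cbc}, whose proof occupies the bulk of the paper. The only mild subtlety is that $2p/3$ is in general non-integral, but since $a_{pqr}(i)$ is an integer one may, if desired, replace $2p/3$ by $\lfloor 2p/3\rfloor$ without any loss; the statement as written is nevertheless valid as an inequality between real numbers. Thus the proof of Theorem \ref{Bzdmodern} reduces to a single sentence citing Theorem \ref{Bzd} and Theorem \ref{cbc}.
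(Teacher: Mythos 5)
Your proposal is correct and is exactly the paper's argument: the paper proves Theorem \ref{Bzdmodern} in one sentence by intersecting Bzd\k{e}ga's bounds (Theorem \ref{Bzd}) with the newly established bound $|a_{pqr}(i)|\le A(pqr)\le M(p)\le 2p/3$ from Theorem \ref{cbc}.
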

Recall the definition \eqref{auxiliary} of $m$ and $w$.
Using Bzd\k{e}ga's bounds the following upper bound for $M(p;q)$ can be derived.
\begin{theorem}[Gallot et al.\,\cite{GMW}, 2011]
\label{BB}
Let $p<q$ be primes with $q\equiv \beta\,(\mmod{p}).$
Then $M(p;q)\le M_{\beta}(p)\le w(\beta^*)$.
\end{theorem}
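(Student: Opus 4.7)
The first inequality $M(p;q)\le M_\beta(p)$ is immediate from Duda's identity $M_\beta(p)=\max\{M(p;q'):q'>p,\,q'\equiv\beta\,(\mmod p)\}$ recalled in the introduction. The task therefore reduces to proving $A(pqr)\le w(\beta^*)$ for every prime $r>q$. Note that $\beta^*=q_p^*$, since $q\equiv\beta\,(\mmod p)$ forces their inverses mod $p$ to agree, and by the symmetry $w(j)=w(p-j)$ built into \eqref{auxiliary} I may further assume $q_p^*\le(p-1)/2$.

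My plan is to apply Theorem~\ref{Bzd} directly. To avoid a notation clash I rename Bzd\k{e}ga's parameter $\beta$ as $\gamma$. The defining congruence $\alpha\gamma qr\equiv 1\,(\mmod p)$ combined with $qq_p^*\equiv rr_p^*\equiv 1\,(\mmod p)$ forces $\gamma$ to be determined by $\alpha$: namely $\alpha=q_p^*\Rightarrow\gamma=r_p^*$, $\alpha=p-q_p^*\Rightarrow\gamma=p-r_p^*$, $\alpha=r_p^*\Rightarrow\gamma=q_p^*$, and $\alpha=p-r_p^*\Rightarrow\gamma=p-q_p^*$. Under $q_p^*\le(p-1)/2$ the second alternative is ruled out, leaving three cases: Case~I ($\alpha=q_p^*$, forcing $q_p^*\le r_p^*\le p-q_p^*$), Case~II ($\alpha=r_p^*\le q_p^*$, forcing $r_p^*\le p/2$), and Case~III ($\alpha=p-r_p^*\le q_p^*$, forcing $r_p^*\ge p-q_p^*$). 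Case~III is converted to Case~II by the symmetry $r_p^*\leftrightarrow p-r_p^*$ inherent in the Bzd\k{e}ga bound, so only the first two require analysis.

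In each surviving case I would maximise
\[A(pqr)\le\max\!\Bigl\{\min\{2\alpha+\gamma,\,p-\gamma\},\,\min\{p+2\alpha-\gamma,\,\gamma\}\Bigr\}\]
over the admissible $r_p^*$. In Case~I (after the WLOG reduction to $r_p^*\le p/2$), the second clause simplifies to $r_p^*$ and the first to $\min\{2q_p^*+r_p^*,\,p-r_p^*\}$, whose interior optimum is at $r_p^*=(p-2q_p^*)/2$. This interior point lies in $[q_p^*,p/2]$ precisely when $q_p^*\le p/4$, giving the bound $\lfloor(p+2q_p^*)/2\rfloor=(p-1)/2+q_p^*$; otherwise monotonicity forces the worst case to the endpoint $r_p^*=q_p^*$, giving $p-q_p^*$. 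In Case~II the upper bound on $-a$ collapses to $q_p^*$, while the upper bound on $a$ is $\min\{2r_p^*+q_p^*,\,p-q_p^*\}$; using $r_p^*\le q_p^*$, this is bounded by $3q_p^*\le(p-1)/2+q_p^*$ whenever $q_p^*\le(p-1)/4$, and is trivially bounded by $p-q_p^*$ always. Putting the bounds together yields $A(pqr)\le w(q_p^*)=w(\beta^*)$ in each case.

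The main obstacle is the bookkeeping across two cross-cutting splittings: which of the four candidates realises $\alpha$, and whether $q_p^*$ lies below or above the threshold $p/4$ at which the two branches of $w$ exchange roles. The underlying mechanism is transparent, though: the interior optimum of the Bzd\k{e}ga bound delivers $(p-1)/2+q_p^*$ exactly when $q_p^*\le p/4$, which is precisely the regime in which $w(q_p^*)=(p-1)/2+q_p^*$; outside that regime the admissibility constraints push the worst case to an endpoint where the bound drops to $p-q_p^*=w(q_p^*)$. No new ideas beyond Theorem~\ref{Bzd} are required.
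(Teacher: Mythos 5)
The paper does not give its own proof of Theorem~\ref{BB}: it quotes the result from Gallot et al.\,\cite{GMW} and only remarks that it can be derived from Bzd\k{e}ga's bound (Theorem~\ref{Bzd}). Your proposal supplies exactly that derivation, and it is correct: the identification $\beta^*=q_p^*$, the determination of Bzd\k{e}ga's second parameter $\gamma$ from each of the four candidates for $\alpha$, and the maximisation of $\max\{\min\{2\alpha+\gamma,\,p-\gamma\},\,\min\{p+2\alpha-\gamma,\,\gamma\}\}$ over the admissible range of $r_p^*$ all check out and land precisely on the two branches of $w(\beta^*)$, with the threshold $q_p^*\lessgtr p/4$ appearing exactly where it should. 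The only steps worth making explicit in a polished write-up are the two ``symmetry'' reductions you invoke (to $q_p^*\le(p-1)/2$, and from Case~III to Case~II): these are not literally symmetries of the statement of Theorem~\ref{Bzd}, but under the substitution $r_p^*\mapsto p-r_p^*$ (or $q_p^*\mapsto p-q_p^*$) the two min-expressions bounding $a_{pqr}(i)$ and $-a_{pqr}(i)$ simply trade places, so their maximum is unchanged; alternatively one may invoke Corollary~\ref{rp-rswap} to arrange $r_p^*\le(p-1)/2$ outright. With those remarks spelled out, the argument is complete and is the one the paper has in mind.
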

This in combination with our main result leads to the following
sharpening.
\begin{theorem}
\label{BBimproved}
Let $p<q$ be primes with $q\equiv \beta\,(\mmod{p}).$
Then $M(p;q)\le M_{\beta}(p)\le m(\beta^*)$.
\end{theorem}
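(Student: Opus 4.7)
The plan is to combine Theorem \ref{BB} with the Corrected Beiter Conjecture (Theorem \ref{cbc}) just established. Theorem \ref{BB} already yields
\[
M(p;q)\le M_\beta(p)\le w(\beta^*),
\]
so the only thing left to prove is the uniform bound $M_\beta(p)\le \lfloor 2p/3\rfloor$. Once both inequalities are in hand, their minimum is exactly $m(\beta^*)$, by the definition \eqref{auxiliary} of the auxiliary function $m$.

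For the extra bound I would argue as follows. By Theorem \ref{cbc}, every ternary height satisfies $A(pqr)\le 2p/3$, and since $A(pqr)$ is a nonnegative integer this sharpens to $A(pqr)\le \lfloor 2p/3\rfloor$. Taking the maximum over primes $r>q$ then gives $M(p;q)\le \lfloor 2p/3\rfloor$, and invoking Duda's characterisation
\[
M_\beta(p)=\max\{M(p;q):q>p,\ q\equiv\beta\,(\mmod{p})\}
\]
recorded just after \eqref{dominik}, we deduce $M_\beta(p)\le \lfloor 2p/3\rfloor$. Intersecting with Theorem \ref{BB} then yields
\[
M(p;q)\le M_\beta(p)\le \min\{w(\beta^*),\,\lfloor 2p/3\rfloor\}=m(\beta^*),
\]
which is the desired inequality.

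I do not expect any real obstacle here: the hard work is entirely contained in the proof of Theorem \ref{cbc}. In fact the somewhat peculiar-looking definition \eqref{auxiliary} of $m$ seems to have been tailored precisely so that the combination of the classical Bzd\k{e}ga-type bound $w(\beta^*)$ with the Corrected Beiter truncation $\lfloor 2p/3\rfloor$ is recorded in a single compact expression, making this theorem essentially a one-line corollary of \tref{cbc} and \tref{BB}.
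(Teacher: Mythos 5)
Your proof is correct and follows exactly the route the paper intends: the paper gives no explicit argument for Theorem \ref{BBimproved}, saying only that it follows by combining Theorem \ref{BB} with Theorem \ref{cbc}, which is precisely what you do, together with the observation that $M_\beta(p)$ is an integer so the bound $2p/3$ sharpens to $\lfloor 2p/3\rfloor$. (One could equally pass through $M_\beta(p)\le M(p)$ via \eqref{Mbetaset} instead of Duda's characterisation, but this is an inessential variation.)
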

Since $\#\{1\le j\le p-1:m(j)<w(j)\}$ asymptotically grows as
$p/6$, \tref{BBimproved} is a true improvement of \tref{BB}.
\section{The proof of Theorem \ref{tabletheorem}}
Let $v$ be a non-zero entry in the $p$-row and $\beta$-column with 
$\beta\le \frac{p-1}{2}$ in Table {\rm 2}. 
By Theorem \ref{BBimproved}  we have $M_{\beta}(p)\le m(\beta^*)$. It remains to
establish the lower bound $M_{\beta}(p)\ge v$.
Since $M_{\beta}(p)\ge M(p;q)$ for 
$q\equiv \beta\,(\mmod{p})$, it is enough for the non-boldface cases to find
one example with $M(p;q)=v$ with 
$q\equiv\beta\,(\mmod{p})$. This also holds for the boldface cases where 
$v=M(p)$. 
We will not give explicit examples, but note that the reader can work 
some out from Table 1.
However, more satisfactory than finding one example, is to find a construction which yields  $M(p;q)=v$ with 
$q\equiv \beta\,(\mmod{p})$ for all primes $q$ large enough. 
Usually the results we quote below rest on constructions given in 
the indicated references, e.g., in \cite{GM} one finds a construction for $M_5(13)\ge 8$. In the preprint of Gallot et al.\,\cite{GMWpre} more details of the construction are in general supplied than in the published version \cite{GMW}.
\par By  \cite[Thm.\,27]{GMW} we have $M_1(p)=M_2(p)=\frac{p+1}{2}$, and so we may assume that $\beta\ge 3$.
\par We finish the proof by discussing the six relevant primes individually.\\ 
$p=3$. If $q > 3$ is a prime, then $M(3; q) = 2$ \cite[Thm.\,17]{GMW}.\\
$p=5$. If $q > 3$ is a prime, then $M(5; q) = 3$ \cite[Thm.\,28]{GMW}.\\
$p=7$. If $q >13$ is a prime,
then $M(7; q) = 4$ \cite[Thm.\,32]{GMW}.\\
$p=11$. The result for $p=11$ follows from \cite[Thm.\,36]{GMW} (which assumes
the Corrected Sister Beiter conjecture to be true).\\
$p=13$. Theorem 4 of 
\cite{GM} (together with the corresponding Table 1 in that paper) shows that $M_{5}(13)=8.$\\
$p=19$. Theorem 4 of 
\cite{GM} (together with the corresponding Table 1 in that paper) shows that $M_{8}(19)\ge 11.$ Lemma 38 of \cite{GMW} together with $M(19)\le 12$ shows that
$M_{4}(19)=12$ (and hence $M(19)=12$).

\subsection*{Acknowledgment}
Work on this paper was started during an internship of the first and third author and finished during an internship of the fourth and fifth author at the  Max Planck Institute for Mathematics (MPIM) in Bonn with the second author in September 2015, respectively September 2022. 
The first and third author made a good start with clarifying the argument
of Zhao and Zhang, the fourth and fifth author finished this process. The second author polished the write-up, wrote
the introduction, and added the material on applications.
\par The second author
thanks the administration and Christian Blohmann for their support in organizing the internships.
The remaining authors like to thank MPIM for hosting them and the hospitality. Thanks are also due for Jia Zhao for supporting this new version of his purported proof with Xianke Zhang. 
Yves Gallot and Bin Zhang kindly helped with the computations in Table 1.
Table 2 is based on a lot of computational work by Robert Wilms and, especially,
Yves Gallot. Alessandro Languasco helped with some
LaTeX issues we had. Gennady Bachman pointed out the relevance of \cite{BM}.

\end{document}